\theoremstyle{definition}
\newtheorem{prop}{Proposition}[section]
\newtheorem{lem}[prop]{Lemma}
\newtheorem{thm}[prop]{Theorem}
\newtheorem{constr}[prop]{Construction}
\newtheorem{cor}[prop]{Corollary}
\newtheorem{defn}[prop]{Definition}
\newtheorem{rem}[prop]{Remark}
\newtheorem{expl}[prop]{Example}
\newtheorem{notation}[prop]{Notation}
\newcommand{\op}{\text{op}}
\newcommand{\ev}{\mathrm{ev}}
\newcommand{\coev}{\mathrm{coev}}
\newcommand\isoto{\stackrel{\sim}{\smash{\longrightarrow}\rule{0pt}{0.4ex}}}
\DeclareFontFamily{U}{dmjhira}{}
\DeclareFontShape{U}{dmjhira}{m}{n}{ <-> dmjhira }{}
\DeclareRobustCommand{\yo}{\text{\usefont{U}{dmjhira}{m}{n}\symbol{"48}}}
\title{Quartic BV structures in supercategories and modified necklace Lie bialgebras}
\author{Nikolai Perry{\footnote{Email: \texttt{Nikolai.Perry@unige.ch}}}\\\textit{Section de Mathématiques, Université de Genève, Switzerland}}
\date{\today}
\begin{document}

\maketitle

\begin{abstract}
    We introduce a modified version of the necklace Lie bialgebra associated to a quiver, in which the bracket and cobracket insert (rather than remove) pairs of arrows in involution. This structure is then related to canonical quartic Poisson/Batalin-Vilkovisky structures on suitable representation varieties of the quiver. Constructions on the representation side take place in symmetric monoidal $\Pi$-categories, which prompts a discussion of graded differential operators on commutative monoids in any such category. The generality of the categorical approach allows us to fully recover necklace structures, showing how the modified and classical necklace operations are related via dualisability.
\end{abstract}

\tableofcontents

\section{Introduction}
The free vector space $\mathcal{N}$ on the set of cyclic paths in a double quiver carries a canonical involutive Lie bialgebra (IBL) structure in which the bracket $\mathrm{br}$ and cobracket $\delta$ remove pairs of arrows in involution \cite{ginzburg2001,bocklandt2002,schedler2005hopf}. Pictorially,
\begin{equation*}
    \vcenter{\hbox{\includegraphics[scale=1]{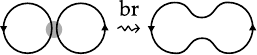}}} \quad \text{and} \quad \vcenter{\hbox{\includegraphics[scale=1]{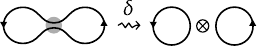}}}\;.
\end{equation*}
This is the \textit{necklace Lie bialgebra}, an infinitesimal analogue of the Goldman-Turaev Lie bialgebra of a surface \cite{goldman1986invariant,turaev1991skein,chas2004combinatorial}, where cyclic paths play the role of loop homotopy classes. The quiver and surface stories are related by so-called \textit{expansions} --- see \cite[\S1.2]{kawazumi2014logarithms} or \cite[Thm.\ 6.17]{alekseev2018goldman} and references therein.

One reason for interest in the Goldman-Turaev Lie bialgebra is its relation to character varieties. Its underlying Lie algebra, itself regarded as the origin of string topology \cite{chas1999string}, is related to the Atiyah-Bott Poisson structure on the $GL_N$-character variety of the surface via the Goldman map \cite{goldman1986invariant}, while its full IBL structure is related to a Batalin-Vilkovisky (BV) structure on the $Q_N$-character supervariety (where $Q_N$ is the queer Lie supergroup) through an odd analogue of this map \cite{alekseev2024batalin}. Likewise, the necklace Lie bialgebra is related to representation varieties of the double quiver (which can be viewed as infinitesimal analogues of character varieties): the Lie bracket $\mathrm{br}$ is witnessed through (constant-coefficient) Poisson structures on ordinary representation varieties \cite{ginzburg2001,bocklandt2002}, while the full IBL structure $(\mathrm{br},\delta)$ --- which can be encoded as a BV operator on $\bigwedge \mathcal{N}$ --- is witnessed through (constant-coefficient) BV structures on appropriate representation (super)varieties \cite{barannikov2014matrix,perry2024graded}.

In this paper we introduce a seemingly-overlooked variant of the necklace Lie bialgebra. Surprisingly, by tweaking the necklace operations only slightly, to operations $(\mathrm{br}^+,\delta^+)$ that \textit{insert} pairs of arrows in involution, the delicate IBL (and hence BV) structure is maintained. We call this the \textit{augmented necklace Lie bialgebra} and, for homogeneity, denote the classical operations $(\mathrm{br},\delta)$ by $(\mathrm{br}^-,\delta^-)$. Pictorially, the augmented operations are
\begin{equation*}
    \vcenter{\hbox{\includegraphics[scale=1]{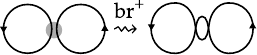}}} \quad \text{and} \quad \vcenter{\hbox{\includegraphics[scale=1]{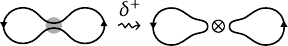}}}\;.
\end{equation*}
To the best of our knowledge, this is the only modification of this type that maintains the IBL structure, and the rest of this paper was initially motivated by a desire to understand where this structure comes from. On the one hand, the bracket $\mathrm{br}^+$ can be lifted to a Poisson double bracket on the path algebra of the double quiver, one that is \textit{quartic} in terms of generators of the path algebra \eqref{eqn:double-bracket}. While we believe this double bracket to be interesting in its own right, `double' theoretic aspects are not the focus of this paper. On the other hand, the Lie algebra $(\mathcal{N},\mathrm{br}^+)$ and IBL algebra $(\mathcal{N},\mathrm{br}^+,\delta^+)$ are witnessed as \textit{quartic}-coefficient Poisson and BV structures on representation varieties, respectively, and this is what we investigate.

We work in a more-general context on the representation side, in which `representation varieties' and their `algebras of functions' exist internal to any suitable linear symmetric monoidal category, as opposed to just the category of (super) vector spaces. Let us sketch the utility of this. Given a `representation variety' $h$ in such a category $\mathcal{C}$, the `true' representation variety is really $\mathcal{C}(1,h)$, its space of $1$-points. The `algebra of functions' on $h$ is naively modelled by $Sh^\vee$, the free commutative monoid on its dual. However, its space of $1$-points $\mathcal{C}(1,Sh^\vee)$ looks nothing like the algebra of functions on $\mathcal{C}(1,h)$ in general: \textit{it can be far richer}. This is, in essence, what lets us fully recover necklace structures. 

For us, the correct categorical context for quiver representation varieties is that of \textit{symmetric monoidal $\Pi$-categories} in the sense of \cite{brundan2017monoidal}, linear symmetric monoidal categories equipped with `parity' structure that allows one to speak of even and odd maps in a workable way --- the tautological example being the category of super vector spaces. This necessitates a notion of (graded) differential operators on commutative monoids in any symmetric monoidal $\Pi$-category, a generalisation of those on commutative (super)algebras in the sense of Koszul \cite{koszul1985crochet}. While this theory works essentially as expected, a large part of our preliminaries section is devoted to it.

Introduced for single-vertex quivers in \cite{barannikov2014matrix}, and then later for arbitrary quivers in \cite{perry2024graded} (the `$p=1$' case in that paper), there is a natural graded variant of the necklace Lie bialgebra, denoted $(\mathcal{N}_\text{gr},\mathrm{br}_\text{gr}^-,\delta_\text{gr}^-)$ in this text, that is better adapted to the BV story. By considering BV structures on generalised representation varieties into which we show that $S\mathcal{N}_\text{gr}$ can inject, we derive the augmented variant of this structure, $(\mathcal{N}_\text{gr},\mathrm{br}^+_\text{gr},\delta^+_\text{gr})$, and prove that it satisfies (the appropriate version of) IBL algebra axioms.

A natural open question is whether the augmented necklace story has an analogue in the surface setting. If so, one might expect to find interesting new structures on character (super)varieties. Now, for $G$ a Lie group with metric Lie algebra $\mathfrak{g}$, the Atiyah-Bott Poisson structure on the $G$-character variety $\mathrm{Ch}_G (\Sigma)$ of a surface $\Sigma$ with nonempty boundary can be understood \textit{locally}, in the sense of factorisation homology, as coming from the infinitesimal braiding on the category $\mathrm{Rep}G$ of locally finite-dimensional $\mathrm{U}\mathfrak{g}$-modules. Namely, it was shown in \cite{ben2018integrating} that the algebra of functions $\mathscr{O}(\mathrm{Ch}_G(\Sigma))$ arises as the $G$-invariant part of the internal endomorphism algebra for the module category
\begin{equation*}
    \int_\Sigma \mathrm{Rep}G,
\end{equation*}
where the link between deformation $\mathrm{Rep}G \rightsquigarrow \mathrm{Rep}_qG$ and quantisation of this algebra was also studied. Expanded upon in \cite{karlsson2024deformation}, first-order deformation of $\mathrm{Rep}G$ given by the infinitesimal braiding can be tracked through factorisation homology in a systematic way, where it is seen to deform the commutative product on $\mathscr{O}(\mathrm{Ch}_G(\Sigma))$ in the direction of the Atiyah-Bott Poisson structure. Thus, we can push our speculation further and ask whether these new structures on character (super)varieties --- if they exist --- can be understood locally in a similar way.

\subsection*{Conventions}
In all of what follows, we fix a commutative ground-ring $R$. If unspecified, all linear categories, linear functors, modules, algebras, etc.\ are understood to be over $R$. We assume that 2 $\in R^\times$ is invertible, although this is only needed onwards from \S\ref{subsec:lie-bialg-structure}. Categories are assumed to be locally small. A symmetric monoidal category is called \textit{$\otimes$-cocomplete} if it is possesses all (small) colimits and if its monoidal product $\otimes$ perserves them (in each variable). We use string diagram notation for maps in symmetric monoidal categories (which we always treat as strict for simplicity), reading composition bottom-to-top.

\subsection*{Outline and Summary of results}
In \S\ref{sec:The-setting} we recall background material and define the superalgebra $\underline{\mathrm{D}}(a)$ of \textit{(graded) differential operators} on a commutative monoid $a$ in a symmetric monoidal $\Pi$-category $\mathcal{C}$, which is filtered by \textit{order}, proving its basic properties. The upshot for us is that we can speak of (and work with) \textit{BV algebras} in any such category. When $\mathcal{C}$ is $\otimes$-cocomplete and $a=Sx$ is free on an object $x \in \mathcal{C}$, we construct an explicit isomorphism
\begin{equation*}
    \underline{C}(S^{\leq k}x,Sx) \cong \underline{\mathrm{D}}_{k}(Sx)
\end{equation*}
between the supermodule of even \textit{and odd} maps $S^{\leq k}x \to Sx$ and the supermodule of (graded) differential operators on $Sx$ of order $k$ (Theorem \ref{thm:!existence-of-diffops-graded}).
\medskip

Fixing a double quiver $\overline{Q}$, we introduce its augmented necklace Lie algebra $(\mathcal{N},\mathrm{br}^+)$ in \S\ref{sec:NLBs}. Given a $\otimes$-cocomplete linear symmetric monoidal category $\mathcal{C}$ and dualisable objects $c_i \in \mathcal{C}$ for each vertex $i$ of the quiver, we construct an object $h \in \mathcal{C}$ (a `representation variety') and show that the commutative monoid $Sh^\vee$ (its `algebra of functions') admits two canonical Poisson brackets $\{-,-\}^-$ and $\{-,-\}^+$, pictorially defined by
\begin{equation*}
    \vcenter{\hbox{\includegraphics[scale=1]{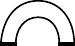}}} \quad \text{and} \quad \vcenter{\hbox{\includegraphics[scale=1]{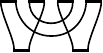}}}\;,
\end{equation*}
respectively. In particular, the commutative $R$-algebra $\mathcal{C}(1,Sh^\vee)$ of maps from the monoidal unit $1$ to $Sh^\vee$ admits two canonical Poisson brackets too.\footnote{This is because linear lax symmetric monoidal functors, such as $\mathcal{C}(1,-)$, send Poisson algebras to Poisson algebras.} Writing $S\mathcal{N}$ for the free $R$-algebra on $\mathcal{N}$ (which has Poisson brackets induced by the Lie brackets $\mathrm{br}^\pm$), we define an algebra map
\begin{equation*}
    \mathrm{tr}:S\mathcal{N} \to \mathcal{C}(1,Sh^\vee)
\end{equation*}
that intertwines $\mathrm{br}^{\pm}$ with $\{-,-\}^\pm$. We show that this map is injective for $\mathcal{C}$ a category of certain 1-dimensional cobordisms (Proposition \ref{prop:injectivity-of-tr}), showing that the simple Poisson brackets on $Sh^\vee$ subsume the combinatorics of the (augmented) necklace Lie algebra.
\medskip

In \S\ref{subsec:lie-bialg-structure} we consider the full IBL structure $(\mathcal{N},\mathrm{br}^-,\delta^-)$ of the necklace Lie bialgebra, introducing its augmented variant $(\mathcal{N},\mathrm{br}^+,\delta^+)$. These IBL structures can be understood as BV operators $\Delta^\pm$ on $\bigwedge \mathcal{N}$. Given a $\otimes$-cocomplete symmetric monoidal $\Pi$-category $\mathcal{C}$ and dualisable objects $c_i \in \mathcal{C}$ with odd involutions $\iota_i \in \underline{\mathcal{C}}(c_i,c_i)$ for each vertex, we consider a commutative monoid $S\widetilde{h}^\vee$ (where $\widetilde{h}$ is an appropriate `representation variety') and show that it --- and hence the commutative $R$-superalgebra $\underline{\mathcal{C}}(1,S\widetilde{h}^\vee)$ of even and odd maps from $1$ to $S\widetilde{h}^\vee$\footnote{Using that lax symmetric monoidal $\Pi$-functors send BV algebras to BV algebras (Proposition \ref{prop:pi-functor-on-BV-algs}).} --- carries two canonical BV operators $\widetilde{\Delta}^\pm$. We define a superalgebra map
\begin{equation*}
    \textstyle \mathrm{otr}:\bigwedge \mathcal{N} \to \underline{\mathcal{C}}(1,S\widetilde{h}^\vee),
\end{equation*}
and show that it intertwines the BV operators $\Delta^\pm$ and $\widetilde{\Delta}^\pm$. Due to the presence of the involutions $\iota_i$ in the constructions, the question of injectivity of $\mathrm{otr}$ is harder, and is left to \S\ref{sec:Injectivity-of-otr}.
\medskip

More natural for the BV approach are graded variants $(\mathcal{N}_\text{gr},\mathrm{br}_\text{gr}^\pm,\delta_\text{gr}^\pm)$ of the (augmented) necklace Lie bialgebra mentioned earlier, and these are the concern of \S\ref{sec:grNLBs}. These yield BV operators $\Delta^\pm_\text{gr}$ on $S\mathcal{N}_\text{gr}$. Given a $\otimes$-cocomplete symmetric monoidal $\Pi$-category $\mathcal{C}$ and dualisable objects $c_i \in \mathcal{C}$ (here the odd involutions are \textit{not} needed) we as before show that the commutative monoid $Sh_\text{gr}^\vee$ (where $h_\text{gr}$ is again an appropriate `representation variety') admits two canonical BV operators $\nabla^\pm$, and we give a
natural superalgebra map
\begin{equation*}
    \mathrm{tr}_\text{gr}:S\mathcal{N}_\text{gr} \to \underline{\mathcal{C}}(1,Sh_\text{gr}^\vee)
\end{equation*}
that intertwines the BV operators $\Delta^\pm_\text{gr}$ and $\nabla^\pm$. In fact, we work backwards, \textit{deriving} the operations $\mathrm{br}_\text{gr}^\pm$ and $\delta_\text{gr}^\pm$ --- and simultaneously proving that they satisfy the appropriate version of IBL algebra axioms (Corollary \ref{cor:-grneck-is-odd-ibl}) --- from the representation perspective, which is made possible by the following result that we also prove (in the proof of Theorem \ref{thm:graded-necklace-BV-alg}): \textit{the map $\mathrm{tr}_\text{gr}$ is injective for a certain choice of category and dualisable objects }(again, this concerns 1-dimensional cobordisms). This, in our view, justifies the categorical approach, perhaps hinting at it as more fundamental.
\medskip

Finally, in \S\ref{sec:Injectivity-of-otr}, we address the question of whether the map $\mathrm{otr}$ can be injective. We suspect the answer is yes, but we do not prove it. Instead, we give an example where $\mathrm{otr}$ has kernel the ideal generated by constant paths (Proposition \ref{prop:near-injectivity-of-otr}), and briefly outline how one might arrange for $\mathrm{otr}$ to be injective.

\subsection*{Acknowledgements}
It is my pleasure to thank the following people: Ján Pulmann, for insightful discussions, his interest in this work, and valuable advice on the writing of this paper; Lucy Spouncer, for the idea of considering arrow-insertion as opposed to arrow-removal --- which arose during discussions with homotopy theorists in Edinburgh --- and for help in verifying IBL algebra conditions; Anton Alekseev and Pavol Ševera, for enlightening conversations and ideas.

This research was supported in part by the grant number 226683 and by the National Center for Competence in Research SwissMAP of the Swiss National Science Foundation.

\section{Preliminaries}\label{sec:The-setting}

The purpose of this section is mainly to find the right categorial setting needed to discuss \textit{BV algebras}, setting up machinery to define them. We discuss more than what will be needed though, in order to show where they sit in the larger mathematical context.

Given an object $x$ in a symmetric monoidal category $\mathcal{C}$, the \textit{free commutative monoid on $x$ in $\mathcal{C}$} (if it exists) is a commutative monoid $Sx$ in $\mathcal{C}$ together with a map $x \to Sx$, initial as such. When $Sx$ exists for all $x$, then this construction can of course be extended to a functor $S:\mathcal{C} \to \mathrm{CMon}(\mathcal{C})$ left adjoint to forget. Our constructions in later sections involve looking at \textit{differential operators} on free commutative monoids, so we recall the following.
\begin{constr}\label{constr:free-com-mon}
    Suppose that the symmetric monoidal category $\mathcal{C}$ is $\otimes$-cocomplete. Then the free commutative monoid on any object $x$, $Sx$, exists and can be constructed explicitly in the following standard way:
    \begin{itemize}
        \item \textit{The underlying object.}\\
        For each $n \geq 0$, the symmetric group $S_n$ acts on the object $x^{\otimes n}$ by permuting monoidal factors. Define $S^n x = (x^{\otimes n})_{S_n}$ to be the coequalizer of this action. That is, it is the colimit
        % https://q.uiver.app/#q=WzAsMyxbMCwwLCJ4XntcXG90aW1lcyBufSJdLFsxLDAsInhee1xcb3RpbWVzIG59Il0sWzIsMCwiU15uIHgiXSxbMCwxLCIiLDAseyJvZmZzZXQiOjJ9XSxbMSwyXSxbMCwxLCIoU19uKSIsMCx7Im9mZnNldCI6LTJ9XSxbNSwzLCIiLDIseyJzaG9ydGVuIjp7InNvdXJjZSI6MjAsInRhcmdldCI6MjB9LCJsZXZlbCI6MSwic3R5bGUiOnsiYm9keSI6eyJuYW1lIjoiZG90dGVkIn0sImhlYWQiOnsibmFtZSI6Im5vbmUifX19XV0=
        \begin{equation}
            \begin{tikzcd}[cramped]
            	{x^{\otimes n}} & {x^{\otimes n}} & {S^n x}
            	\arrow[""{name=0, anchor=center, inner sep=0}, shift right=2, from=1-1, to=1-2]
            	\arrow[""{name=1, anchor=center, inner sep=0}, "{(S_n)}", shift left=2, from=1-1, to=1-2]
            	\arrow[from=1-2, to=1-3]
            	\arrow[shorten <=1pt, shorten >=1pt, dotted, no head, from=1, to=0].
            \end{tikzcd}
        \end{equation}
        In particular, we have $S^0 x \cong 1$ and $S^1 x \cong x$. The underlying object of the free commutative monoid on $x$ is given by the infinite coproduct
        \begin{equation}\label{eqn:graded-decomposition}
            Sx = \bigsqcup_{n\geq 0} S^n x.
        \end{equation}

        \item  \textit{The unit.}\\
        The unit map $1 \to Sx$ is simply the stuctural map $1 \cong S^0 x \to Sx$.

        \item \textit{The multiplication.}\\
        Since $\otimes$ preserves colimits, we can specify the multiplication map $Sx \otimes Sx \to Sx$ by specifying maps $S^m x \otimes S^n x \to S^{m+n} x$ for each $m,n \geq 0$. To specify these maps, notice that $x^{\otimes m} \otimes x^{\otimes n} \cong x^{\otimes (m+n)} \to S^{m+n}x$ coequalises the action of $S_m \times S_n$, of which $S^m x \otimes S^n x$ is the coequaliser (since $\otimes$ preserves colimits). Therefore, the maps $x^{\otimes m} \otimes x^{\otimes n} \to S^{m+n}x$ factor, yielding the desired maps $S^m x \otimes S^n x \to S^{m+n} x$.
    \end{itemize}
    This data renders $Sx$ a commutative monoid in $\mathcal{C}$, and it is quick to check that the structural map $x \cong S^1 x \to Sx$ is initial among all maps from $x$ to a commutative monoid in $\mathcal{C}$. Explicitly, if $f:x \to a$ is any other such map, then the unique monoid map $Sx \to a$ factoring it has $S_n$-coequalising lifts
    % https://q.uiver.app/#q=WzAsMyxbMCwwLCJ4XntcXG90aW1lcyBufSJdLFsxLDAsImFee1xcb3RpbWVzIG59Il0sWzIsMCwiYSJdLFsxLDIsIlxcbXUiXSxbMCwxLCJmXntcXG90aW1lcyBufSJdXQ==
    \begin{equation*}
        \begin{tikzcd}
        	{x^{\otimes n}} & {a^{\otimes n}} & a
        	\arrow["{f^{\otimes n}}", from=1-1, to=1-2]
        	\arrow["\mu", from=1-2, to=1-3]
        \end{tikzcd},
    \end{equation*}
    where $\mu$ is the multiplication of $a$. Notice that $Sx$ is in fact graded according to the decomposition \eqref{eqn:graded-decomposition}, which we refer to as `polynomial grading'. 
\end{constr}

\subsection{Differential operators on commutative monoids in linear SMCs}\label{sec:Diffops}
The notion of differential operators on commutative algebras makes sense for commutative monoids in any linear symmetric monoidal category. This has appeared in the literature before, such as in \cite[\S2.2]{ginzburg2010differential}, but we give our description, together with explicit derivations of basic properties for completeness (which are analogous to the usual theory).\footnote{While we restrict ourselves to commutative settings, noncommutative settings are considered in \cite{ginzburg2010differential}. In the ordinary case of the category of (super) vector spaces, nonassociative settings are considered in \cite{akman1997some}.}

Let $\mathcal{C}$ be an $R$-linear symmetric monoidal category and $a \in \mathcal{C}$ a commutative monoid. For $m,n \geq 1$, consider the $R$-linear maps $\mathcal{C}(a^{\otimes m},a^{\otimes n}) \to \mathcal{C}(a^{\otimes (m+1)},a^{\otimes n})$ given by
\begin{equation*}
    \vcenter{\hbox{\includegraphics[scale=1]{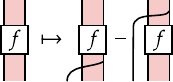}}}\;.
\end{equation*}
We denote these maps by $T$. Consider the sequence of maps
\begin{equation}\label{eqn:sequence-of-maps}
    \mathcal{C}(a,a) \stackrel{T}{\to} \mathcal{C}(a \otimes a ,a) \stackrel{T}{\to} \mathcal{C}(a \otimes a \otimes a,a) \to \cdots
\end{equation}
\begin{defn}\label{defn:diffop}
    An endomorphism $\Delta \in \mathcal{C}(a,a)$ is called a \textbf{differential operator on $a$} if there exists some $k\geq1$ such that $T^k(\Delta) = 0$. In this case, the differential operator $\Delta$ is said to have \textbf{order} ($\leq$) $k-1$.
\end{defn}
An explicit form of the defining condition for a differential operator will be given in Proposition \ref{prop:diffop-condition}.
\begin{rem}
    The composites $T^k(\Delta) \circ (\mathrm{id} \otimes u)$, where $u:1\to a$ is the unit, are generalisations of the maps $\Phi^k_\Delta$ defined in \cite{koszul1985crochet}.
\end{rem}
Let $\mathrm{D}(a) \subset \mathcal{C}(a,a)$ denote the submodule of differential operators on $a$, and let $\mathrm{D}_k(a) \subset \mathrm{D}(a)$ denote the submodule of differential operators of order $k$. We have a filtration
\begin{equation*}
    \mathrm{D}_0(a) \subset \mathrm{D}_1(a) \subset \mathrm{D}_2(a) \subset \dots \subset \mathrm{D}(a).
\end{equation*}
In fact, $\mathrm{D}(a)$ is a filtered \textit{subalgebra} of $\mathcal{C}(a,a)$. Before this is proven, we state an intermediate result.
\begin{lem}\label{lem:T-on-comp}
Let $f: a^{\otimes m} \to a^{\otimes n}$ and $g: a^{\otimes n} \to a^{\otimes p}$ be maps (with $m,n,p \geq 1$). Then for all $N \geq 0$,
    \begin{equation*}
        T^N(g\circ f)
        = \sum_{I \subset \{1,\dots,N\}} T^{|I|}(g) \circ \left(\mathrm{id}_{a^{\otimes |I|}} \otimes T^{N-|I|}(f)\right) \circ \left( \sigma_I \otimes \mathrm{id}_{a^{\otimes m}} \right),
    \end{equation*}
    where $\sigma_I:a^{\otimes N} \to a^{\otimes N}$ permutes the tensor factors according to the unique permutation of $\{1,\dots,N\}$ that maps $I$ to  $\{1,\dots,|I|\}$ and preserves the orderings on both $I$ and $\{1,\dots,N\}\setminus I$ (a shuffle). Diagrammatically,
    \begin{equation}\label{eqn:T-on-comp}
        \vcenter{\hbox{\includegraphics[scale=1]{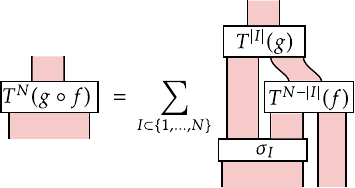}}}\;.
    \end{equation}
\end{lem}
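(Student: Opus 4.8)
The plan is to prove the identity by induction on $N$, the engine being the single-step case $N=1$, namely the Leibniz-type rule
\begin{equation*}
    T(g \circ f) = T(g) \circ (\mathrm{id}_a \otimes f) + g \circ T(f).
\end{equation*}
I would first establish this base case by a direct string-diagram computation: expanding $T$ on both sides according to its definition, each side acquires one term in which the fresh input is multiplied into the first \emph{output} of $g \circ f$ and one term in which it is multiplied into the first \emph{input}, together with two ``cross terms'' in which the fresh input is multiplied into the first output of $f$ \emph{before} applying $g$. These two cross terms are literally the same diagram and cancel, leaving exactly $T(g\circ f)$. Note that this step is purely formal: it uses only functoriality of $\otimes$ and the interchange law, not commutativity or even associativity of the multiplication of $a$.

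For the inductive step I would write $T^{N+1}(g\circ f) = T\bigl(T^N(g\circ f)\bigr)$, substitute the inductive hypothesis, and apply $T$ to each summand
\begin{equation*}
    T^{|I|}(g) \circ \bigl(\mathrm{id}_{a^{\otimes |I|}} \otimes T^{N-|I|}(f)\bigr) \circ \bigl(\sigma_I \otimes \mathrm{id}_{a^{\otimes m}}\bigr),
\end{equation*}
regarded as a threefold composite. Iterating the single-step rule distributes $T$ across the three factors: the new (leftmost) strand is either absorbed into the $g$-block — promoting $T^{|I|}(g)$ to $T^{|I|+1}(g)$ and enlarging $I$ by the label of the new strand — or carried by $\mathrm{id}_a \otimes (-)$ past the $g$-block to be absorbed into the $f$-block, promoting $T^{N-|I|}(f)$ to $T^{N-|I|+1}(f)$ and leaving the new label outside $I$. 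As $I$ ranges over subsets of $\{1,\dots,N\}$ and both outcomes are taken, every subset $I'\subseteq\{1,\dots,N+1\}$ is produced exactly once, sorted by whether or not it contains the new label.

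To keep the expansion manageable I would record two auxiliary facts about how $T$ meets the structural factors of each summand: first, that $T(\mathrm{id}_{a^{\otimes k}} \otimes h)=0$ for $k\geq 1$, since a leading identity strand forces the output-multiplication and input-multiplication terms to coincide; and second, a formula for $T(\sigma_I \otimes \mathrm{id}_{a^{\otimes m}})$, obtained by sliding the fresh strand through the shuffle using naturality of the symmetry, which in particular vanishes precisely when $\sigma_I$ fixes the first tensor factor. The first fact kills the spurious middle contribution in the threefold expansion; the second governs the surviving permutation terms.

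The main obstacle will be the shuffle bookkeeping: I must verify that after transporting the new strand past $\sigma_I \otimes \mathrm{id}_{a^{\otimes m}}$ and re-merging it with the appropriate block, the composite of permutations is exactly the shuffle $\sigma_{I'}$ prescribed for the enlarged subset $I'$ — both in the $g$-routed case (where one expects $\mathrm{id}_a \otimes \sigma_I$) and in the $f$-routed case (where the $T$-of-a-shuffle contributions must recombine). I would settle this by checking the two defining properties of $\sigma_{I'}$ directly, namely that it carries $I'$ to an initial segment and preserves the induced orders on $I'$ and on its complement, reducing each to the corresponding property of $\sigma_I$ together with the single elementary insertion introduced by $T$; every rearrangement of structural permutations is then justified by naturality of the braiding and functoriality of $\otimes$. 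This combinatorial matching is the only genuinely delicate part, the rest being formal diagram manipulation.
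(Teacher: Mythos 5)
Your proof skeleton---induction on $N$ powered by the one-step Leibniz rule $T(g\circ f)=T(g)\circ(\mathrm{id}_a\otimes f)+g\circ T(f)$, with the fresh strand routed either into the $g$-block or the $f$-block so that subsets of $\{1,\dots,N+1\}$ are each produced exactly once---is the same as the paper's (which inducts from the trivial case $N=0$ and does the step diagrammatically), and your base-case cancellation argument is fine. The gap is in the inductive step, and it traces back to your guess of what $T$ does with the fresh strand. You take $T(\phi)$ to multiply the fresh (leftmost) strand into the \emph{first} input and \emph{first} output of $\phi$. Under that reading the theory degenerates: by associativity the fresh strand of $T(T(\phi))$ merges with the previous one, so $T^2(\phi)$ has two terms instead of four, the order filtration collapses ($T^{2}(\phi)=0$ forces $T(\phi)=0$ by inserting a unit), and the lemma itself becomes \emph{false}---for $a=k[t]$, $m=n=p=1$, $f=g=d/dt$, the two sides of \eqref{eqn:T-on-comp} at $N=2$, evaluated on $x_1\otimes x_2\otimes w$, differ by the nonzero term $x_1'x_2'w$. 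The paper's $T$ instead multiplies the fresh strand into the input and output slots on the side \emph{away} from where fresh strands accumulate: fresh strands pile up on the left, while both multiplications happen at the last input and last output. This is what makes the statement's placement $\sigma_I\otimes\mathrm{id}_{a^{\otimes m}}$ and Lemma \ref{lem:precompose-by-u} (unit inserted in the last slot) consistent, and it is what any correct execution must use.

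With the actual $T$, your two auxiliary facts are exactly reversed, and this breaks your accounting of the threefold expansion. First, $T(\sigma_I\otimes\mathrm{id}_{a^{\otimes m}})=0$ holds \emph{always}, not "precisely when $\sigma_I$ fixes the first factor": the fresh strand is multiplied into the last strand, which the permutation fixes because $m\geq1$. Second, a leading identity block does not kill $T$; on the contrary,
\[
T(\mathrm{id}_{a^{\otimes k}}\otimes h)=(\mathrm{id}_{a^{\otimes k}}\otimes T(h))\circ(\text{shuffle carrying the fresh strand to position }k+1),
\]
while it is a \emph{trailing} identity block that gives zero, $T(h\otimes\mathrm{id}_{a^{\otimes k}})=0$. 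Consequently, in the expansion of $T$ applied to a summand of the inductive hypothesis, the permutation term always vanishes, and the $f$-routed summand for $I'=I+1$ comes from the \emph{middle} factor $T\bigl(\mathrm{id}_{a^{\otimes|I|}}\otimes T^{N-|I|}(f)\bigr)$---not from "recombining the $T$-of-a-shuffle contributions", which are identically zero. As written, your plan therefore never generates the $f$-promoted terms (already at $N=2$ the $I=\{2\}$ summand would be missing), which is exactly the footprint of the counterexample above. The good news is that once the convention is corrected your scheme closes with less bookkeeping than you anticipate: each summand contributes precisely the two terms $I'=\{1\}\cup(I+1)$ (fresh strand absorbed by $g$) and $I'=I+1$ (fresh strand passing through the identity block into $f$), and the composites of shuffles match $\sigma_{I'}$ on the nose.
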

\begin{proof}
    We prove this by induction on $N$. First, notice that the base case $N=0$ is trivially satisfied. Now assume that \eqref{eqn:T-on-comp} holds for $N=K$. We then have that
    \begin{align*}
        &\vcenter{\hbox{\includegraphics[scale=1]{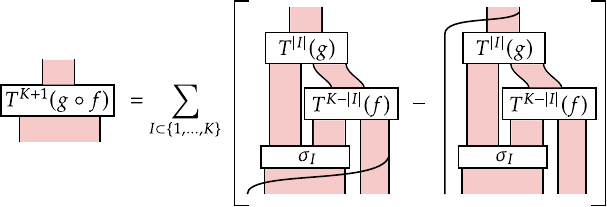}}}\\
        &\vcenter{\hbox{\includegraphics[scale=1]{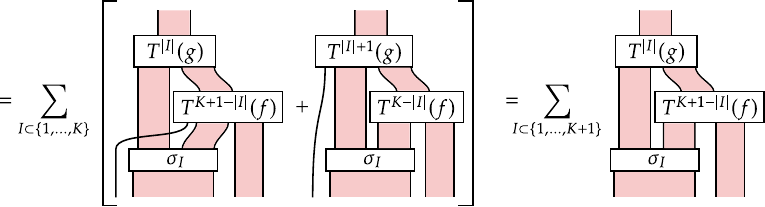}}}\;,
    \end{align*}
    showing that \eqref{eqn:T-on-comp} holds for $N=K+1$.
\end{proof}

\begin{prop}\label{prop:comm-of-diffops}
    Let $\Delta \in \mathrm{D}_k(a)$ and $\nabla \in \mathrm{D}_l(a)$. Then $\Delta \circ \nabla \in \mathrm{D}_{k+l}(a)$. Thus, $\mathrm{D}(a)$ is a filtered subalgebra of $\mathcal{C}(a,a)$.
    In addition, $[\Delta,\nabla] = \Delta \circ \nabla - \nabla \circ \Delta \in \mathrm{D}_{k+l-1}(a)$.\footnote{To make sense of the case $k=l=0$, we understand $\mathrm{D}_{-1}(a)$ to be $0$.}
\end{prop}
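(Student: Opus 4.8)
The plan is to derive both statements directly from the composition formula of Lemma \ref{lem:T-on-comp}, handling the order bound $\Delta\circ\nabla\in\mathrm{D}_{k+l}(a)$ first and the sharper commutator bound second.

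For the composition, I would apply Lemma \ref{lem:T-on-comp} with $g=\Delta$, $f=\nabla$ and $N=k+l+1$ (so $m=n=p=1$). Every summand then contains a factor $T^{|I|}(\Delta)$ and a factor $T^{k+l+1-|I|}(\nabla)$. Since $T^{k+1}(\Delta)=0$ and $T$ is $R$-linear, $T^{j}(\Delta)=T^{j-k-1}(T^{k+1}(\Delta))=0$ for every $j\ge k+1$, and likewise $T^{j}(\nabla)=0$ for $j\ge l+1$. A pigeonhole argument then finishes it: if $|I|\ge k+1$ the $\Delta$-factor vanishes, while if $|I|\le k$ then $k+l+1-|I|\ge l+1$ and the $\nabla$-factor vanishes. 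Hence $T^{k+l+1}(\Delta\circ\nabla)=0$, i.e. $\Delta\circ\nabla\in\mathrm{D}_{k+l}(a)$, which together with the submodule property already noted shows $\mathrm{D}(a)$ is a filtered subalgebra.

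For the commutator I would run the same computation at $N=k+l$ for both $\Delta\circ\nabla$ and $\nabla\circ\Delta$. Now the vanishing is sharper: a summand of $T^{k+l}(\Delta\circ\nabla)$ survives only when $|I|=k$ (otherwise one of $T^{|I|}(\Delta)$, $T^{k+l-|I|}(\nabla)$ is zero), and a summand of $T^{k+l}(\nabla\circ\Delta)$ survives only when its index set has size $l$. Writing $z_1,\dots,z_{k+l}$ for the new inputs and $c$ for the original one, the surviving term of the first indexed by a $k$-subset $I$ is $T^{k}(\Delta)\circ(\mathrm{id}_{a^{\otimes k}}\otimes T^{l}(\nabla))\circ(\sigma_I\otimes\mathrm{id}_a)$, feeding $z_I$ into the top symbol of $\Delta$ and $z_{I^{c}}$ together with $c$ into the top symbol of $\nabla$; symmetrically for $\nabla\circ\Delta$. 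The goal is to show these two sums coincide, so that $T^{k+l}([\Delta,\nabla])$ vanishes and $[\Delta,\nabla]\in\mathrm{D}_{k+l-1}(a)$.

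The hard part — and the only place commutativity of $a$ genuinely enters — is matching the surviving terms. The key input is that the top symbol is order zero in its operator slot: unwinding the definition of $T$, the hypothesis $T^{k+1}(\Delta)=0$ says exactly that $T^{k}(\Delta)\colon a^{\otimes(k+1)}\to a$ is an $a$-module map in its last variable, hence factors as $\mu\circ(s_\Delta\otimes\mathrm{id}_a)$ for the symbol $s_\Delta:=T^{k}(\Delta)\circ(\mathrm{id}_{a^{\otimes k}}\otimes u)$, and likewise $T^{l}(\nabla)=\mu\circ(s_\nabla\otimes\mathrm{id}_a)$; this uses the characterisation of order-zero operators ($\mathrm{D}_0(a)$) as multiplications by elements, which I would record as a short preliminary lemma. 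Substituting these factorisations collapses each surviving term into a single associative product: the $I$-term of $T^{k+l}(\Delta\circ\nabla)$ becomes the $\mu$-product of $s_\Delta(z_I)$, $s_\nabla(z_{I^{c}})$ and $c$, while the $I^{c}$-term of $T^{k+l}(\nabla\circ\Delta)$ becomes the product of $s_\nabla(z_{I^{c}})$, $s_\Delta(z_I)$ and $c$. Commutativity and associativity of $\mu$ identify these two, and the bijection $I\mapsto I^{c}$ between $k$-subsets and $l$-subsets of $\{1,\dots,k+l\}$ pairs them exactly, giving $T^{k+l}(\Delta\circ\nabla)=T^{k+l}(\nabla\circ\Delta)$. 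I expect the factorisation of the top symbol through $\mu$ to be the one step needing real care, since it is precisely what forces the delicate top-order cancellation; everything else is shuffle bookkeeping that Lemma \ref{lem:T-on-comp} has already organised.
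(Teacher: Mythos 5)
Your proposal is correct and follows essentially the same route as the paper: the order bound comes from Lemma \ref{lem:T-on-comp} by the pigeonhole argument, and the commutator bound comes from applying that lemma at $N=k+l$, observing that only the $|I|=k$ (resp.\ $|I|=l$) terms survive, and then factoring the top symbols as $T^k(\Delta)=\mu\circ(s_\Delta\otimes\mathrm{id}_a)$ and $T^l(\nabla)=\mu\circ(s_\nabla\otimes\mathrm{id}_a)$ --- which is precisely what the paper does diagrammatically when it precomposes with the unit $u$ and invokes the vanishing of $T(T^k(\Delta))$ and $T(T^l(\nabla))$ --- so that commutativity of $\mu$ and the bijection $I\mapsto I^c$ cancel the paired terms. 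The only cosmetic difference is that the paper assembles both composites into a single expression for $T^{k+l}([\Delta,\nabla])$ and shows it vanishes, whereas you compare $T^{k+l}(\Delta\circ\nabla)$ and $T^{k+l}(\nabla\circ\Delta)$ term by term.
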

\begin{proof}
    The first claim is an immediate consequence of Lemma \ref{lem:T-on-comp}.
    For the second, using that $T^{k+1}(\Delta)$ and $T^{l+1}(\nabla)$ vanish, Lemma \ref{lem:T-on-comp} shows that $T^{k+l}([\Delta,\nabla])$ is equal to
    \begin{equation}\label{eqn:T-on-commutator}
        \vcenter{\hbox{\includegraphics[scale=1]{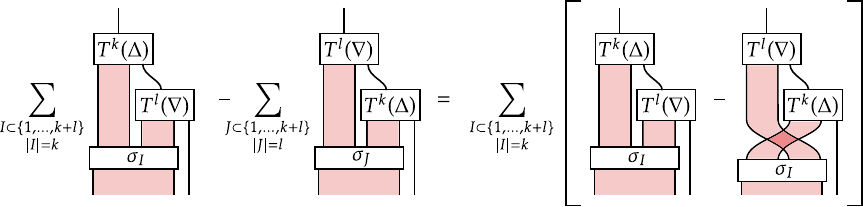}}}\;.
    \end{equation}
    Notice that
    \begin{equation*}
        \vcenter{\hbox{\includegraphics[scale=1]{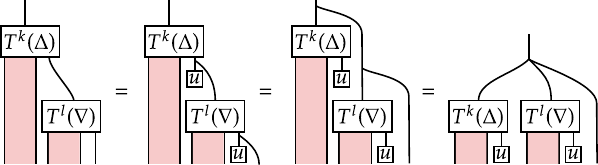}}}\;,
    \end{equation*}
    where $u:1\to a$ is the unit and, in the second equality, we used that $T(T^k(\Delta))$ and $T(T^l(\nabla))$ vanish. But then the same reasoning shows that 
    \begin{equation*}
        \vcenter{\hbox{\includegraphics[scale=1]{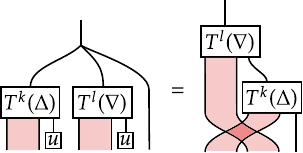}}}\;
    \end{equation*}
    and hence that \eqref{eqn:T-on-commutator} vanishes.
\end{proof}

To understand better what it means for a map $\Delta:a \to a$ to be a differential operator, we first understand what $T^N(\Delta)$ is explicitly.
\begin{lem}\label{lem:T^(f)}
    Let $\Delta \in \mathcal{C}(a,a)$. For all $N \geq 0$,
    \begin{equation}\label{eqn:T^N(f)}
        \vcenter{\hbox{\includegraphics[scale=1]{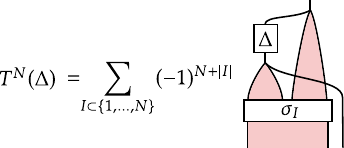}}}\;.
    \end{equation}
\end{lem}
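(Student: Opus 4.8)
The plan is to argue by induction on $N$, in direct parallel with the proof of Lemma \ref{lem:T-on-comp}. The expected shape of the right-hand side \eqref{eqn:T^N(f)} is a signed sum over subsets $S \subseteq \{1,\dots,N\}$ of the $N$ ``auxiliary'' inputs: for each $S$ one multiplies the inputs indexed by $S$ together (via iterated $\mu$) and records this as an outside factor, multiplies the remaining inputs together with the distinguished last input and feeds the result through $\Delta$, and finally multiplies the two outputs; the term carries the sign $(-1)^{|S|}$. A shuffle permutation $\sigma_I$ of exactly the type appearing in Lemma \ref{lem:T-on-comp} is used to route the auxiliary inputs into their two groups. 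I would begin by checking the base case $N=0$, where the only subset is $S=\varnothing$, the sum degenerates to the single map $\Delta$, and $T^0(\Delta)=\Delta$ by definition, so the formula holds.

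For the inductive step I would assume the formula for $N$ and compute $T^{N+1}(\Delta)=T\bigl(T^N(\Delta)\bigr)$ by applying the defining operation $T$ to the explicit sum, term by term. By its definition, $T$ inserts the new auxiliary input in two ways and subtracts: in one term the new input is multiplied into the argument that is fed through $\Delta$, and in the other it is multiplied onto the overall output (this second term carrying the minus sign). Using associativity and, crucially, commutativity of $\mu$ — which lets the new factor be absorbed into either group irrespective of its tensor position — each summand indexed by $S \subseteq \{1,\dots,N\}$ splits into exactly two summands for $\{1,\dots,N+1\}$: the first contributes the subset $S$, with the new index lying in the complement and the sign unchanged at $(-1)^{|S|}$; the second contributes $S \cup \{N+1\}$, with the extra minus sign producing $(-1)^{|S|+1}=(-1)^{|S\cup\{N+1\}|}$. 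Reindexing a subset of $\{1,\dots,N+1\}$ according to whether or not it contains $N+1$ then reassembles these contributions into the claimed sum for $N+1$, completing the induction.

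The main obstacle I anticipate is purely the diagrammatic bookkeeping: tracking the shuffle permutations $\sigma_I$ that place the freshly inserted tensor factor into the correct group, and confirming that the two groupings produced by $T$ glue together — with the correct signs — into a single clean sum over subsets. As in Lemma \ref{lem:T-on-comp}, commutativity of $\mu$ is precisely what makes this repackaging legitimate; once the string-diagram manipulations are set up carefully, the remaining verification is routine.
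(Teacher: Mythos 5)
Your proposal is correct and follows essentially the same route as the paper: the paper also proves \eqref{eqn:T^N(f)} by induction on $N$, with a trivial base case and an inductive step that applies $T$ to the formula for $N=K$ term by term, letting the two terms of $T$ (new input absorbed into the argument of $\Delta$, versus multiplied onto the output with a sign) regroup — via commutativity and associativity of $\mu$ — into the summands indexed by $S$ and $S \cup \{K+1\}$ respectively. Your sign bookkeeping and the reindexing of subsets of $\{1,\dots,N+1\}$ by whether they contain the new index match the paper's diagrammatic computation exactly.
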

\begin{proof}
    We use induction on $N$. The case $N=0$ holds trivially. Assume that \eqref{eqn:T^N(f)} holds for $N=K$. Then we have
    \begin{align*}
        \vcenter{\hbox{\includegraphics[scale=1]{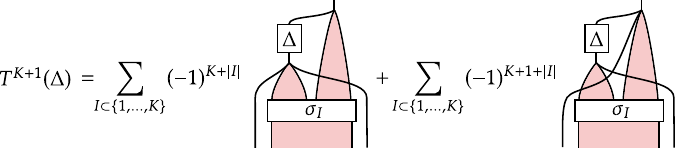}}}\\
        \vcenter{\hbox{\includegraphics[scale=1]{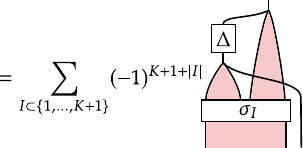}}}\;.
    \end{align*}
\end{proof}
\begin{lem}\label{lem:precompose-by-u}
    Let $f:a^{\otimes m} \to a^{\otimes n}$ be a map. Then $T(f) = 0$ if and only if $T(f) \circ (\mathrm{id}_{a^{\otimes m}} \otimes u) = 0$.
\end{lem}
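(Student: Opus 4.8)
\emph{Forward direction.} This is immediate: if $T(f)=0$ then any precomposite vanishes, in particular $T(f)\circ(\mathrm{id}_{a^{\otimes m}}\otimes u)=0$. All the content is in the converse, and the plan is to show that the hypothesis forces $f$ to act on its last input wire by plain multiplication into its last output wire, after which the two terms of $T(f)$ collapse by associativity.

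\emph{Reading off $T$ and unpacking the hypothesis.} First I would read the two summands of $T(f)\colon a^{\otimes(m+1)}\to a^{\otimes n}$ off the defining diagram: one multiplies the extra input into the last of $f$'s input wires and then applies $f$, the other applies $f$ and then multiplies the extra input into the last of $f$'s output wires (for $m=n=1$ this is the $N=1$ instance of Lemma~\ref{lem:T^(f)}). The map $\mathrm{id}_{a^{\otimes m}}\otimes u$ feeds the unit into precisely that last input wire of $f$. Since $u$ is a two-sided unit for $\mu$, the first summand then reduces to $f$ with its last input replaced by the extra input, and the second to the extra input multiplied into the last output wire of $g:=f\circ(\mathrm{id}_{a^{\otimes(m-1)}}\otimes u)$. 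Up to the symmetry isomorphism carrying the extra strand into the last source position, the hypothesis $T(f)\circ(\mathrm{id}_{a^{\otimes m}}\otimes u)=0$ thus yields the identity
\[
    f = (\mathrm{id}_{a^{\otimes(n-1)}}\otimes\mu)\circ(g\otimes\mathrm{id}_a).
\]

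\emph{Collapsing $T(f)$.} I would then substitute this identity back into both summands of $T(f)$ for arbitrary inputs. In the first summand the extra input and the genuine last input are multiplied before entering $f$'s last slot, which by the identity becomes their product multiplied into the last output wire of $g$; in the second summand $f$ is rewritten via the identity and the extra input is then multiplied into the same wire. Associativity of $\mu$ (and commutativity, if the diagram's braidings place the two factors on opposite sides) identifies the two summands, giving $T(f)=0$. Together with the forward direction this proves the claimed equivalence.

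\emph{Main obstacle.} The only real work is doing this element-free. Each ``multiply the extra input into the last factor'' must be written as a genuine morphism with the correct symmetry braidings transporting the extra strand across the remaining inputs, and both the passage from the hypothesis to the displayed identity and the final cancellation must be justified purely from the unit, associativity and commutativity axioms of $a$ together with naturality of the symmetry. This bookkeeping is routine but is where position- and order-errors are easiest to commit, and it is the only place where commutativity of $a$ might be invoked.
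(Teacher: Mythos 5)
Your proposal is correct and takes essentially the same route as the paper: the paper likewise unpacks the hypothesis $T(f)\circ(\mathrm{id}_{a^{\otimes m}}\otimes u)=0$ into the identity expressing $f$ as its unit-evaluation $g=f\circ(\mathrm{id}_{a^{\otimes(m-1)}}\otimes u)$ with the last input multiplied back in through the output, and then applies that identity twice — once in each summand of $T(f)$ — so that associativity (plus the symmetry bookkeeping) identifies the two terms. The diagrammatic bookkeeping you flag as the main obstacle is precisely what the paper's string-diagram computation carries out, so there is no gap in substance.
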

\begin{proof}
    The forward implication is immediate. For the backward implication, note that $T(f) \circ (\mathrm{id}_{a^{\otimes m}} \otimes u) = 0$ is equivalent to
    \begin{equation*}
        \vcenter{\hbox{\includegraphics[scale=1]{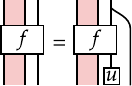}}}\;.
    \end{equation*}
    Using this twice, we have
    \begin{equation*}
        \vcenter{\hbox{\includegraphics[scale=1]{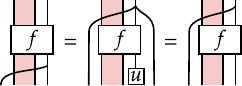}}}\;,
    \end{equation*}
    which is precisely $T(f)=0$.
\end{proof}

Combining Lemmas $\ref{lem:T^(f)}$ and \ref{lem:precompose-by-u}, we have the following explicit description of differential operators:\footnote{In the ordinary setting, this has been observed in Equation (18) of \cite{markl2001loop}, for example.}
\begin{prop}\label{prop:diffop-condition}
    A map $\Delta:a\to a$ is a differential operator of order $k$ if and only if
    \begin{equation*}
        \vcenter{\hbox{\includegraphics[scale=1]{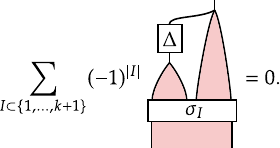}}}
    \end{equation*}
\end{prop}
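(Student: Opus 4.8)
The plan is to read off the defining condition from Definition \ref{defn:diffop} and then rewrite it in the explicit form displayed, using the two preceding lemmas as the only ingredients. By definition, $\Delta$ has order $k$ precisely when $T^{k+1}(\Delta) = 0$ (recall that $T^{k+1}(\Delta)=0$ corresponds to order $\leq k$, under the convention that $T^{k}(\Delta)=0$ gives order $\leq k-1$), so it suffices to show that $T^{k+1}(\Delta) = 0$ is equivalent to the displayed equation.

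First I would apply Lemma \ref{lem:precompose-by-u} with $f = T^k(\Delta) : a^{\otimes(k+1)} \to a$. Since $T^{k+1}(\Delta) = T(T^k(\Delta))$, the lemma says that $T^{k+1}(\Delta) = 0$ holds if and only if $T^{k+1}(\Delta) \circ \bigl(\mathrm{id}_{a^{\otimes(k+1)}} \otimes u\bigr) = 0$, i.e. if and only if the map obtained by capping off the final input leg with the unit vanishes. This is the step that turns the $(k+2)$-ary condition $T^{k+1}(\Delta) = 0$ into the $(k+1)$-ary condition appearing in the statement; in the language of the Remark following Definition \ref{defn:diffop}, it says that $\Delta$ has order $k$ if and only if $\Phi^{k+1}_\Delta = 0$.

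Next I would substitute the closed formula for $T^{k+1}(\Delta)$ furnished by Lemma \ref{lem:T^(f)} into this capped expression. Expanding the diagram \eqref{eqn:T^N(f)} and then precomposing with $\mathrm{id} \otimes u$ lets the unit be absorbed into the adjacent multiplication, collapsing and merging the subset-indexed summands of Lemma \ref{lem:T^(f)} into precisely the terms appearing in the proposition; at that point the equivalence with the stated diagrammatic condition is immediate.

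The main obstacle I anticipate is purely bookkeeping in this last step: one must track how capping a single leg with the unit acts on the (signed) sum over subsets produced by Lemma \ref{lem:T^(f)}, checking that the summands in which the capped leg is multiplied in, versus those in which it is fed through $\Delta$, recombine correctly and that no sign is dropped in the $\Pi$-category setting. I would also verify the off-by-one in the order convention (order $k \leftrightarrow T^{k+1}=0$) so that the number of legs in the displayed condition matches $\Phi^{k+1}_\Delta$ rather than $\Phi^{k}_\Delta$.
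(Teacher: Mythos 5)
Your proposal is correct and matches the paper's own argument, which is precisely to combine Lemma \ref{lem:T^(f)} (the closed formula for $T^{N}(\Delta)$) with Lemma \ref{lem:precompose-by-u} (vanishing of $T(f)$ is detected after capping the extra leg with the unit), together with the order convention $T^{k+1}(\Delta)=0 \Leftrightarrow$ order $\leq k$. The only cosmetic slip is the reference to the $\Pi$-category setting: this proposition lives in the plain linear symmetric monoidal setting of \S\ref{sec:Diffops}, so no parity signs arise.
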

In particular, Proposition \ref{prop:diffop-condition} says that the value of $\Delta \in \mathrm{D}_k(a)$ on a $(k+1)$-fold product (i.e.\ the term with $I = \{1,\dots,k+1\}$) is determined by its values on $(\leq k)$-fold products. Inductively, its value on an $N$-fold product for any $N \geq k+1$ is also determined. More explicitly:
\begin{prop}\label{prop:explicit-diffop}
    Let $\Delta \in \mathrm{D}_{k}(a)$ be a differential operator of order $k$, and let $N \geq k+1$. Then
    \begin{equation*}
        \vcenter{\hbox{\includegraphics[scale=1]{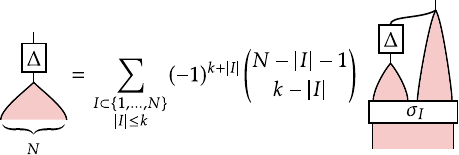}}}\;.
    \end{equation*}
\end{prop}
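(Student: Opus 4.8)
The plan is to prove the identity by induction on $N \ge k+1$, with Proposition \ref{prop:diffop-condition} serving as the base case and the associativity and commutativity of $a$ providing the passage from $N$ to $N+1$ factors. Throughout I write $\mu^{(N)}:a^{\otimes N}\to a$ for the iterated multiplication, so that the left-hand side is $\Delta\circ\mu^{(N)}$ and the right-hand side of the statement is a sum, over subsets $S\subseteq\{1,\dots,N\}$ with $|S|\le k$, of the terms obtained by applying $\Delta$ to the product of the factors indexed by $S$ and multiplying the outcome by the remaining factors, each weighted by the coefficient appearing in the statement (which I expect to be the signed binomial $(-1)^{k-|S|}\binom{N-|S|-1}{k-|S|}$). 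For $N=k+1$ every such binomial equals $1$, and the identity is then precisely Proposition \ref{prop:diffop-condition} --- itself the explicit unpacking of $T^{k+1}(\Delta)=0$ obtained through Lemmas \ref{lem:T^(f)} and \ref{lem:precompose-by-u}. This settles the base case.

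For the inductive step I would use associativity to write $\Delta\circ\mu^{(N+1)}=(\Delta\circ\mu^{(N)})\circ(\mathrm{id}_{a^{\otimes(N-1)}}\otimes\mu)$; that is, I merge the last two tensor factors $x_N,x_{N+1}$ into a single input $z$ and apply the inductive hypothesis for $N$ factors. This realises $\Delta\circ\mu^{(N+1)}$ as a sum of terms of two kinds, according to whether the merged input $z$ ends up in the outer product or inside the argument of $\Delta$. In the first kind $\Delta$ is already applied to at most $k$ of the original factors and the term has the desired shape, with $z=x_Nx_{N+1}$ simply contributing to the outer product. In the second kind $\Delta$ is applied to a product of $|S|+1\le k+1$ original factors; those with $|S|+1\le k$ are again of the desired shape, while the finitely many terms with $|S|=k$ have $\Delta$ applied to exactly $k+1$ original factors, and these I reduce back to $\le k$ factors by invoking the base case (Proposition \ref{prop:diffop-condition}) a second time. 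Commutativity of $a$ is what licenses the regrouping and the shuffles needed to reassemble the contributions subset-by-subset.

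After this reduction every term has $\Delta$ applied to at most $k$ of the original factors, so it remains to check that, for each subset $S$, the total coefficient built from the two kinds of contributions (together with the base-case reduction of the top terms) matches the coefficient prescribed at level $N+1$. I expect this bookkeeping to be the main obstacle: it comes down to a single binomial identity of Pascal type, namely $\binom{N-s-1}{k-s}+\binom{N-s-1}{k-s-1}=\binom{N-s}{k-s}$, which is exactly what closes the induction once the signs are tracked. Everything else is formal manipulation of string diagrams, valid in any $R$-linear symmetric monoidal category, since only associativity, commutativity, and $R$-linearity are used --- no $\otimes$-cocompleteness or parity structure is needed here. As a cross-check (and an alternative route avoiding the term-splitting) one can instead observe that $T^N(\Delta)=T^{N-k-1}(T^{k+1}(\Delta))=0$ for every $N\ge k+1$, extract from Lemmas \ref{lem:T^(f)} and \ref{lem:precompose-by-u} the expression of $\Delta$ on $N$ factors as a signed sum over \emph{proper} subsets of $\Delta$ on fewer factors, and then reduce the oversized ($>k$) terms by strong induction; this assembles the same closed-form coefficients and confirms the value on an $N$-fold product is not merely determined but given by the stated expression for all $N\ge k+1$.
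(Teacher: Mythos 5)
Your proposal is correct, and it takes a genuinely different route from the paper's. The paper's proof (Appendix \ref{Appendix A}) is not inductive: it applies Proposition \ref{prop:diffop-condition} simultaneously to \emph{every} subset $J \subset \{1,\dots,N\}$ with $|J| \geq k+1$ (the factors outside $J$ riding along as spectators), forms the single weighted sum $\sum_{|J|\geq k+1}(-1)^{|J|}\sum_{I \subset J}(-1)^{|I|}X_I = 0$, and then evaluates the collapsed coefficient $c_I = \sum_{J \supset I,\, |J| \geq k+1}(-1)^{|J|}$ in closed form --- it vanishes for $k+1 \leq |I| < N$, equals $(-1)^N$ for $I = \{1,\dots,N\}$, and equals $(-1)^{k+1}\binom{N-|I|-1}{N-k-1}$ for $|I| \leq k$ --- which isolates $X_{\{1,\dots,N\}}$ with the stated coefficients in one stroke (and your guessed coefficient $(-1)^{k-|S|}\binom{N-|S|-1}{k-|S|}$ agrees with the paper's, since $\binom{N-|I|-1}{N-k-1} = \binom{N-|I|-1}{k-|I|}$). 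Your induction-with-merging argument instead uses only the $(k+1)$-fold relation plus Pascal's identity, so it is more elementary; the price is that the bookkeeping is slightly heavier than you suggest. Merging $x_N$ and $x_{N+1}$ breaks the symmetry, so recollecting the coefficient of $X_T$ at level $N+1$ requires a three-way split according to $|T \cap \{N,N+1\}| \in \{0,1,2\}$: when it is $0$, the kind-(a) term $\binom{N-|T|-1}{k-|T|}$ combines with $\binom{N-1-|T|}{k-1-|T|}$ reduced top terms via your Pascal identity; when it is $2$, the unreduced kind-(b) term $-\binom{N-|T|}{k-|T|+1}$ combines with $\binom{N-|T|+1}{k-|T|+1}$ reduced terms, again via Pascal; and when it is $1$, only reduced terms contribute and their count $\binom{N-|T|}{k-|T|}$ is already the answer. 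All three cases close (I checked), so the induction goes through; it is a three-case verification rather than literally a single application of the identity. In short, the paper buys brevity with a one-shot inclusion--exclusion over all oversized subsets, while your route buys elementarity --- only the base relation and Pascal --- at the cost of handling the asymmetry introduced by the merge; your alternative ``cross-check'' route (strong induction from $T^N(\Delta)=0$) sits in between, using the paper's size-$N$ relation but still resolving oversized terms inductively rather than by the weighted-sum trick.
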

\begin{proof}
    See Appendix \ref{Appendix A}.
\end{proof}

Let us make the observation that $\Delta \in \mathrm{D}_{k}(a)$ is `determined on $(\leq k)$-fold products' helpful. For the next two results we assume that $\mathcal{C}$ is $\otimes$-cocomplete, so that free commutative monoids in $\mathcal{C}$ exist and can be modelled via Construction \ref{constr:free-com-mon}. Denote by $S^{\leq k}x$ the object $\bigsqcup_{n=0}^k S^{n}x$.
\begin{prop}\label{prop:generators-diffops}
    Suppose $\mathcal{C}$ is $\otimes$-cocomplete. Let $x \to a$ be a map in $\mathcal{C}$ such that the canonical map of monoids $Sx \to a$ is an epimorphism in $\mathcal{C}$ (we can say that $x \to a$ \textit{generates} $a$ as a commutative monoid). Given any map $\phi:S^{\leq k}x \to a$, there exists at most one $\Delta \in \mathrm{D}_{k}(a)$ such that the following diagram commutes:
    % https://q.uiver.app/#q=WzAsMyxbMCwwLCJTXntcXGxlcSBrfXgiXSxbMSwwLCJhIl0sWzEsMSwiYSJdLFswLDIsIlxccGhpIiwyXSxbMSwyLCJcXERlbHRhIl0sWzAsMSwiXFxtYXRocm17Y2FufSJdXQ==
    \begin{equation}\label{eqn:comm-diag}
        \begin{tikzcd}
    	{S^{\leq k}x} & a \\
    	& a
    	\arrow["{\mathrm{can}}", from=1-1, to=1-2]
    	\arrow["\phi"', from=1-1, to=2-2]
    	\arrow["\Delta", dashed, from=1-2, to=2-2]
    \end{tikzcd}.
    \end{equation}
   Equivalently, $\mathrm{can}^*:\mathrm{D}_{k}(a) \to \mathcal{C}(S^{\leq k}x,a)$ is injective.
\end{prop}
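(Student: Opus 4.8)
The plan is to prove the equivalent injectivity statement. Suppose $\Delta,\Delta' \in \mathrm{D}_{k}(a)$ both restrict along $\mathrm{can}\colon S^{\leq k}x \to a$ to the same map $\phi$. Setting $\Theta := \Delta - \Delta'$, which again lies in $\mathrm{D}_{k}(a)$ since this is a submodule of $\mathcal{C}(a,a)$, the hypothesis becomes $\Theta \circ \mathrm{can} = 0$, and the goal is to show $\Theta = 0$. This reduction turns the uniqueness claim into a vanishing statement, which is where the order-$k$ condition can be brought to bear.

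First I would exploit the generation hypothesis. Since the canonical monoid map $Sx \to a$ is an epimorphism, it suffices to show that $\Theta$ precomposed with it is zero. As $Sx = \bigsqcup_{n\geq 0} S^n x$ is a coproduct, with each component arising as an $S_n$-coequalizer as in Construction \ref{constr:free-com-mon}, this in turn reduces to checking that $\Theta$ kills each component $\mathrm{can}_n\colon S^n x \to a$ separately, i.e.\ that $\Theta$ vanishes on every $n$-fold product of generators. The components with $n \leq k$ are immediate, since they factor through $\mathrm{can}\colon S^{\leq k}x \to a$, on which $\Theta$ already vanishes by hypothesis.

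The content is in the components with $n \geq k+1$, and here I would invoke Proposition \ref{prop:explicit-diffop}, which expresses the value of the order-$k$ operator $\Theta$ on an $N$-fold product entirely in terms of its values on products of at most $k$ factors (tensored with identities and recombined by multiplication). Precomposing that formula with $(x\to a)^{\otimes N}$, every summand acquires a factor of the form $\Theta \circ \mu^{(j)} \circ (x\to a)^{\otimes j}$ with $j \leq k$; since multiplication in $a$ is commutative, each such factor coequalizes the $S_j$-action and hence equals $\Theta \circ \mathrm{can}_j$ composed with the quotient $x^{\otimes j}\to S^j x$, which vanishes by the previous step. By $R$-bilinearity of $\otimes$, a term with a zero tensor factor is itself zero, so $\Theta \circ \mathrm{can}_N \circ q = 0$, where $q\colon x^{\otimes N}\to S^N x$ is the defining coequalizer; as $q$ is a (regular, hence ordinary) epimorphism, we may cancel it to get $\Theta \circ \mathrm{can}_N = 0$. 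With all components killed, $\Theta$ precomposed with the canonical map $Sx \to a$ is zero, and the epimorphism property yields $\Theta = 0$.

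The step I expect to require the most care is the $n\geq k+1$ bookkeeping. One must verify that, after precomposition with generators, \emph{every} term produced by Proposition \ref{prop:explicit-diffop} genuinely reduces to a value of $\Theta$ on a product of at most $k$ generators, so that the vanishing established in the $n\leq k$ case applies uniformly; and one must track the passage between the plain tensor power $x^{\otimes N}$ and the symmetric power $S^N x$ through the coequalizer map $q$ carefully, so that the conclusion descends cleanly from $x^{\otimes N}$ to $S^N x$.
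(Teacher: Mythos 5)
Your proof is correct and follows essentially the same route as the paper's: both reduce to the components $x^{\otimes n}\to a$ of the canonical map, handle $n\leq k$ directly from the hypothesis, invoke Proposition \ref{prop:explicit-diffop} to express values on $N$-fold products ($N\geq k+1$) in terms of values on $(\leq k)$-fold products, and conclude via the epimorphism assumption. Passing to the difference $\Theta = \Delta - \Delta'$ (legitimate, since $\mathrm{D}_k(a)\subset\mathcal{C}(a,a)$ is a submodule and $T$ is linear) is only a cosmetic repackaging of the paper's direct comparison of $\Delta$ and $\Delta'$.
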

\begin{proof}
   Suppose that $\Delta$ and $\Delta'$ are two such differential operators. We will show that they are equal. Denote the map $x \to a$ by $f$. Recall that the canonical map $Sx \to a$ is equivalently the collection of $S_n$-coequalising maps $x^{\otimes n} \to a$ (for $n \geq 0$) given by the composites
   \begin{equation*}
       x^{\otimes n} \stackrel{f^{\otimes n}}{\to} a^{\otimes n} \stackrel{\mu}{\to} a
       =
       \;\vcenter{\hbox{\includegraphics[scale=1]{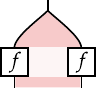}}}\;.
   \end{equation*}
   Similarly, for $n \leq k$, let $\varphi_n: x^{\otimes n} \to a$ denote the $S_n$-coequalising lifts of $\phi$. That $\Delta$ and $\Delta'$ make \eqref{eqn:comm-diag} commute says that for all $n \leq k$ we have
   \begin{equation*}
       \Delta \circ \mu \circ f^{\otimes n} = \varphi_n = \Delta' \circ \mu \circ f^{\otimes n}.
   \end{equation*}
   Meanwhile, $\Delta$ and $\Delta'$ are both order $k$ differential operators, so Proposition \ref{prop:explicit-diffop} shows that for all $N \geq k+1$ we have
   \begin{equation*}
       \vcenter{\hbox{\includegraphics[scale=1]{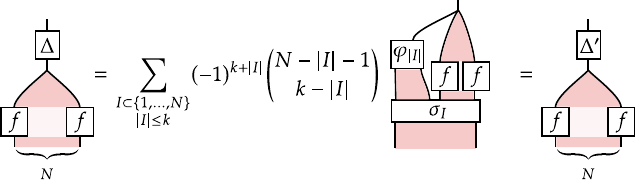}}}\;.
   \end{equation*}
   Thus, $\Delta \circ \mu \circ f^{\otimes n} = \Delta' \circ \mu \circ f^{\otimes n}$ for \textit{all} $n \geq 0$. Equivalently,
   \begin{equation*}
       Sx \stackrel{\mathrm{can}}{\to} a \stackrel{\Delta}{\to} a = Sx \stackrel{\mathrm{can}}{\to} a \stackrel{\Delta'}{\to} a.
   \end{equation*}
   But the canonical map $Sx \to a$ is epic, so $\Delta = \Delta'$.
\end{proof}

\begin{thm}\label{thm:!existence-of-diffops}
    Suppose $\mathcal{C}$ is $\otimes$-cocomplete. Let $x \in \mathcal{C}$. For all maps $\phi:S^{\leq k} x \to Sx$, there exists a unique $\Delta \in \mathrm{D}_{k}(Sx)$ such that the following diagram commutes:
    % https://q.uiver.app/#q=WzAsMyxbMCwwLCJTXntcXGxlcSBrfXgiXSxbMSwwLCJTeCJdLFsxLDEsIlN4Il0sWzAsMiwiXFxwaGkiLDJdLFsxLDIsIlxcRGVsdGEiLDAseyJzdHlsZSI6eyJib2R5Ijp7Im5hbWUiOiJkYXNoZWQifX19XSxbMCwxLCJcXG1hdGhybXtjYW59Il1d
    \begin{equation*}
        \begin{tikzcd}
            {S^{\leq k}x} & Sx \\
            & Sx
            \arrow["{\mathrm{can}}", from=1-1, to=1-2]
            \arrow["\phi"', from=1-1, to=2-2]
            \arrow["\Delta", dashed, from=1-2, to=2-2]
        \end{tikzcd}.
    \end{equation*}
    Equivalently, restriction $\mathrm{D}_{k}(Sx) \isoto \mathcal{C}(S^{\leq k}x,Sx)$ is an isomorphism of $R$-modules.
\end{thm}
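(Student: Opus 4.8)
The statement splits into the uniqueness and the existence of $\Delta$, which are exactly the injectivity and the surjectivity of $\mathrm{can}^*$. Uniqueness is already in hand: applying Proposition \ref{prop:generators-diffops} with $a=Sx$ and the structural map $x\to Sx$ (whose associated monoid map $Sx\to Sx$ is the identity, hence an epimorphism) shows that $\mathrm{can}^*\colon\mathrm{D}_{k}(Sx)\to\mathcal{C}(S^{\leq k}x,Sx)$ is injective. So the whole content is to produce, for a given $\phi\colon S^{\leq k}x\to Sx$, a differential operator $\Delta$ of order $k$ with $\Delta\circ\mathrm{can}=\phi$.

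The plan is to build $\Delta$ out of the explicit higher-Leibniz formula of Proposition \ref{prop:explicit-diffop}, read now as a \emph{definition} rather than as a consequence. Using $\otimes$-cocompleteness and the decomposition \eqref{eqn:graded-decomposition}, a map $Sx\to Sx$ is the same datum as a family of $S_N$-invariant maps $\delta_N\colon x^{\otimes N}\to Sx$ (the lifts of the components $S^N x\to Sx$). I set $\delta_N$ to be the component of $\phi$ for $N\leq k$, and for $N\geq k+1$ I set $\delta_N$ equal to the right-hand side of Proposition \ref{prop:explicit-diffop}, namely the sum over subsets $I\subseteq\{1,\dots,N\}$ with $|I|\leq k$ of terms built from the multiplication $\mu$ of $Sx$ and the lower components $\delta_{|I|}$, all of which are $\phi$-values. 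The first thing to check is well-definedness: each $\delta_N$ must coequalise the $S_N$-action so as to descend to $S^N x$. This holds because the defining sum ranges symmetrically over all subsets, the multiplication $\mu$ is commutative with respect to the braiding, and each $\delta_{|I|}$ entering it is already invariant, so the braidings needed to reorder inputs are absorbed. Assembling the $\delta_N$ yields $\Delta\colon Sx\to Sx$, and $\Delta\circ\mathrm{can}=\phi$ holds by construction in degrees $\leq k$.

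It remains to verify that $\Delta$ is genuinely of order $k$, i.e.\ that $T^{k+1}(\Delta)=0$. By Proposition \ref{prop:diffop-condition} this is an explicit identity among maps $(Sx)^{\otimes(k+1)}\to Sx$; since $(Sx)^{\otimes(k+1)}$ is a colimit of the objects $x^{\otimes a_1}\otimes\cdots\otimes x^{\otimes a_{k+1}}$, it suffices to test the identity after precomposing with the corresponding products of generators. On such a generator $T^{k+1}(\Delta)$ unwinds, via the explicit form of Lemma \ref{lem:T^(f)}, into an alternating subset-sum of the values $\delta_{n_I}$ with $n_I=\sum_{i\in I}a_i$, and the required vanishing becomes a purely combinatorial identity relating these values: the higher-Leibniz expansion used to define the top components must remain consistent when the $k+1$ inputs are themselves products of generators.

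This consistency is the main obstacle, and it is exactly the combinatorial content of Appendix \ref{Appendix A} (the proof of Proposition \ref{prop:explicit-diffop}) run in the reverse direction: there the order-$k$ hypothesis is used to \emph{derive} the closed formula, whereas here I must check that adopting the closed formula as a definition forces the order-$k$ condition back, including on grouped inputs and not merely on the $k+1$ singleton generators where it holds tautologically. I expect to streamline this using Lemma \ref{lem:precompose-by-u}, which lets one collapse inputs to the unit $u$ and thereby reduce the number of nontrivial blocks that must be examined, after which the identity follows by an induction on $N=\sum_i a_i$ parallel to the one proving Proposition \ref{prop:explicit-diffop}. Granting this, $\Delta\in\mathrm{D}_{k}(Sx)$, surjectivity of $\mathrm{can}^*$ follows, and together with injectivity this gives the isomorphism $\mathrm{D}_{k}(Sx)\isoto\mathcal{C}(S^{\leq k}x,Sx)$.
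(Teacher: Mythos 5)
Your overall architecture matches the paper's: uniqueness via Proposition \ref{prop:generators-diffops} applied to $x \cong S^1x \to Sx$, existence by promoting the closed formula of Proposition \ref{prop:explicit-diffop} to a \emph{definition} of $\Delta$ in polynomial degrees $\geq k+1$, and then verification of the order-$k$ condition componentwise on $S^{N_1}x \otimes \cdots \otimes S^{N_{k+1}}x$ via Proposition \ref{prop:diffop-condition}. Your side remarks are also sound: the descent of each $\delta_N$ to $S^Nx$ works for the symmetry reasons you give, and the order condition does hold tautologically when all $k+1$ inputs are single generators.

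However, there is a genuine gap exactly where you flag ``the main obstacle'': you never prove the combinatorial identity that makes the verification go through on grouped inputs, and your sketch of how to get it is not accurate. After expanding $T^{k+1}(\Delta)\circ(\mathrm{id}\otimes u)$ on a component of multidegree $(N_1,\dots,N_{k+1})$ and substituting the defining formula for $\Delta$ on each product $\prod_{i\in I}$, one must exchange the order of summation and show that, for every subset $J \subset \mathscr{B} := \bigsqcup_i \{1,\dots,N_i\}$ with $|J| \leq k$, the coefficient
\begin{equation*}
    \sum_{\substack{I \subset \{1,\dots,k+1\}\\ \mathscr{B}_I \supset J}} (-1)^{|I|}\binom{|\mathscr{B}_I| - |J| - 1}{k - |J|}
\end{equation*}
vanishes. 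This is not ``Appendix \ref{Appendix A} run in reverse'': the proof of Proposition \ref{prop:explicit-diffop} is a direct binomial computation (no induction on $N$), and the identity above is a genuinely different statement, which the paper isolates as Lemma \ref{lem:for-!existence-of-diffops} and proves by an inclusion--exclusion argument (counting $k$-element subsets of $\mathscr{B}$ containing $J$ and missing some block $\mathscr{B}_i$) combined with Pascal's identity and an induction on $k$, with careful case analysis and the conventions $\binom{-1}{\beta} = (-1)^\beta$, $\binom{\alpha}{\beta}=0$ for $\beta > \alpha \geq 0$. Your proposed induction on $N = \sum_i a_i$, together with an appeal to Lemma \ref{lem:precompose-by-u} to ``collapse inputs,'' does not obviously produce this identity (Lemma \ref{lem:precompose-by-u} is already consumed in reducing to the condition of Proposition \ref{prop:diffop-condition}; it gives no further reduction among the blocks). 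Since everything else in your argument is routine once this identity is available, supplying a correct proof of it is precisely the missing content of the theorem.
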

\begin{proof}
    We have uniqueness by Proposition \ref{prop:generators-diffops}, applied to the natural map $x\cong S^1 x \to Sx$ (whose associated map $Sx \to Sx$ is an isomorphism, hence epic). It remains to show existence. Defining $\Delta$ amounts to defining maps $S^nx \to Sx$ for each $n$. Define $\Delta$ to agree with $\phi$ on $S^n x$ for $n \leq k$. Meanwhile, as suggested by Proposition \ref{prop:explicit-diffop}, for $N \geq k+1$ we define
    \begin{equation}\label{eqn:002}
        \vcenter{\hbox{\includegraphics[scale=1]{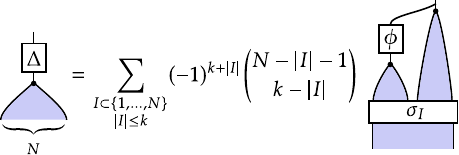}}}\;,
    \end{equation}
    where each of the $N$ strands at the bottom denotes the object $x \cong S^1 x$, and the dots denote projections $x^{\otimes n} \twoheadrightarrow S^n x$ (which we can simply think of as multiplication $(S^1 x)^{\otimes n} \to S^n x$). Notice that we can (and will) use \eqref{eqn:002} for \textit{all} $N \geq 0$, provided that $\binom{\alpha}{\beta} := 0$ for $\beta > \alpha \geq 0$, and that $\binom{-1}{\beta} := (-1)^{\beta}$ for $\beta \geq 0$. 
    
    It remains to show that $\Delta$ is a differential operator of order $k$, i.e.\ that $T^{k+1}(\Delta) \circ (\mathrm{id} \otimes u) = 0$ as a map $(Sx)^{\otimes {k+1}} \to Sx$. Equivalently, that the components $S^{N_1}x \otimes \cdots \otimes S^{N_{k+1}}x \to Sx$ of this map are 0 for all $N_1,\dots,N_{k+1} \geq 0$, which by Proposition \ref{prop:diffop-condition} says that
    \begin{equation}\label{eqn:003}
        \vcenter{\hbox{\includegraphics[scale=1]{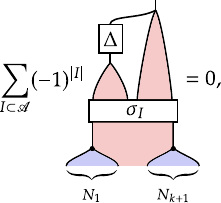}}}\;
    \end{equation}
    where we have denoted $\{1,\dots,k+1\}$ by $\mathscr{A}$. Before showing that \eqref{eqn:003} holds, we introduce some notation to keep track of indexing sets: For each $i \in \mathscr{A}$, let $\mathscr{B}_i := \{1,\dots,N_i\}$, and, for each subset $I \subset \mathscr{A}$, define $\mathscr{B}_I := \bigsqcup_{i\in I}\mathscr{B}_i$. We write $\mathscr{B} := \mathscr{B}_{\mathscr{A}}$ for the full indexing set (with the natural total order). Notice that the left hand side of \eqref{eqn:003} can be rewritten as
    \begin{equation*}
        \vcenter{\hbox{\includegraphics[scale=1]{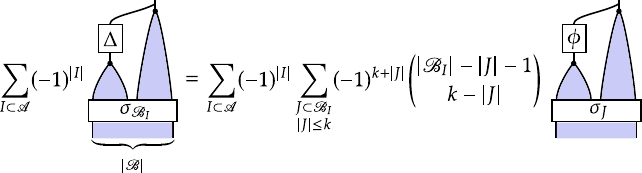}}}\;.
    \end{equation*}
    Changing the order of summation, this is nothing but
    \begin{equation*}
        \vcenter{\hbox{\includegraphics[scale=1]{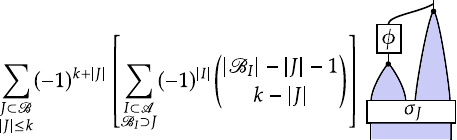}}}\;.
    \end{equation*}
    The term inside the square brackets vanishes for each $J$ --- see Lemma \ref{lem:for-!existence-of-diffops} in Appendix \ref{Appendix A} --- showing that \eqref{eqn:003} holds.
\end{proof}

A \textbf{derivation} is a 1st order differential operator that kills the unit. The $R$-module of derivations on $a$ is denoted $\mathrm{Der}(a)$ and is a Lie subalgebra of $\mathcal{C}(a,a)$ by Proposition \ref{prop:comm-of-diffops}. When $\mathcal{C}$ is $\otimes$-cocomplete, Theorem \ref{thm:!existence-of-diffops} shows that $\mathrm{Der}(Sx) \cong \mathcal{C}(x,Sx)$.

\begin{prop}\label{prop:diffops-to-diffops}
    Let $F:\mathcal{C} \to \mathcal{D}$ be a linear lax symmetric monoidal functor between linear SMCs, and let $a \in \mathcal{C}$ be a commutative monoid. The map $\mathcal{C}(a,a) \to \mathcal{D}(F(a),F(a))$ restricts to a filtered algebra morphism
    \begin{equation*}
        \mathrm{D}(a) \to \mathrm{D}(F(a)),
    \end{equation*}
    where $F(a)$ has the canonical commutative monoid structure.
\end{prop}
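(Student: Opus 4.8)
The plan is to separate the statement into two parts: first, that the functorial assignment $F\colon \mathcal{C}(a,a)\to\mathcal{D}(F(a),F(a))$ sends a differential operator of order $k$ to one of order $k$; and second, that the induced map $\mathrm{D}(a)\to\mathrm{D}(F(a))$ respects the algebra structure. The second part is essentially formal and I would dispatch it last: since $F$ is $R$-linear on hom-modules and functorial, we have $F(\Delta\circ\nabla)=F(\Delta)\circ F(\nabla)$ and $F(\mathrm{id}_a)=\mathrm{id}_{F(a)}$, so once we know the map preserves the order filtration it is automatically a morphism of filtered $R$-algebras. Thus all the content lies in preservation of the differential-operator condition and of order.

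To set up the first part, recall that $F$ lax symmetric monoidal comes with coherent natural structure maps $J_{b,c}\colon F(b)\otimes F(c)\to F(b\otimes c)$ and $J_0\colon 1\to F(1)$, whose iterates I write $J_m\colon F(a)^{\otimes m}\to F(a^{\otimes m})$ (with $J_1=\mathrm{id}_{F(a)}$), and that the induced commutative monoid $F(a)$ has multiplication $F(\mu)\circ J_{a,a}$ and unit $F(u)\circ J_0$. The essential observation is that the operator $T$ (defined by Diagram 005) is built solely from the multiplication $\mu$, identities, the symmetry, composition, tensor product, and an $R$-linear difference --- exactly the data that a linear lax symmetric monoidal functor intertwines through its structure maps. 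Since we only ever iterate $T$ on morphisms with target $a$ (the operators $T^N(\Delta)$ of Lemma \ref{lem:T^(f)} land in $\mathcal{C}(a^{\otimes(N+1)},a)$), only the domain-side maps $J_m$ are needed, which sidesteps the fact that a merely lax functor has no comparison maps $F(a^{\otimes n})\to F(a)^{\otimes n}$. Accordingly I would define $\tilde F_m(f):=F(f)\circ J_m$ for $f\colon a^{\otimes m}\to a$, noting $\tilde F_1=F$, and prove the single-step compatibility
\begin{equation*}
    T\bigl(\tilde F_m(f)\bigr)=\tilde F_{m+1}\bigl(T(f)\bigr),
\end{equation*}
where $T$ on the left is computed in $\mathcal{D}$ and on the right in $\mathcal{C}$.

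Granting this, a short induction on $N$ gives $T^N(F(\Delta))=\tilde F_{N+1}(T^N(\Delta))$ for all $N\geq 0$ (base case $N=0$ being $\tilde F_1=F$, inductive step the compatibility above). Hence if $\Delta\in\mathrm{D}_k(a)$, so that $T^{k+1}(\Delta)=0$ by Definition \ref{defn:diffop}, then $T^{k+1}(F(\Delta))=\tilde F_{k+2}(0)=F(0)\circ J_{k+2}=0$, whence $F(\Delta)\in\mathrm{D}_k(F(a))$. This shows $F$ carries $\mathrm{D}_k(a)$ into $\mathrm{D}_k(F(a))$, i.e.\ the restricted map preserves the filtration, and combined with the formal remarks above completes the proof. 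As an alternative to the single-step claim, one may instead apply $F$ directly to the closed formula for $T^N(\Delta)$ furnished by Lemma \ref{lem:T^(f)} and compare it termwise with the corresponding formula computed in $\mathcal{D}$; each iterated multiplication in $\mathcal{C}$ becomes the iterated multiplication of $F(a)$ precisely after inserting $J_{N+1}$.

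The main obstacle is the single-step compatibility (equivalently, the termwise matching of the two instances of Lemma \ref{lem:T^(f)}): it is a coherence diagram chase in which one pushes $F$ past each constituent of $T$, using $F(\mu)\circ J_{a,a}=\mu_{F(a)}$, naturality of $J$, $R$-linearity of $F$ on hom-modules, and --- the genuinely delicate points --- the compatibility of $J$ with the associator and with the symmetry (so that the shuffle/symmetry factors occurring in $T$ are transported correctly), together with the handling of the unit via $J_0$. None of these steps is hard individually, but the bookkeeping of structure maps is where care is required.
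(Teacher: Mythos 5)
Your proposal is correct and takes essentially the same route as the paper: your maps $\tilde F_m(f)=F(f)\circ J_m$ and the single-step compatibility $T(\tilde F_m(f))=\tilde F_{m+1}(T(f))$ are exactly the paper's commutative ladder diagram, whose vertical maps are ``apply $F$, then pull back along the canonical maps $F(a)^{\otimes m}\to F(a^{\otimes m})$,'' with the same observation that this map is the identity for $m=1$. Both arguments also leave the underlying coherence chase unspelled and dispatch the filtered-algebra-morphism part by functoriality and linearity alone.
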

\begin{proof}
    It suffices to show that $f \in \mathrm{D}_{k}(a) \implies F(f) \in \mathrm{D}_{k}(F(a))$ for all $k \geq 0$. Writing $\eta:1_{\mathcal{D}} \to F(1_{\mathcal{C}})$ and $\left(J_{x,y}:F(x) \otimes F(y) \to F(x \otimes y)\right)_{x,y \in \mathcal{C}}$ for the structure maps of $F$, recall that the multiplication and unit on $F(a)$ are given by $F(\mu) \circ J_{a,a}$ and $F(u) \circ \eta$, where $\mu$ and $u$ denote the multiplication and unit on $a$. It can then be shown that the following diagram commutes for all $m \geq 1$:
    \begin{equation*}
        % https://q.uiver.app/#q=WzAsNixbMCwwLCJcXG1hdGhjYWx7Q30oYV57XFxvdGltZXMgbX0sYSkiXSxbMSwwLCJcXG1hdGhjYWx7RH0oRihhXntcXG90aW1lcyBtfSksRihhKSkiXSxbMiwwLCJcXG1hdGhjYWx7RH0oRihhKV57XFxvdGltZXMgbX0sRihhKSkiXSxbMCwxLCJcXG1hdGhjYWx7Q30oYV57XFxvdGltZXMgKG0rMSl9LGEpIl0sWzEsMSwiXFxtYXRoY2Fse0R9KEYoYV57XFxvdGltZXMgKG0rMSl9KSxGKGEpKSJdLFsyLDEsIlxcbWF0aGNhbHtEfShGKGEpXntcXG90aW1lcyAobSsxKX0sRihhKSkiXSxbMCwxLCJGIl0sWzEsMiwiSl4qIl0sWzMsNCwiRiJdLFs0LDUsIkpeKiJdLFswLDMsIlQiLDJdLFsyLDUsIlQiXV0=
        \begin{tikzcd}
        	{\mathcal{C}(a^{\otimes m},a)} & {\mathcal{D}(F(a^{\otimes m}),F(a))} & {\mathcal{D}(F(a)^{\otimes m},F(a))} \\
        	{\mathcal{C}(a^{\otimes (m+1)},a)} & {\mathcal{D}(F(a^{\otimes (m+1)}),F(a))} & {\mathcal{D}(F(a)^{\otimes (m+1)},F(a))}
        	\arrow["F", from=1-1, to=1-2]
        	\arrow["T"', from=1-1, to=2-1]
        	\arrow[from=1-2, to=1-3]
        	\arrow["T", from=1-3, to=2-3]
        	\arrow["F", from=2-1, to=2-2]
        	\arrow[from=2-2, to=2-3]
        \end{tikzcd},
    \end{equation*}
    where the rightmost horizontal maps are pullbacks along the canonical maps $F(a)^{\otimes m} \to F(a^{\otimes m})$ induced by $J$. For $m=1$ this canonical map is identity, so we obtain a commutative diagram
    % https://q.uiver.app/#q=WzAsOCxbMCwwLCJcXG1hdGhjYWx7Q30oYSxhKSJdLFswLDEsIlxcbWF0aGNhbHtEfShGKGEpLEYoYSkpIl0sWzEsMCwiXFxtYXRoY2Fse0N9KGFee1xcb3RpbWVzIDJ9LGEpIl0sWzIsMCwiKGFee1xcb3RpbWVzIDN9LGEpIl0sWzMsMCwiXFxjZG90cyJdLFsxLDEsIlxcbWF0aGNhbHtEfShGKGEpXntcXG90aW1lcyAyfSxGKGEpKSJdLFsyLDEsIlxcbWF0aGNhbHtEfShGKGEpXntcXG90aW1lcyAzfSxGKGEpKSJdLFszLDEsIlxcY2RvdHMiXSxbMCwxLCJGIiwyXSxbMCwyLCJUIl0sWzIsMywiVCJdLFszLDRdLFsxLDUsIlQiLDJdLFs1LDYsIlQiLDJdLFs2LDddLFsyLDVdLFszLDZdXQ==
    \begin{equation*}
        \begin{tikzcd}
        	{\mathcal{C}(a,a)} & {\mathcal{C}(a^{\otimes 2},a)} & {\mathcal{C}(a^{\otimes 3},a)} & \cdots \\
        	{\mathcal{D}(F(a),F(a))} & {\mathcal{D}(F(a)^{\otimes 2},F(a))} & {\mathcal{D}(F(a)^{\otimes 3},F(a))} & \cdots
        	\arrow["T", from=1-1, to=1-2]
        	\arrow["F"', from=1-1, to=2-1]
        	\arrow["T", from=1-2, to=1-3]
        	\arrow[from=1-2, to=2-2]
        	\arrow[from=1-3, to=1-4]
        	\arrow[from=1-3, to=2-3]
        	\arrow["T", from=2-1, to=2-2]
        	\arrow["T", from=2-2, to=2-3]
        	\arrow[from=2-3, to=2-4]
        \end{tikzcd},
    \end{equation*}
    from which the result follows.
\end{proof}

\begin{expl}
    The linear functor $\mathcal{C}(1,-):\mathcal{C} \to \mathrm{mod}_R$ taking objects to their $1$-points is canonically lax symmetric monoidal, so the above proposition applies here.
\end{expl}

\subsection{Bi-differential operators}\label{sec:bi-differential operators}
The language of \S\ref{sec:Diffops} can be easily extended to allow one to consider maps $a \otimes a \to a$ that are `differential operators in each variable'. Namely, one considers two families of $R$-linear maps:
\begin{align*}
    &T_1:\mathcal{C}(a^{\otimes m_1} \otimes a^{\otimes m_2},a^{\otimes n}) \to \mathcal{C}(a^{\otimes (m_1+1)} \otimes a^{\otimes m_2},a^{\otimes n}); \; \text{and}\\
    &T_2:\mathcal{C}(a^{\otimes m_1} \otimes a^{\otimes m_2},a^{\otimes n}) \to \mathcal{C}(a^{\otimes m_1} \otimes a^{\otimes (m_2 + 1)},a^{\otimes n}),
\end{align*}
declaring that $\Delta \in \mathcal{C}(a \otimes a,a)$ is a \textbf{bi-differential operator} of \textbf{order} $(k_1,k_2)$ if $T_1^{k_1 + 1}(\Delta) = 0$ and $T_2^{k_2 + 1}(\Delta) = 0$. Of course, one has the notion of a \textbf{bi-derivation}, which is an order $(1,1)$ bi-differential operator $\Delta$ such that $\Delta \circ (\mathrm{id}_a \otimes u) = \Delta \circ (u \otimes \mathrm{id}_a) = 0$. When $\mathcal{C}$ is  is $\otimes$-cocomplete, bi-differential operators on a free commutative monoid $Sx$ are easy: those with order $(k_1,k_2)$ are in bijection with $\mathcal{C}(S^{\leq k_1}x \otimes S^{\leq k_2}x , Sx)$.

\begin{defn}
    A \textbf{Poisson algebra in $\mathcal{C}$} is a commutative monoid $a$ in $\mathcal{C}$ together with a bi-derivation $\{-,-\}:a \otimes a \to a$ (called a \textbf{Poisson bracket}) that is simultaneously a Lie bracket, meaning that it satisfies the following two properties:
    \begin{enumerate}
        \item \textit{Antisymmetry}. $\{-,-\} \circ \sigma_{a,a} = -\{-,-\}$; and
        \item\label{itm:007} \textit{Jacobi identity}. $\{-,\{-,-\}\} \circ \mathfrak{S}_3 = 0$, where $\{-,\{-,-\}\}$ is shorthand for $\{-,-\} \circ (\mathrm{id}_a \otimes \{-,-\})$ and $\mathfrak{S}_3$ is the cyclic symmetriser
        \begin{equation*}
            \includegraphics[scale=1]{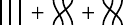}\;.
        \end{equation*}
    \end{enumerate}
\end{defn}
It is easy to check that if $\{-,-\}$ is an antisymmetric bi-derivation, then the map $\{-,\{-,-\}\} \circ \mathfrak{S}_3: a\otimes a\otimes a \to a$ is a \textit{tri-derivation}. In particular, if $\mathcal{C}$ is $\otimes$-cocomplete and $a = Sx$, then to show Property \ref{itm:007} it suffices to show that $\{-,\{-,-\}\} \circ \mathfrak{S}_3$ vanishes on $x \otimes x \otimes x$, by the appropriate analogue of Theorem \ref{thm:!existence-of-diffops}. This observation will be useful for us when we construct Poisson algebras later.

Finally, by an analogue of Proposition \ref{prop:diffops-to-diffops}, one sees that linear lax symmetric monoidal functors take Poisson algebras to Poisson algebras.

\subsection{Linear SMCs with parity structure; BV algebras}
With the goal of discussing \textit{BV algebras} in $\mathcal{C}$, we need $\mathcal{C}$ to admit a workable theory of even and odd maps. So far we have taken $\mathcal{C}$ to be an $R$-linear symmetric monoidal category, perhaps with some nice properties. Our first guess would be to simply replace the enriching category $\mathrm{mod}_R$ with the category $\mathrm{smod}_R$ of super $R$-modules, in which case $\mathcal{C}$ would be called a \textit{symmetric monoidal supercategory}. But this is not enough --- we also need a notion of \textit{parity-inversion}, in this context arising as the $\mathrm{smod}_R$-functor (i.e.\ \textit{superfunctor}) $\pi \otimes - : \mathcal{C} \to \mathcal{C}$ for $\pi \in \mathcal{C}$ an object equipped with an odd isomorphism $\pi \isoto 1$. With such data, $\mathcal{C}$ would be a \textit{symmetric monoidal $\Pi$-supercategory}. 

In \cite{brundan2017monoidal} it was shown that the $2$-category of monoidal $\Pi$-supercategories is equivalent to the $2$-category of \textit{monoidal $\Pi$-categories}, which are $R$-linear monoidal categories $\mathcal{C}$ together with an object $(\pi,\beta) \in Z(\mathcal{C})$ in the Drinfeld centre satisfying certain axioms. In particular, $\mathrm{smod}_R$-enrichment comes for free. As such, we use the following definition, which is a specialisation of the definition of monoidal $\Pi$-categories in \cite{brundan2017monoidal}.
\begin{defn}
    Given an $R$-linear symmetric monoidal category $\mathcal{C}$, a \textbf{parity structure on $\mathcal{C}$} is a pair $(\pi,\xi)$ consisting of an object $\pi \in \mathcal{C}$ such that $\sigma_{\pi,\pi} = -\mathrm{id}_{\pi \otimes \pi}$, together with an isomorphism $\xi: \pi \otimes \pi \isoto 1$. In this context the functor $\pi \otimes - : \mathcal{C} \to \mathcal{C}$ is denoted $\Pi$. Equipped with a choice of parity structure, $\mathcal{C}$ is called a \textbf{symmetric monoidal $\Pi$-category}.
\end{defn}
\begin{rem}\label{rem:dualisable?}
    The isomorphism $\pi \otimes \pi \isoto 1$ exhibits $\pi$ as a self-inverse object in $\mathcal{C}$, equivalently, as a self-inverse equivalence in the delooping $B\mathcal{C}$. Since equivalences in $2$-categories can be promoted to \textit{adjoint} equivalences by redefining the invertible structure 2-morphisms, it follows that $\pi$ is self-dual in $\mathcal{C}$, even though $(\xi,\xi^{-1})$ may not be duality data.
\end{rem}
One can talk about structure-preserving functors between linear SMCs with parity structure.
\begin{defn}
    Let $(\mathcal{C},\pi,\xi)$ and $(\mathcal{D},\tau,\zeta)$ be symmetric monoidal $\Pi$-categories. A \textbf{lax symmetric monoidal $\Pi$-functor}\footnote{The adjective `lax' is dropped in \cite{brundan2017monoidal}, where it is understood implicitly.} $\mathcal{C} \to \mathcal{D}$ is a linear lax symmetric monoidal functor $F$ together with a map $\omega:\tau \to F(\pi)$ in $\mathcal{D}$ such that the following diagram commutes:
    % https://q.uiver.app/#q=WzAsNSxbMCwwLCJcXHRhdSBcXG90aW1lcyBcXHRhdSJdLFswLDEsIkYoXFxwaSlcXG90aW1lcyBGKFxccGkpIl0sWzIsMSwiRihcXHBpIFxcb3RpbWVzIFxccGkpIl0sWzIsMCwiRigxKSJdLFsxLDAsIjEiXSxbMCwxLCJcXG9tZWdhIFxcb3RpbWVzIFxcb21lZ2EiLDJdLFsxLDIsIkpfe1xccGksXFxwaX0iXSxbMiwzLCJGKFxceGkpIiwyXSxbMCw0LCJcXHpldGEiXSxbNCwzLCJcXGV0YSJdXQ==
    \begin{equation*}
        \begin{tikzcd}
        	{\tau \otimes \tau} & 1 & {F(1)} \\
        	{F(\pi)\otimes F(\pi)} && {F(\pi \otimes \pi)}
        	\arrow["\zeta", from=1-1, to=1-2]
        	\arrow["{\omega \otimes \omega}"', from=1-1, to=2-1]
        	\arrow["\eta", from=1-2, to=1-3]
        	\arrow["{J_{\pi,\pi}}", from=2-1, to=2-3]
        	\arrow["{F(\xi)}"', from=2-3, to=1-3]
        \end{tikzcd},
    \end{equation*}
    where $\eta$ and $J$ are the coherence maps of $F$. Together with composition $(G,\omega_G) \circ (F, \omega_F) := (G \circ F, G(\omega_F) \circ \omega_G)$ and the obvious identity maps, we get a category $\mathrm{SymMon\Pi\text{-}Cat}_R$.
\end{defn}

\begin{constr}\label{constr:supercat}
    Let $(\mathcal{C},\pi,\xi) \in \mathrm{SymMon\Pi\text{-}Cat}_R$. Given the parity structure, one can define a notion of odd morphisms, obtaining a symmetric monoidal supercategory $\underline{\mathcal{C}} \in \mathrm{SymMonCat}_{\mathrm{smod}_R}$ as follows: 
    \begin{itemize}
        \item $\mathrm{ob}\,\underline{\mathcal{C}} := \mathrm{ob}\,\mathcal{C}$.
        
        \item $\underline{\mathcal{C}}(x,y)  \in \mathrm{smod}_R$ is defined to have even part $\mathcal{C}(x,y)$ and odd part $\mathcal{C}(\pi \otimes x,y)$. That is, a (homogeneous) map $f : x \to y$ in $\underline{\mathcal{C}}$ is a map $\pi^{|f|} \otimes x \to y$ in $\mathcal{C}$, where $|f|$ denotes the parity of $f$.
        
        \item Composition in $\underline{\mathcal{C}}$ is given by the following maps in $\mathrm{smod}_R$:
        \begin{equation*}
            \underline{\mathcal{C}}(y,z) \otimes \underline{\mathcal{C}}(x,y) \to \underline{\mathcal{C}}(x,z);\quad
            g \otimes f \mapsto \left[\pi^{|g| + |f|} \otimes x \cong \pi^{|g|}\otimes \pi^{|f|}\otimes x \stackrel{\mathrm{id} \otimes f}{\longrightarrow} \pi^{|g|} \otimes y \stackrel{g}{\to} z \right],
        \end{equation*}
        where the isomorphism uses $\xi$ (or $\xi^{-1}$) --- from now on we leave it implicit. \textit{Note: showing associativity of composition uses $\sigma_{\pi,\pi} = - \mathrm{id}_{\pi \otimes \pi}$.}

        \item The identity map on $x$ in $\underline{\mathcal{C}}$ is simply the identity map in $\mathcal{C}$, so there is no ambiguity writing `$\mathrm{id}_x$'.
        \item $\otimes_{\underline{\mathcal{C}}}$ is the $\mathrm{smod}_R$-functor $\underline{\mathcal{C}} \otimes \underline{\mathcal{C}} \to \underline{\mathcal{C}}$ that on objects agrees with the monoidal product of $\mathcal{C}$, while for morphism spaces it is given by the following maps in $\mathrm{smod}_R$:
        \begin{align*}
            \underline{\mathcal{C}}(x,y) \otimes \underline{\mathcal{C}}(x',y') &\to \underline{\mathcal{C}}(x \otimes x', y \otimes y');\\
            f \otimes f' &\mapsto \left[ \pi^{|f|+|f'|} \otimes x \otimes x' \isoto \pi^{|f|} \otimes x \otimes \pi^{|f'|} \otimes x' \stackrel{f \otimes f'}{\longrightarrow} y \otimes y'\right],
        \end{align*}
        where the isomorphism uses the symmetry $\sigma$ of $\mathcal{C}$.
        \textit{Note: $\sigma_{\pi,\pi} = - \mathrm{id}_{\pi \otimes \pi}$ is crucial for $\otimes_{\underline{\mathcal{C}}}$ to respect composition!}
        
        \item The unit object in $\underline{\mathcal{C}}$ is simply the unit object in $\mathcal{C}$ --- so there is no ambiguity writing `$1$'.

        \item The symmetry on $\underline{\mathcal{C}}$ is essentially the symmetry $\sigma$ on $\mathcal{C}$: it is the $\mathrm{smod}_R$-natural transformation whose components are the even maps $\sigma_{x,y} \in \mathcal{C}(x \otimes y, y \otimes x)$.
    \end{itemize}
    Furthermore, given a functor $(F.\omega):(\mathcal{C},\pi,\xi) \to (\mathcal{D},\tau,\zeta)$ in $\mathrm{SymMon\Pi\text{-}Cat}_R$, we get a lax symmetric monoidal $\mathrm{smod}_R$-functor $\underline{F}:\underline{\mathcal{C}} \to \underline{\mathcal{D}}$:
    
    \begin{itemize}
        \item $\underline{F}(x) = F(x)$.
        \item $\underline{\mathcal{C}}(x,y) \to \underline{\mathcal{D}}(\underline{F}(x),\underline{F}(y))$ is just $F$ on even maps, while on odd maps it is
        \begin{align*}
            \mathcal{C}(\pi \otimes x,y) \to \mathcal{D}(\tau \otimes F(x),F(y)); \quad 
            f \mapsto \left[\tau \otimes F(x) \stackrel{\omega \otimes \mathrm{id}}{\longrightarrow} F(\pi) \otimes F(x) \stackrel{J_{\pi,x}}{\longrightarrow} F(\pi \otimes x) \stackrel{F(f)}{\to} F(y) \right].
        \end{align*}
        \item The coherence maps for $\underline{F}$ are simply those for $F$, viewed as even maps.
    \end{itemize}
    All considered, this gives a functor 
    \begin{equation*}
        (\,\underline{\;\;}\,):\mathrm{SymMon\Pi\text{-}Cat}_R \to \mathrm{SymMonCat}_{\mathrm{smod}_R}.
    \end{equation*}
\end{constr}
We will later need symmetric monoidal $\Pi$-categories that are $\otimes$-cocomplete. The following construction shows that they are plentiful.
\begin{constr}\label{constr:addmon-to-pimon}
    Given an \textit{additive} $R$-linear symmetric monoidal category $\mathcal{C}$ (in particular, $\otimes$ is an additive functor), one can construct its category of `super objects' $\mathcal{SC} \in \mathrm{SymMon\Pi\text{-}Cat}_R$ in the following way:
    \begin{itemize}
        \item As an $R$-linear category, $\mathcal{SC}$ has objects that are pairs $x = (x_0,x_1)$ of objects of $\mathcal{C}$, and mapping spaces are $\mathcal{SC}(x,y) = \mathcal{C}(x_0,y_0) \oplus \mathcal{C}(x_1,y_1)$, with component-wise composition.

        \item $\otimes_{\mathcal{SC}}:\mathcal{SC} \times \mathcal{SC} \to \mathcal{SC}$ is defined on objects by
        \begin{equation*}
            x\otimes_{\mathcal{SC}} y = ((x_0 \otimes y_0) \oplus (x_1 \otimes y_1), (x_0 \otimes y_1) \oplus (x_1 \otimes y_0)),
        \end{equation*}
        and on maps in the obvious way.

        \item The unit object in $\mathcal{SC}$ is $(1,0)$.

        \item The components of the symmetry on $\mathcal{SC}$ are the linear maps $(x \otimes_{\mathcal{SC}} y \to y \otimes_\mathcal{SC} x)_{x,y \in \mathcal{SC}}$ given by 
        \begin{equation}\label{eqn:018}
            (-1)^{ij} \sigma_{x_i, y_j} : x_i \otimes y_j \to y_j \otimes x_i, \quad \text{for $i,j = 0,1$,}
        \end{equation}
        where $\sigma$ denotes the symmetry on $\mathcal{C}$.
        \item The parity structure is $\pi = (0,1)$, with $\xi: \pi \otimes_{\mathcal{SC}} \pi \isoto 1_{\mathcal{SC}}$ the obvious isomorphism.
    \end{itemize}
    Notice that $\mathcal{SC}$ is itself additive. Furthermore, if $\mathcal{C}$ has \textit{all} colimits with $\otimes$ colimit-preserving, then $\mathcal{SC}$ has all colimits with $\otimes_{\mathcal{SC}}$ colimit-preserving too. Similarly, $\mathcal{SC}$ is closed if $\mathcal{C}$ is.
    
    Given an additive $R$-linear lax symmetric monoidal functor $F:\mathcal{C} \to \mathcal{D}$, we get a lax symmetric monoidal $\Pi$-functor $\mathcal{S}F:\mathcal{SC} \to \mathcal{SD}$ in the natural way. All considered, have a functor
    \begin{equation*}
        \mathcal{S}:\mathrm{AddSymMonCat}_R \to \mathrm{SymMon\Pi\text{-}Cat}_R.
    \end{equation*}
\end{constr}
\begin{rem}
    The enriching category $\mathrm{smod}_R$ (viewed as a $\Pi$-category with parity structure $\pi = R_{\text{(odd)}}$) arises in this way: $\mathrm{smod}_R \simeq \mathcal{S}\mathrm{mod}_R$.
\end{rem}

For the rest of this subsection, let $\mathcal{C} \in \mathrm{SymMon\Pi\text{-}Cat}_R$ have parity structure $\pi$, and let $a \in \mathcal{C}$ be a commutative monoid. Via the category $\underline{\mathcal{C}}$, which permits us to speak of odd maps as well as even ones, we can speak of even and odd differential operators: in analogy with before, for all $m,n \geq 1$, consider the $\mathrm{smod}_R$-maps $\underline{\mathcal{C}}(a^{\otimes m},a^{\otimes n}) \to \underline{\mathcal{C}}(a^{\otimes (m+1)},a^{\otimes n})$ given by
\begin{equation*}
    \vcenter{\hbox{\includegraphics[scale=1]{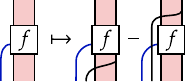}}}\;,
\end{equation*}
where the blue strand depicts $\pi^{|f|}$ (here $f$ is taken to be homogeneous). This gives a sequence of maps
\begin{equation}\label{eqn:application-of-t-graded}
    \underline{\mathcal{C}}(a,a) \to \underline{\mathcal{C}}(a \otimes a ,a) \to \underline{\mathcal{C}}(a \otimes a \otimes a,a) \to \cdots
\end{equation}
which, just as in Definition \ref{defn:diffop}, allows us to define a sub-supermodule $\underline{\mathrm{D}}(a) \subset \underline{\mathcal{C}}(a,a)$ of differential operators in the graded setting, with filtration by order
\begin{equation*}
    \underline{\mathrm{D}}_0(a) \subset \underline{\mathrm{D}}_1(a) \subset \underline{\mathrm{D}}_2(a) \subset \dots \subset \underline{\mathrm{D}}(a).
\end{equation*}
The results of \S\ref{sec:Diffops} hold in this setting, modified appropriately. For example:
\begin{prop}\label{prop:graded-comm-of-diffops}
    Let $\Delta \in \underline{\mathrm{D}}_k(a)$ and $\nabla \in \underline{\mathrm{D}}_l(a)$. Then $\Delta \circ \nabla \in \underline{\mathrm{D}}_{k+l}(a)$. Thus, $\underline{\mathrm{D}}(a)$ is a filtered sub-superalgebra of $\underline{\mathcal{C}}(a,a)$. Furthermore, $[\Delta,\nabla] \in \underline{\mathrm{D}}_{k+l-1}(a)$, where $[-,-]$ denotes the \textit{graded} commutator.
\end{prop}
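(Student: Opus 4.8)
The plan is to follow the proof of Proposition~\ref{prop:comm-of-diffops} line by line, the only genuinely new ingredient being the Koszul signs that appear in $\underline{\mathcal{C}}$ whenever the parity strand $\pi^{|f|}$ of a homogeneous map crosses an $a$-strand. First I would establish the graded analogue of Lemma~\ref{lem:T-on-comp}: for homogeneous $f:a^{\otimes m}\to a^{\otimes n}$ and $g:a^{\otimes n}\to a^{\otimes p}$ in $\underline{\mathcal{C}}$ and all $N\geq 0$,
\begin{equation*}
    T^N(g\circ f)=\sum_{I\subset\{1,\dots,N\}} T^{|I|}(g)\circ\bigl(\mathrm{id}_{a^{\otimes|I|}}\otimes T^{N-|I|}(f)\bigr)\circ\bigl(\sigma_I\otimes\mathrm{id}_{a^{\otimes m}}\bigr),
\end{equation*}
where $\sigma_I$ is now the shuffle read in $\underline{\mathcal{C}}$, so that moving the blue strand $\pi^{|f|}$ past the freshly inserted $a$-strands contributes signs dictated by the symmetry of $\underline{\mathcal{C}}$. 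The proof is the same induction on $N$ as in the ungraded case --- trivial base case, inductive step by one more application of $T$ --- carried out diagrammatically in $\underline{\mathcal{C}}$; the only change is that each crossing is now weighted by its Koszul sign.

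Granting this, the first claim is immediate exactly as before. Taking $N=k+l+1$, every summand of $T^{k+l+1}(\Delta\circ\nabla)$ contains a factor $T^{|I|}(\Delta)$ with $|I|\geq k+1$ or a factor $T^{N-|I|}(\nabla)$ with $N-|I|\geq l+1$, and each such factor vanishes. Hence $T^{k+l+1}(\Delta\circ\nabla)=0$, so $\Delta\circ\nabla\in\underline{\mathrm{D}}_{k+l}(a)$ and $\underline{\mathrm{D}}(a)$ is a filtered sub-superalgebra of $\underline{\mathcal{C}}(a,a)$.

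For the commutator I would apply the graded Lemma with $N=k+l$. Now in $T^{k+l}(\Delta\circ\nabla)$ only the summands with $|I|=k$ (hence $N-|I|=l$) survive, while in $T^{k+l}(\nabla\circ\Delta)$ only those with $|I|=l$ survive, all remaining terms being killed by the order bounds. Writing $[\Delta,\nabla]=\Delta\circ\nabla-(-1)^{|\Delta||\nabla|}\nabla\circ\Delta$ for the graded commutator, the graded Lemma then expresses $T^{k+l}([\Delta,\nabla])$ as a graded analogue of \eqref{eqn:T-on-commutator}, a single diagram built from $T^k(\Delta)$ and $T^l(\nabla)$. I would then show this diagram vanishes by the same unit-insertion argument used in Proposition~\ref{prop:comm-of-diffops}: precomposing with the unit $u$ and invoking the graded version of Lemma~\ref{lem:precompose-by-u} together with the vanishing of $T(T^k(\Delta))$ and $T(T^l(\nabla))$ (which hold since $\Delta$ and $\nabla$ have orders $k$ and $l$). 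This yields $T^{k+l}([\Delta,\nabla])=0$, i.e.\ $[\Delta,\nabla]\in\underline{\mathrm{D}}_{k+l-1}(a)$.

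I expect the main obstacle to be the sign bookkeeping in this last step: one must check that the sign $(-1)^{|\Delta||\nabla|}$ built into the graded commutator is exactly the Koszul sign relating the surviving top-order term of $\nabla\circ\Delta$ to that of $\Delta\circ\nabla$, so that the two contributions combine into the single vanishing diagram rather than failing to cancel. Verifying this amounts to tracking the parity strands of $\Delta$ and $\nabla$ as they are pulled past one another, where the defining relation $\sigma_{\pi,\pi}=-\mathrm{id}_{\pi\otimes\pi}$ of the parity structure and the definition of composition in $\underline{\mathcal{C}}$ must be used with care.
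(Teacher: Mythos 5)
Your proposal is correct and matches the paper's approach: the paper gives no separate proof of this proposition, stating only that the results of \S\ref{sec:Diffops} ``hold in this setting, modified appropriately,'' and your argument is precisely that modification --- the graded analogue of Lemma~\ref{lem:T-on-comp}, the order count killing all but the $|I|=k$ (resp.\ $|I|=l$) terms, and the unit-insertion argument from Proposition~\ref{prop:comm-of-diffops} with Koszul signs tracked via $\sigma_{\pi,\pi}=-\mathrm{id}_{\pi\otimes\pi}$. You also correctly identify the only genuinely new point, namely that the sign $(-1)^{|\Delta||\nabla|}$ in the graded commutator is exactly what makes the surviving top-order terms cancel.
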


\begin{thm}\label{thm:!existence-of-diffops-graded}
    Suppose that the symmetric monoidal $\Pi$-category $\mathcal{C}$ is $\otimes$-cocomplete. 
    \begin{enumerate}
        \item\label{itm:003} Let $x \to a$ be a map in $\mathcal{C}$ such that the canonical map of monoids $Sx \to a$ is epic in $\mathcal{C}$ (and hence epic in $\underline{\mathcal{C}}$). Then $\mathrm{can}^*:\underline{\mathrm{D}}_k(a) \to \underline{\mathcal{C}}(S^{\leq k}x,a)$ is injective.

        \item\label{itm:004} For all $x \in \mathcal{C}$, restriction gives an isomorphism of supermodules $\underline{\mathrm{D}}_k(Sx) \cong \underline{\mathcal{C}}(S^{\leq k}x,Sx)$.
    \end{enumerate}
\end{thm}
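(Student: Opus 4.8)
The plan is to recognise $\underline{\mathcal{C}}$ as itself an $R$-linear symmetric monoidal category — it is enriched in $\mathrm{smod}_R$, hence in particular in $\mathrm{mod}_R$, and its symmetry is the even transformation $\sigma$ — and then to observe that the graded operator $T$ of \eqref{eqn:application-of-t-graded} is nothing but the operator $T$ of \eqref{eqn:sequence-of-maps} read internally to $\underline{\mathcal{C}}$: on a homogeneous map the blue strand $\pi^{|f|}$ is exactly the datum that produces the Koszul sign when homogeneous morphisms are composed and tensored in $\underline{\mathcal{C}}$. Consequently $\underline{\mathrm{D}}_k(a)$ is literally the module of order-$k$ differential operators on $a$ computed in $\underline{\mathcal{C}}$, and both assertions become instances of Proposition \ref{prop:generators-diffops} and Theorem \ref{thm:!existence-of-diffops} with $\underline{\mathcal{C}}$ in place of $\mathcal{C}$ — once the hypotheses of those results are checked for $\underline{\mathcal{C}}$.

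The one point requiring care is that Construction \ref{constr:free-com-mon} be available in $\underline{\mathcal{C}}$, i.e.\ that $Sx$ and $S^{\leq k}x$, built in $\mathcal{C}$, also serve as the free commutative monoid and its truncation in $\underline{\mathcal{C}}$. I would verify this by a colimit-transfer argument. The coequalisers defining $S^n x$ and the coproduct \eqref{eqn:graded-decomposition} are diagrams of \emph{even} maps, so they live in $\mathcal{C}$, and their colimit there computes the even part of the relevant mapping space into any target. For the odd part one uses that $\Pi = \pi \otimes -$ is a colimit-preserving autoequivalence of $\mathcal{C}$: an odd map out of such a colimit is a $\mathcal{C}$-map out of $\pi \otimes (\mathrm{colim}) \cong \mathrm{colim}(\pi \otimes -)$, so the universal property extends from even to odd test maps. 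The same reasoning shows $\otimes_{\underline{\mathcal{C}}}$ preserves these particular colimits. Hence $Sx$ and $S^{\leq k}x$ are genuinely (truncated-)free in $\underline{\mathcal{C}}$, and the explicit model of Construction \ref{constr:free-com-mon} applies there verbatim.

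Granting this, part \ref{itm:003} is the graded form of Proposition \ref{prop:generators-diffops}: the graded analogues of the diagrammatic results of \S\ref{sec:Diffops} (culminating in Proposition \ref{prop:explicit-diffop}) hold by the identical string-diagram manipulations, now interpreted in the symmetric monoidal supercategory $\underline{\mathcal{C}}$, so that an order-$k$ operator is determined by its restriction to $(\leq k)$-fold products; composing with the epic $Sx \to a$ then forces $\mathrm{can}^*$ to be injective. Part \ref{itm:004} is the graded form of Theorem \ref{thm:!existence-of-diffops}: uniqueness follows from part \ref{itm:003}, and for existence one builds $\Delta$ from a homogeneous $\phi : S^{\leq k}x \to Sx$ by the super-analogue of \eqref{eqn:002} (the binomial coefficients are unchanged; the only new ingredient is the Koszul sign carried by the evaluation of $\phi$ when it is odd) and then checks $T^{k+1}(\Delta)\circ(\mathrm{id}\otimes u)=0$ via the graded version of Lemma \ref{lem:for-!existence-of-diffops}. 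Splitting each side into homogeneous components, the even part of each claim is exactly its ungraded counterpart, so the genuine content lies entirely in the odd part.

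I expect the main obstacle to be precisely the sign bookkeeping in the odd part of the existence argument: one must confirm that the purely combinatorial identity underlying Lemma \ref{lem:for-!existence-of-diffops} (an alternating binomial sum) is unaffected by the Koszul signs introduced when the odd strand $\pi$ is transported past the tensor factors in \eqref{eqn:002}. Since every such sign originates from the symmetry of $\underline{\mathcal{C}}$ and the formula \eqref{eqn:002} is assembled naturally from multiplications together with a single application of $\phi$, these signs should factor out uniformly across the sum, leaving the vanishing intact; making that uniformity explicit is the only step I would treat with real care.
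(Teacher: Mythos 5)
Your overall strategy---reduce the graded statement to the ungraded arguments of \S\ref{sec:Diffops}---is the right one, and it is also what the paper does; but your proposal has a genuine soft spot, sitting exactly at the step you yourself flag as needing ``real care,'' and your framing of the reduction is not licit as stated. The category $\underline{\mathcal{C}}$ is \emph{not} an ordinary $R$-linear symmetric monoidal category: $\otimes_{\underline{\mathcal{C}}}$ is an $\mathrm{smod}_R$-functor out of the tensor product of supercategories, so the interchange law holds only up to Koszul sign, $(f\otimes g)\circ(h\otimes k)=(-1)^{|g||h|}(f\circ h)\otimes(g\circ k)$ for homogeneous maps (this is precisely the point of the warning in Construction \ref{constr:supercat} that $\sigma_{\pi,\pi}=-\mathrm{id}$ is crucial for $\otimes_{\underline{\mathcal{C}}}$ to respect composition). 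Consequently Proposition \ref{prop:generators-diffops} and Theorem \ref{thm:!existence-of-diffops} cannot be invoked as black boxes ``with $\underline{\mathcal{C}}$ in place of $\mathcal{C}$'': their diagrammatic proofs silently use sign-free interchange. You then retreat to ``the identical manipulations interpreted in $\underline{\mathcal{C}}$'' and leave the sign question as an expectation. The expectation is correct, but the reason needs to be supplied, and it is exactly how the paper argues: decompose a graded operator into its even and odd components; the even component is handled by the ungraded results verbatim; for the odd component, work in $\mathcal{C}$ rather than in $\underline{\mathcal{C}}$, regarding the operator as an honest $\mathcal{C}$-morphism $\pi\otimes a\to a$ and rerunning the ungraded proofs with the $\pi$ strand carried along as a spectator. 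Every identity the theorem needs---the explicit formula for $T^N(\Delta)$, Proposition \ref{prop:diffop-condition}, Proposition \ref{prop:explicit-diffop}, the extension formula \eqref{eqn:002}, and Lemma \ref{lem:for-!existence-of-diffops}---is \emph{linear} in the single homogeneous operator, so at most one $\pi$ strand ever occurs; since crossings of $\pi$ with $a$ carry no sign in $\mathcal{C}$ (signs arise only from $\sigma_{\pi,\pi}$), no Koszul sign can enter, and the binomial identity of Lemma \ref{lem:for-!existence-of-diffops} applies untouched. By contrast, in Proposition \ref{prop:graded-comm-of-diffops}, where two odd operators are composed, signs genuinely do appear---which is exactly where your wholesale reduction would break, and why the blanket claim must be avoided.

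A second, smaller point: your colimit-transfer argument is correct---it amounts to the observation that $\pi\otimes-$ preserves the coequalisers and coproducts of Construction \ref{constr:free-com-mon}, which holds by $\otimes$-cocompleteness---but the conclusion that $Sx$ is ``genuinely free in $\underline{\mathcal{C}}$'' overstates both what is true and what is needed (freeness against \emph{odd} maps into commutative monoids is not what the symmetric algebra provides; for shifted objects one would see exterior-algebra behaviour instead). All the proof requires is that even and odd maps out of $Sx$, $S^{\leq k}x$ and $S^N x$ are detected on components and on $x^{\otimes N}$, i.e.\ the colimit-preservation statement itself; the paper uses exactly this and makes no freeness claim in $\underline{\mathcal{C}}$.
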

\begin{proof}
    To show Claim \ref{itm:003}, suppose that $\Delta,\Delta' \in \underline{\mathrm{D}}_k(a)$ are two differential operators such that $\Delta \circ \mathrm{can} = \Delta' \circ \mathrm{can}$ in $\underline{\mathcal{C}}(S^{\leq k}x,a)$. In terms of even and odd parts, this means that 
    \begin{enumerate}
        \item\label{itm:001} $\Delta_0 \circ \mathrm{can} = \Delta'_0 \circ \mathrm{can}$ in $\mathcal{C}(S^{\leq k}x,a)$; and
        \item\label{itm:002} $\Delta_1 \circ (\mathrm{id}_\pi \otimes \mathrm{can}) = \Delta'_1 \circ (\mathrm{id}_\pi \otimes \mathrm{can})$ in $\mathcal{C}(\pi \otimes S^{\leq k}x,a)$.
    \end{enumerate}
    Item \ref{itm:001} implies that $\Delta_0 = \Delta'_0$, by Proposition \ref{prop:generators-diffops}. Meanwhile, arguing as in the proof of Proposition \ref{prop:generators-diffops} (but with an additional strand depicting $\pi$ appropriately added), we see that Item \ref{itm:002} gives $\Delta_1 = \Delta'_1$.

    For Claim \ref{itm:004}, isomorphism of the even parts is just Theorem \ref{thm:!existence-of-diffops}. For the odd parts, one argues as in its proof (but again with an appropriately added strand).
\end{proof}

Spelling this out, if $\phi \in \underline{\mathcal{C}}(S^{\leq k}x,Sx)$ is homogeneous, then the unique differential operator $\Delta \in \underline{\mathrm{D}}_k(Sx)$ extending $\phi$ is determined by
\begin{equation}\label{eqn:explicit-graded-diffop}
    \vcenter{\hbox{\includegraphics[scale=1]{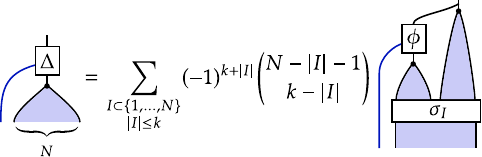}}}\;,
\end{equation}
for all $N \geq k+1$, where each of the $N$ strands at the bottom denotes the object $x \cong S^{1}x$.

\begin{prop}\label{prop:diffops-to-diffops-graded}
    Let $(F,\omega) : (\mathcal{C},\pi,\xi) \to (\mathcal{D},\tau,\zeta)$ be a lax symmetric monoidal $\Pi$-functor, and let $a \in \mathcal{C}$ be a commutative monoid. With respect to the commutative monoid structure on $F(a)$, the $\mathrm{smod}_R$-map $\underline{\mathcal{C}}(a,a) \stackrel{\underline{F}}{\longrightarrow} \underline{\mathcal{D}}(F(a),F(a))$ restricts to a filtered superalgebra morphism
    \begin{equation*}
        \underline{\mathrm{D}}(a) \to \underline{\mathrm{D}}(F(a)).
    \end{equation*}
\end{prop}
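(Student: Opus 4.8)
The plan is to imitate the proof of Proposition \ref{prop:diffops-to-diffops} inside the super-enriched categories $\underline{\mathcal{C}}$ and $\underline{\mathcal{D}}$. First, by Construction \ref{constr:supercat} the functor $\underline{F}$ is a linear lax symmetric monoidal $\mathrm{smod}_R$-functor, so in particular it preserves composition and identities; since it also sends the commutative monoid $a$ to $F(a)$ with its canonical monoid structure, the induced $\mathrm{smod}_R$-map $\underline{\mathcal{C}}(a,a) \to \underline{\mathcal{D}}(F(a),F(a))$ is automatically a morphism of superalgebras. All that then remains is to show it preserves the filtration by order, i.e.\ that $\Delta \in \underline{\mathrm{D}}_k(a)$ implies $\underline{F}(\Delta) \in \underline{\mathrm{D}}_k(F(a))$ for every $k$.

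By $\mathrm{smod}_R$-linearity it suffices to treat homogeneous $\Delta$, and the even case is exactly Proposition \ref{prop:diffops-to-diffops} applied to the underlying functor $F$. So the genuinely new content is the odd case. Here I would establish the graded analogue of the commuting square from the proof of Proposition \ref{prop:diffops-to-diffops}: for each $m \geq 1$, the operator $T$ of \eqref{eqn:application-of-t-graded} on $\underline{\mathcal{C}}$ is intertwined by $\underline{F}$ with the operator $T$ on $\underline{\mathcal{D}}$, up to precomposition with the canonical comparison maps $F(a)^{\otimes m} \to F(a^{\otimes m})$ built from $J$. This is a string-diagram verification: the graded $T$ is assembled purely from the multiplication $\mu$ of $a$, the symmetry, and—in the odd case—the extra parity strand, while $\underline{F}$ carries $\mu$ to $F(\mu)\circ J$ and commutes with the symmetry of $\underline{\mathcal{C}}$ by laxness.

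The main obstacle is the parity bookkeeping. An odd $\Delta$ is a map $\pi \otimes a \to a$ in $\mathcal{C}$, and applying the graded $T$ introduces the strand $\pi$ that must be transported past tensor factors before being merged. Under $\underline{F}$ this strand becomes $\tau$, introduced through $\omega:\tau \to F(\pi)$, and I must check that the two ways of producing the $\tau$ strand—before versus after applying $T$—agree. This is precisely where the defining commutative square of a lax symmetric monoidal $\Pi$-functor (relating $\omega\otimes\omega$, $\zeta$, $\eta$, $J_{\pi,\pi}$ and $F(\xi)$) is invoked, together with the naturality of $J$ and the sign relation $\sigma_{\pi,\pi} = -\mathrm{id}_{\pi\otimes\pi}$ that underlies the well-definedness of both $\underline{\mathcal{C}}$ and $\underline{F}$. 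I expect the bulk of the work to be confirming that these signs are consistent.

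Finally, specialising the intertwining square to $m=1$, where the comparison map $F(a) \to F(a)$ is the identity, yields a commuting ladder between the towers \eqref{eqn:application-of-t-graded} for $\underline{\mathcal{C}}$ and $\underline{\mathcal{D}}$. Chasing $\Delta$ down this ladder gives $T^{k+1}\big(\underline{F}(\Delta)\big) = \underline{F}\big(T^{k+1}(\Delta)\big) = 0$ whenever $T^{k+1}(\Delta) = 0$, so $\underline{F}$ preserves order and hence restricts to the claimed filtered superalgebra morphism $\underline{\mathrm{D}}(a) \to \underline{\mathrm{D}}(F(a))$.
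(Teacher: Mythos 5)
Your proposal is correct and follows essentially the same route as the paper: reduce to a commuting square intertwining $T$ with $\underline{F}$ up to precomposition by the comparison maps $F(a)^{\otimes m} \to F(a^{\otimes m})$, recycle the even part from Proposition \ref{prop:diffops-to-diffops}, verify the odd part using $\omega$, the defining square of a lax symmetric monoidal $\Pi$-functor, naturality of $J$ and $\sigma_{\pi,\pi}=-\mathrm{id}_{\pi\otimes\pi}$, and then chase the resulting ladder. The only cosmetic slip is writing $T^{k+1}\big(\underline{F}(\Delta)\big) = \underline{F}\big(T^{k+1}(\Delta)\big)$ on the nose --- the right-hand side should be pulled back along the comparison maps before the two agree --- but since $T^{k+1}(\Delta)=0$ this does not affect the conclusion.
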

\begin{proof}
    Arguing as in the proof of Proposition \ref{prop:diffops-to-diffops}, it suffices to show that $\underline{F}$ fits into a commutative diagram
    \begin{equation*}
        \begin{tikzcd}
        	{\underline{\mathcal{C}}(a,a)} & {\underline{\mathcal{C}}(a^{\otimes 2},a)} & {\underline{\mathcal{C}}(a^{\otimes 3},a)} & \cdots \\
        	{\underline{\mathcal{D}}(F(a),F(a))} & {\underline{\mathcal{D}}(F(a)^{\otimes 2},F(a))} & {\underline{\mathcal{D}}(F(a)^{\otimes 3},F(a))} & \cdots
        	\arrow[from=1-1, to=1-2]
        	\arrow["\underline{F}"', from=1-1, to=2-1]
        	\arrow[from=1-2, to=1-3]
        	\arrow[from=1-2, to=2-2]
        	\arrow[from=1-3, to=1-4]
        	\arrow[from=1-3, to=2-3]
        	\arrow[from=2-1, to=2-2]
        	\arrow[from=2-2, to=2-3]
        	\arrow[from=2-3, to=2-4]
        \end{tikzcd},
    \end{equation*}
    where the horizontal maps are those as in \eqref{eqn:application-of-t-graded}. For this, it suffices to show that the following diagram commutes for all $m \geq 1$:
    \begin{equation}\label{eqn:comm-diag-2}
        \begin{tikzcd}
            {\underline{\mathcal{C}}(a^{\otimes m},a)} & {\underline{\mathcal{D}}(F(a^{\otimes m}),F(a))} & {\underline{\mathcal{D}}(F(a)^{\otimes m},F(a))} \\
            {\underline{\mathcal{C}}(a^{\otimes (m+1)},a)} & {\underline{\mathcal{D}}(F(a^{\otimes (m+1)}),F(a))} & {\underline{\mathcal{D}}(F(a)^{\otimes (m+1)},F(a))}
            \arrow["\underline{F}", from=1-1, to=1-2]
            \arrow[from=1-1, to=2-1]
            \arrow[from=1-2, to=1-3]
            \arrow[from=1-3, to=2-3]
            \arrow["\underline{F}", from=2-1, to=2-2]
            \arrow[from=2-2, to=2-3]
        \end{tikzcd},
    \end{equation}
    where the rightmost horizontal maps are precomposition (in $\underline{\mathcal{D}}$) by the canonical maps $F(a)^{\otimes m} \to F(a^{\otimes m})$ coming from the coherence maps of $F$. Commutativity of the even part of \eqref{eqn:comm-diag-2} was already established in the proof of Proposition \ref{prop:diffops-to-diffops}. The odd part of \eqref{eqn:comm-diag-2} is given by
    % https://q.uiver.app/#q=WzAsNixbMCwwLCJcXG1hdGhjYWx7Q30oXFxwaSBcXG90aW1lcyBhXntcXG90aW1lcyBtfSxhKSJdLFsxLDAsIlxcbWF0aGNhbHtEfShGKFxccGkgXFxvdGltZXMgYV57XFxvdGltZXMgbX0pLEYoYSkpIl0sWzIsMCwiXFxtYXRoY2Fse0R9KFxcdGF1IFxcb3RpbWVzIEYoYSlee1xcb3RpbWVzIG19LEYoYSkpIl0sWzAsMSwiXFxtYXRoY2Fse0N9KFxccGkgXFxvdGltZXMgYV57XFxvdGltZXMgKG0rMSl9LGEpIl0sWzEsMSwiXFxtYXRoY2Fse0R9KEYoXFxwaSBcXG90aW1lcyBhXntcXG90aW1lcyAobSsxKX0pLEYoYSkpIl0sWzIsMSwiXFxtYXRoY2Fse0R9KFxcdGF1IFxcb3RpbWVzIEYoYSlee1xcb3RpbWVzIChtKzEpfSxGKGEpKSJdLFswLDEsIkYiXSxbMSwyXSxbMyw0LCJGIl0sWzQsNV0sWzAsM10sWzIsNV1d
    \begin{equation}\label{eqn:comm-diag-3}
        \begin{tikzcd}
        	{\mathcal{C}(\pi \otimes a^{\otimes m},a)} & {\mathcal{D}(F(\pi \otimes a^{\otimes m}),F(a))} & {\mathcal{D}(\tau \otimes F(a)^{\otimes m},F(a))} \\
        	{\mathcal{C}(\pi \otimes a^{\otimes (m+1)},a)} & {\mathcal{D}(F(\pi \otimes a^{\otimes (m+1)}),F(a))} & {\mathcal{D}(\tau \otimes F(a)^{\otimes (m+1)},F(a))}
        	\arrow["F", from=1-1, to=1-2]
        	\arrow[from=1-1, to=2-1]
        	\arrow[from=1-2, to=1-3]
        	\arrow[from=1-3, to=2-3]
        	\arrow["F", from=2-1, to=2-2]
        	\arrow[from=2-2, to=2-3]
        \end{tikzcd},
    \end{equation}
    where the rightmost horizontal maps are precomposition (in $\mathcal{D}$) by the maps
    \begin{equation*}
        \tau \otimes F(a)^{\otimes m} \stackrel{\omega \otimes\mathrm{id}}{\longrightarrow} F(\pi) \otimes F(a)^{\otimes m} \to F(\pi \otimes a^{\otimes m}).
    \end{equation*}
    Unravelling the definitions and using properties of the coherence maps of $F$, one can verify that \eqref{eqn:comm-diag-3} commutes.
\end{proof}

We now generalise the classical notion of a BV algebra \cite{penkava1992some,lian1993new,getzler1994batalin}.
\begin{defn}
    Let $\mathcal{C} \in \mathrm{SymMon\Pi\text{-}Cat}_R$. A \textbf{Batalin-Vilkovisky (BV) algebra} in $\mathcal{C}$ is a commutative monoid $a \in \mathcal{C}$ together with an \textit{odd} $\Delta \in \underline{\mathrm{D}}_2(a)$ (called the \textbf{BV operator}) such that $\Delta^2 = 0$ and $\Delta \circ u = 0$, where $u$ is the unit of $a$. The \textbf{associated bracket} is the odd map $\{-,-\}_{\Delta}:a \otimes a \to a$ given by \begin{equation*}
        \includegraphics[scale=1]{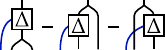}\;.
    \end{equation*}
    Given two BV algebras $(a,\Delta_a)$ and $(b,\Delta_b)$ in $\mathcal{C}$, a \textbf{BV algebra map} $(a,\Delta_a) \to (b,\Delta_b)$ is a map of commutative monoids $f:a \to b$ such that $\Delta_b \circ f = f \circ \Delta_a$ in $\underline{\mathcal{C}}$.
\end{defn}

The following proposition is essentially the idea that lax symmetric monoidal functors send algebras to algebras:

\begin{prop}\label{prop:pi-functor-on-BV-algs}
    Let $F:\mathcal{C} \to \mathcal{D}$ be a lax symmetric monoidal $\Pi$-functor. If $(a,\Delta)$ is a BV algebra in $\mathcal{C}$, then $(F(a),\underline{F}(\Delta))$ is a BV algebra in $\mathcal{D}$. Furthermore, given a map of BV algebras $f:a \to b$, we have that $F(f):F(a) \to F(b)$ is a map of BV algebras.
\end{prop}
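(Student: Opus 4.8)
The plan is to deduce this almost entirely from Proposition \ref{prop:diffops-to-diffops-graded}, which already isolates the only piece of genuine content; everything else is formal bookkeeping with functoriality of $\underline{F}$ together with the standard fact that a lax symmetric monoidal functor carries commutative monoids to commutative monoids. First I would recall, exactly as in the proof of Proposition \ref{prop:diffops-to-diffops}, that $F(a)$ is canonically a commutative monoid in $\mathcal{D}$, with multiplication $F(\mu) \circ J_{a,a}$ and unit $u_{F(a)} = F(u) \circ \eta$, where $\mu,u$ are the multiplication and unit of $a$ and $\eta, J$ are the coherence maps of $F$.

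Next I would verify the four defining conditions of a BV algebra for the pair $(F(a),\underline{F}(\Delta))$. Since $\underline{F}$ is a superfunctor it preserves parity, so $\underline{F}(\Delta)$ is odd because $\Delta$ is. By Proposition \ref{prop:diffops-to-diffops-graded}, $\underline{F}$ restricts to a filtered superalgebra morphism $\underline{\mathrm{D}}(a) \to \underline{\mathrm{D}}(F(a))$, and in particular preserves order, whence $\underline{F}(\Delta) \in \underline{\mathrm{D}}_2(F(a))$. For the square-zero condition I would invoke functoriality of $\underline{F}$ in $\underline{\mathcal{C}}$: $\underline{F}(\Delta)^2 = \underline{F}(\Delta \circ \Delta) = \underline{F}(\Delta^2) = \underline{F}(0) = 0$. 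Finally, since $u$ is even we have $F(u) = \underline{F}(u)$, so
\[
\underline{F}(\Delta) \circ u_{F(a)} = \underline{F}(\Delta) \circ \underline{F}(u) \circ \eta = \underline{F}(\Delta \circ u) \circ \eta = \underline{F}(0) \circ \eta = 0,
\]
using the BV axiom $\Delta \circ u = 0$. This exhibits $(F(a),\underline{F}(\Delta))$ as a BV algebra in $\mathcal{D}$.

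For the second assertion, a BV algebra map $f:a \to b$ is in particular an even map of commutative monoids, so $F(f)$ is again a map of commutative monoids by lax monoidality of $F$, and $F(f) = \underline{F}(f)$ since $f$ is even. The intertwining condition then follows from functoriality of $\underline{F}$ applied to the identity $\Delta_b \circ f = f \circ \Delta_a$ in $\underline{\mathcal{C}}$:
\[
\underline{F}(\Delta_b) \circ F(f) = \underline{F}(\Delta_b \circ f) = \underline{F}(f \circ \Delta_a) = F(f) \circ \underline{F}(\Delta_a).
\]

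I do not expect a real obstacle here: the single substantive point, namely that $\underline{F}$ preserves the order-$2$ differential-operator condition, has already been handled in Proposition \ref{prop:diffops-to-diffops-graded}. The only places demanding minor care are keeping track of parities --- ensuring $\underline{F}(\Delta)$ remains odd, and that even maps such as $u$ and $f$ satisfy $\underline{F}(-) = F(-)$ --- and remembering that the unit of $F(a)$ is $F(u) \circ \eta$ rather than $F(u)$ itself when checking $\underline{F}(\Delta) \circ u_{F(a)} = 0$.
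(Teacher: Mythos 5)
Your proof is correct and follows exactly the paper's route: the paper's own proof is the single line ``This follows immediately from Proposition \ref{prop:diffops-to-diffops-graded}'', and your argument simply spells out the bookkeeping (parity and order preservation, functoriality of $\underline{F}$ for the square-zero and intertwining identities, and the coherence maps for the unit of $F(a)$) that the paper leaves implicit.
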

\begin{proof}
    This follows immediately from Proposition \ref{prop:diffops-to-diffops-graded}.
\end{proof}

\begin{expl}
    The functor $\underline{\mathcal{C}}(1,-):\mathcal{C} \to \mathrm{smod}_R$ is canonically a lax symmetric monoidal $\Pi$-functor, so $\underline{\mathcal{C}}(1,a)$ is an $R$-BV algebra (in the classical sense) for any BV algebra $a \in \mathcal{C}$.
\end{expl}

\section{A modified necklace Lie algebra and the Poisson story}\label{sec:NLBs}

We first remind the reader of the necklace Lie algebra of a quiver \cite{ginzburg2001,bocklandt2002}. Let $Q = \left[s,t:Q_1 \to Q_0\right]$ be a finite quiver, and denote by $\overline{Q}$ its double. The double of an arrow $a$ in $Q$ is denoted $\overline{a}$. Consider the path algebra $R\overline{Q}$. As an $R$-module, $R\overline{Q}$ is free on the set of paths in $\overline{Q}$ (including the \textit{constant paths} $e_i$ for each vertex $i \in \overline{Q}_0$ --- these are idempotents in $R\overline{Q}$, and $R\overline{Q}$ can be viewed as a $\bigoplus_{i \in \overline{Q}_0}Re_i$-bimodule). Our convention is to read paths from \textit{left to right}: a path in $\overline{Q}$ is a word $a_1\cdots a_n$ in arrows of $\overline{Q}$, where $t(a_i) = s(a_{i+1})$. The multiplication on $R\overline{Q}$ is the usual concatenation of paths.\footnote{Because of our path convention, our path algebra is the opposite of the more-standard notion of path algebra.}

Let $\mathcal{N} := R\overline{Q} / [R\overline{Q} , R\overline{Q}]$ be the quotient of $R\overline{Q}$ by its commutator-submodule, which imposes relations
\begin{equation*}
    a_1\cdots a_n = a_2 \cdots a_n a_1 = \dots = a_n a_1 \cdots a_{n-1}. 
\end{equation*}
Equivalently, $\mathcal{N}$ is the free $R$-module on the set of \textit{cyclic paths} in $\overline{Q}$, where by a `cyclic path' we mean a closed path considered up to cyclic permutation (i.e.\ where its startpoint is forgotten).

The reason for considering the \textit{double} quiver is that it has a canonical antisymmetric pairing $\langle -,- \rangle : \overline{Q}_1 \times \overline{Q}_1 \to \mathbb{Z}$ given by
\begin{equation*}
    \langle a,b \rangle := 
    \begin{cases}
        1, \;\text{if $a \in Q_1$ and $b = \overline{a}$}\\
        -1, \;\text{if $b \in Q_1$ and $a = \overline{b}$}\\
        0, \;\text{otherwise}.
    \end{cases}
\end{equation*}
Using this pairing, the module $\mathcal{N}$ can be given a Lie algebra structure with the following Lie bracket:\footnote{Given that constant paths $e_i$ have length $0$, it is understood that $\mathrm{br}(e_i \otimes -) = 0 = \mathrm{br}(- \otimes e_i)$. The same will apply for all (co)brackets defined later.}
\begin{equation*}
    \mathrm{br}(a_1\cdots a_m \otimes b_1 \cdots b_n) = \sum_{i=1}^m \sum_{j=1}^n \langle a_i, b_j \rangle e_{t(a_i)} a_{i+1} \cdots a_{i-1} b_{j+1} \cdots b_{j-1}.
\end{equation*}
$(\mathcal{N},\mathrm{br})$ is referred to as the \textit{necklace Lie algebra} of the quiver. In fact, the Lie bracket $\mathrm{br}$ can be lifted to the level of the path algebra $R\overline{Q}$, where it takes the form of a Poisson double bracket $\{\!\{-,-\}\!\} : R\overline{Q} \otimes R\overline{Q} \to R\overline{Q} \otimes R\overline{Q}$ in the sense of van den Bergh \cite{van2008double}, defined by
\begin{equation*}
    \{\!\{ a, b \}\!\} = \langle a,b \rangle e_{t(a)} \otimes e_{t(b)}. 
\end{equation*}
Given that $(R\overline{Q},\{\!\{-,-\}\!\})$ is a double Poisson algebra, the usual machinery of van den Bergh applies, recovering the Poisson structures on representation spaces that are related to the necklace Lie algebra $(\mathcal{N},\mathrm{br})$ via the trace maps of Bocklandt, Le Bruyn and Ginzburg.

\subsection{The construction}

Our contributions in this subsection are two-fold. Firstly, we define a modified double bracket on the path algebra $R\overline{Q}$, itself inducing a modified version of the necklace Lie bracket. Secondly, we take a more categorical perspective on the representation side of the story which shows how the regular and modified necklace structures are dual to each other via categorical dualisability.

Consider the double bracket $\{\!\{-,-\}\!\}^+ : R\overline{Q} \otimes R\overline{Q} \to R\overline{Q} \otimes R\overline{Q}$ defined by
\begin{equation}\label{eqn:double-bracket}
    \{\!\{ a, b \}\!\}^+ = \langle a,b \rangle ba \otimes ab. 
\end{equation}
\begin{prop}
    The double bracket $\{\!\{-,-\}\!\}^+$ is a \textit{Poisson} double bracket.
\end{prop}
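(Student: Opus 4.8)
The plan is to verify the two conditions in van den Bergh's definition of a Poisson double bracket \cite{van2008double}: first that $\{\!\{-,-\}\!\}^+$ is a double bracket, i.e.\ cyclically antisymmetric ($\{\!\{b,a\}\!\}^+ = -\{\!\{a,b\}\!\}^{+\circ}$, where $(u\otimes v)^\circ = v\otimes u$) and a derivation in its second argument for the outer bimodule structure on $R\overline{Q}\otimes R\overline{Q}$; and second that it obeys the double Jacobi identity. To get started I would pin down the extension: formula \eqref{eqn:double-bracket} prescribes $\{\!\{-,-\}\!\}^+$ only on arrows, so I declare it to vanish when either argument is a constant path $e_i$ and extend to all of $R\overline{Q}$ by the Leibniz rule in each variable. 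Since $R\overline{Q}$ is the tensor algebra $T_B V$ over $B=\bigoplus_i Re_i$ on the bimodule $V$ spanned by the arrows, there are no relations to check, so this extension exists and is unique, and the only genuine input is the value on arrows. Antisymmetry there is immediate from antisymmetry of the pairing, $\{\!\{b,a\}\!\}^+ = \langle b,a\rangle\, ab\otimes ba = -\langle a,b\rangle\, ab\otimes ba = -\{\!\{a,b\}\!\}^{+\circ}$, and I would check the idempotent-compatibility of the Leibniz extension (e.g.\ $e_{s(b)}$ acts as the identity on the first tensor factor $ba$ of $\{\!\{a,b\}\!\}^+$, since $ba$ starts at $s(b)$); derivation in the first argument is then automatic from van den Bergh's framework. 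This gives a genuine double bracket.

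The heart of the matter is the double Jacobi identity, i.e.\ vanishing of the associated triple bracket
\[
\{\!\{a,b,c\}\!\}^+ := \{\!\{a,\{\!\{b,c\}\!\}^+\}\!\}^+_L + \sigma\,\{\!\{b,\{\!\{c,a\}\!\}^+\}\!\}^+_L\,\sigma^{-1} + \sigma^2\,\{\!\{c,\{\!\{a,b\}\!\}^+\}\!\}^+_L\,\sigma^{-2},
\]
where $\{\!\{a,u\otimes v\}\!\}^+_L := \{\!\{a,u\}\!\}^+\otimes v$ and $\sigma$ cyclically permutes the three tensor factors of $R\overline{Q}^{\otimes 3}$ (conventions as in \cite{van2008double}). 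By van den Bergh the triple bracket attached to any double bracket is a triple derivation, so $\{\!\{-,-,-\}\!\}^+$ vanishes identically as soon as it vanishes on algebra generators; as it also vanishes whenever an argument is a constant path, it suffices to check $\{\!\{a,b,c\}\!\}^+ = 0$ for $a,b,c$ arrows of $\overline{Q}$.

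This final step is the main obstacle, but it is a bounded computation. Expanding one summand via $\{\!\{a,\{\!\{b,c\}\!\}^+\}\!\}^+_L = \langle b,c\rangle\,\{\!\{a,cb\}\!\}^+\otimes bc$ together with the Leibniz rule $\{\!\{a,cb\}\!\}^+ = \{\!\{a,c\}\!\}^+ b + c\,\{\!\{a,b\}\!\}^+ = \langle a,c\rangle\, ca\otimes acb + \langle a,b\rangle\, cba\otimes ab$ produces monomials weighted by products of two pairings. The key pruning is that $\langle-,-\rangle$ is supported on involutive pairs $\{d,\overline{d}\}$: a summand survives only if two of $a,b,c$ form such a pair, and the product of two pairings then forces the third arrow into the same pair, restricting attention to the multisets $\{d,d,\overline{d}\}$ and $\{d,\overline{d},\overline{d}\}$ and their orderings — a short finite list. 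I would then tabulate the three cyclic summands in each case, carefully tracking the permutations $\sigma^{\pm i}$ and the noncommutative words $ba\otimes\cdots\otimes ab$, and verify that they cancel in pairs. The only real difficulty is this bookkeeping; there is no conceptual obstruction once the support condition on $\langle-,-\rangle$ has cut the cases down.
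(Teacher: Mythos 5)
Your proposal is correct and follows essentially the same route as the paper: the paper's proof likewise reduces the double Jacobi identity to showing that the associated triple bracket $\{\!\{-,-,-\}\!\}^+$ vanishes on triples of arrows (using that the triple bracket is a derivation in each argument and vanishes on constant paths), and then treats the remaining finite computation as a simple calculation. Your extra details --- the well-definedness of the Leibniz extension on the tensor algebra $T_B V$ over $B = \bigoplus_i Re_i$, and the observation that the support of $\langle -,- \rangle$ restricts the check to the multisets $\{d,d,\overline{d}\}$ and $\{d,\overline{d},\overline{d}\}$, where the cyclic summands indeed cancel in pairs --- are accurate elaborations of what the paper leaves implicit.
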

\begin{proof}
    One needs to show that the associated triple bracket $\{\!\{-,-,-\}\!\}^+$ vanishes. For this, it suffices to show that $\{\!\{a,b,c\}\!\}^+$ vanishes for all arrows $a,b,c$. This is a simple calculation.
\end{proof}
Thus, the Poisson double bracket $\{\!\{-,-\}\!\}^+$ induces a Lie bracket on $\mathcal{N}$ which we denote by $\mathrm{br}^+$. Explicitly:
\begin{equation*}
    \mathrm{br}^+(a_1\cdots a_m \otimes b_1 \cdots b_n) = \sum_{i=1}^m \sum_{j=1}^n \langle a_i, b_j \rangle a_i a_{i+1} \cdots a_{i-1} a_i b_j b_{j+1} \cdots b_{j-1} b_j.
\end{equation*}
We call this bracket the \textit{augmented necklace bracket}, and call $(\mathcal{N},\mathrm{br}^+)$ the \textit{augmented necklace Lie algebra}. For homogeneity, we can denote by $\mathrm{br}^-$ the usual necklace bracket $\mathrm{br}$.
\begin{rem}
    Note that the usual necklace bracket can be seen to `resolve' intersections of cyclic paths by removing pairs of arrows in involution, while the augmented bracket `worsens' them by instead inserting such pairs.
\end{rem}

We now turn to the representation side of the story. In what follows, let $\mathcal{C}$ be an $R$-linear symmetric monoidal category. For simplicity we assume that $\mathcal{C}$ is $\otimes$-cocomplete.

\begin{rem}
    Our assumptions on $\mathcal{C}$ can be weakened: we don't really need $\mathcal{C}$ to possess \textit{all} colimits, nor $\otimes$ to preserve \textit{all} of them; only enough to be additive and so that free commutative monoids can be modelled via Construction \ref{constr:free-com-mon}.
\end{rem}

For each vertex $i$ of the quiver $\overline{Q}$, fix $c_i \in \mathcal{C}$ a dualisable object. Recall that \textit{dualisable} means that there exists an object $c_i^\vee \in \mathcal{C}$ together with morphisms $\ev:c_i^\vee \otimes c_i \to 1$ and $\coev:1 \to c_i \otimes c_i^\vee$ (called \textit{evaluation} and \textit{coevaluation}) satisfying the usual \textit{snake identities}. This data is unique in an appropriate sense, and hence we are safe to refer to $c_i^\vee$ as \textit{the} dual of $c_i$.

For each arrow $a$ in $\overline{Q}$, define $h_a := c_{t(a)} \otimes c_{s(a)}^\vee$. This can been seen as the internal hom object $\left[c_{s(a)},c_{t(a)}\right]$, since duality data exhibits an adjunction $-\otimes c \dashv - \otimes c^\vee$. In particular, the $1$-points of $h_a$ are precisely the morphisms $c_{s(a)} \to c_{t(a)}$ in $\mathcal{C}$. Following this observation, defining $h$ to be the biproduct\footnote{This exists since the category $\mathcal{C}$ is additive and the quiver $\overline{Q}$ is finite.} $\bigoplus_{a \in \overline{Q}_1} h_a$, we see that the $1$-points of $h$ are the same as representations of $\overline{Q}$ in $\mathcal{C}$ with vertices painted by the objects $c_i$. For this reason, we refer to $h$ as a \textit{representation variety} of the double quiver.

Our next step is to construct an object in $\mathcal{C}$ that models `functions on the representation variety'. Note that $h$ is itself dualisable with dual given by
\begin{equation}
    h^\vee = \bigoplus_{a \in \overline{Q}_1} \underbrace{c_{s(a)} \otimes c_{t(a)}^\vee}_{h_a^\vee}.
\end{equation}
As such, mimicking polynomial functions on a vector space, the object we consider is the free commutative monoid on $h^\vee$ internal to $\mathcal{C}$, $Sh^\vee$. This commutative monoid can be equipped with a Poisson bracket which we relate to the necklace bracket.

A bi-derivation $Sh^\vee \otimes Sh^\vee \to Sh^\vee$ is equivalently a map $h^\vee \otimes h^\vee \to Sh^\vee$ (see \S\ref{sec:bi-differential operators}). The data of such a map is then the same as the data of maps
\begin{equation}
    h_a^\vee \otimes h_b^\vee \to Sh^\vee
\end{equation}
for all arrows $a$ and $b$.

\begin{itemize}
    \item Suppose that $a$ and $b$ are two arrows in involution (i.e. $a=\overline{b}$ or $b = \overline{a}$). Then they in particular have opposite source and target vertices, and we have
    \begin{equation}
        h_a^\vee = c_i \otimes c_j^\vee \quad \text{and} \quad h_b^\vee = c_j \otimes c_i^\vee,
    \end{equation}
    for appropriate vertices $i,j$. Evaluation provides a map $h_a^\vee \otimes h_b^\vee \to 1$. In diagrammatic notation this map is simply
    \begin{equation*}
        \vcenter{\hbox{\includegraphics[scale=1]{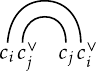}}}\;,
    \end{equation*}
    which for brevity we draw as 
    \begin{equation}\label{eqn:trace-pairing}
        \vcenter{\hbox{\includegraphics[scale=1]{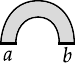}}}\;,
    \end{equation}
    where the solid gray colour suggests that we should view such pairs of strands as one object (e.g.\ $h_a^\vee$).

    \item When $a$ and $b$ are not in involution, the only map $h_a^\vee \otimes h_b^\vee \to 1$ that we can in specify in general is $0$.
\end{itemize}
\begin{rem}
    Equation \eqref{eqn:trace-pairing} is simply the evaluation map for duality data $(h_b^\vee)^\vee = h_a^\vee$. In the classical case $\mathcal{C} = \mathrm{vect}_k$, this map is called the \textit{trace pairing}: the spaces $h_a^\vee$ and $h_b^\vee$ are identified with hom spaces, and \eqref{eqn:trace-pairing} is simply `compose and trace'.
\end{rem}
\begin{defn}
    Define $\{-,-\}:Sh^\vee \otimes Sh^\vee \to Sh^\vee$ as the bi-derivation obtained from the map $h^\vee \otimes h^\vee \to Sh^\vee$ whose $ab$'th component $h_a^\vee \otimes h_b^\vee \to Sh^\vee$ is
    \begin{equation}\label{eqn:P-bracket-usual}
        \vcenter{\hbox{\includegraphics[scale=1]{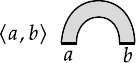}}}\;,
    \end{equation}
    where we have implicitly composed with $1 \to Sh^\vee$ at the top.
\end{defn}
Notice that $\{-,-\}$ as defined is antisymmetric, due to antisymmetry of $\langle -,- \rangle$. Thus, by the discussion of \S\ref{sec:bi-differential operators}, to show that $\{-,-\}$ is a \textit{Poisson} bracket it suffices to show that $\{-,\{-,-\}\} \circ \mathfrak{S}_3$ vanishes on $(h^\vee)^{\otimes 3}$. But $\{-,-\}$ is a degree $-2$ map with respect to polynomial grading, and so one sees that it satisfies the Jacobi identity purely due to degree reasons. Therefore, we have the following:
\begin{prop}
    $(Sh^\vee,\{-,-\})$ is a Poisson algebra in $\mathcal{C}$.
\end{prop}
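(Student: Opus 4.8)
The statement packages two claims: antisymmetry of $\{-,-\}$ and the Jacobi identity. Antisymmetry is already in hand, being an immediate consequence of the antisymmetry of the pairing $\langle -,-\rangle$ used to build the generating map. So the plan is to concentrate entirely on the Jacobi identity, i.e. on showing $\{-,\{-,-\}\}\circ\mathfrak{S}_3 = 0$ as a map $(Sh^\vee)^{\otimes 3} \to Sh^\vee$.

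My strategy is to reduce this to a statement about generators and then settle it by a degree count. Since $\{-,-\}$ is an antisymmetric bi-derivation, the composite $\{-,\{-,-\}\}\circ\mathfrak{S}_3$ is a \emph{tri-derivation}, exactly as recorded in the discussion of \S\ref{sec:bi-differential operators}. Because $\mathcal{C}$ is $\otimes$-cocomplete and $Sh^\vee$ is free on $h^\vee$, the three-variable analogue of the uniqueness half of Theorem \ref{thm:!existence-of-diffops} applies: a tri-derivation out of $Sh^\vee \otimes Sh^\vee \otimes Sh^\vee$ is determined by its restriction to $h^\vee \otimes h^\vee \otimes h^\vee$. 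Hence it suffices to verify that $\{-,\{-,-\}\}\circ\mathfrak{S}_3$ vanishes on $(h^\vee)^{\otimes 3} = (S^1 h^\vee)^{\otimes 3}$.

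The key observation is then a grading argument with respect to the polynomial grading on $Sh^\vee = \bigsqcup_{n\geq 0} S^n h^\vee$. By construction the generating map $h^\vee \otimes h^\vee \to Sh^\vee$ factors through the unit $1 \cong S^0 h^\vee \hookrightarrow Sh^\vee$, so $\{-,-\}$ lowers polynomial degree by $2$; consequently $\{-,\{-,-\}\}$ lowers it by $4$. Evaluated on the degree-$3$ object $(h^\vee)^{\otimes 3}$, the output would have to live in degree $3-4 = -1$, which is empty, forcing the map to be zero. Equivalently, and more concretely: applying the inner bracket to two of the three tensor factors lands in $S^0 h^\vee$, after which the outer bracket is evaluated on one generator and one constant, and a derivation kills the unit — so every summand of the symmetrised expression vanishes.

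In this proof there is essentially no computation to grind through; the only point demanding care is the reduction to generators, namely confirming that $\{-,\{-,-\}\}\circ\mathfrak{S}_3$ is genuinely a tri-derivation so that the appropriate analogue of Theorem \ref{thm:!existence-of-diffops} may be invoked. Once that reduction is justified, the Jacobi identity holds \emph{for free}, as a formal consequence of the bracket strictly lowering polynomial degree. I therefore expect the write-up to be short, with the grading observation doing all the work.
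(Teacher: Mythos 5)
Your proposal is correct and follows exactly the paper's argument: antisymmetry from antisymmetry of $\langle-,-\rangle$, reduction to generators via the tri-derivation observation of \S\ref{sec:bi-differential operators}, and then vanishing on $(h^\vee)^{\otimes 3}$ purely for degree reasons, since $\{-,-\}$ has polynomial degree $-2$. The paper compresses this into a few lines of prose preceding the proposition; your write-up just spells out the same degree count explicitly.
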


This Poisson algebra is related to the necklace Lie algebra $(\mathcal{N},\mathrm{br})$ in a nice way. For homogeneity, we work with the induced Poisson algebra $S\mathcal{N}$. Now, $Sh^\vee$ lives in $\mathcal{C}$ while $S\mathcal{N}$ lives in $\mathrm{mod}_R$, so to be able to compare these two structures we apply the linear lax symmetric monoidal functor $\mathcal{C}(1,-):\mathcal{C} \to \mathrm{mod}_R$, which gives an $R$-Poisson algebra $\mathcal{C}(1,Sh^\vee)$ whose bracket we denote by $\{-,-\}$ also.

Next, let $\mathrm{tr}:S\mathcal{N} \to \mathcal{C}(1,Sh^\vee)$ be the map of commutative $R$-algebras defined on the generating set of cyclic paths as
\begin{equation*}
    \vcenter{\hbox{\includegraphics[scale=1]{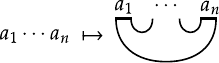}}} \quad \text{and} \quad
    \vcenter{\hbox{\includegraphics[scale=1]{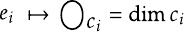}}}\;,
\end{equation*}
where we implicitly composed with $h_{a_1}^\vee \otimes \cdots \otimes h_{a_n}^\vee \to (h^\vee)^{\otimes n} \to Sh^\vee$ or $1 \to Sh^\vee$ in the diagrams, as appropriate. This map behaves well with respect to the necklace bracket:
\begin{prop}\label{prop:interwining-of-brackets}
    $\mathrm{tr}:S\mathcal{N} \to \mathcal{C}(1,Sh^\vee)$ intertwines the Poisson brackets $\mathrm{br}$ and $\{-,-\}$.
\end{prop}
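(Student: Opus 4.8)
The plan is to show that $\mathrm{tr}$ intertwines brackets by reducing, via naturality and the bi-derivation property, to checking the identity on generators. Since both $S\mathcal{N}$ and $\mathcal{C}(1,Sh^\vee)$ are Poisson algebras whose brackets are bi-derivations, and since $\mathrm{tr}$ is an algebra map, the Leibniz rule on both sides means it suffices to verify $\mathrm{tr}(\mathrm{br}(p \otimes q)) = \{\mathrm{tr}(p), \mathrm{tr}(q)\}$ when $p = a_1 \cdots a_m$ and $q = b_1 \cdots b_n$ are single cyclic paths (the generators of $S\mathcal{N}$ as an algebra). This is the standard reduction: a bracket that is a bi-derivation is determined by its values on algebra generators, provided the two sides agree there.

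First I would unfold the left-hand side. By definition, $\mathrm{br}(p \otimes q) = \sum_{i,j} \langle a_i, b_j \rangle\, e_{t(a_i)} a_{i+1} \cdots a_{i-1} b_{j+1} \cdots b_{j-1}$, a sum of cyclic paths obtained by excising the matched arrows $a_i$ and $b_j$ and splicing the two necklaces together. Applying $\mathrm{tr}$ then produces, for each summand, the corresponding closed string diagram: a single loop of evaluation/coevaluation strands threading through the objects $h^\vee_{a_k}$ (for $k \neq i$) and $h^\vee_{b_\ell}$ (for $\ell \neq j$), weighted by the sign $\langle a_i, b_j\rangle$. The key point is that an arrow and its double, $a_i$ and $b_j = \overline{a_i}$, contribute dual objects $h^\vee_{a_i}$ and $h^\vee_{b_j} = (h^\vee_{a_i})^\vee$, so removing them and reconnecting is exactly the act of composing two evaluation caps into a single longer trace.

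Second I would unfold the right-hand side. Here $\mathrm{tr}(p)$ and $\mathrm{tr}(q)$ are each a single trace-loop in $\mathcal{C}(1,Sh^\vee)$, landing in polynomial degree $m$ and $n$ respectively. The Poisson bracket $\{-,-\}$ is the bi-derivation built from the trace pairing \eqref{eqn:trace-pairing}, which contracts one factor $h^\vee_{a_i}$ from the first loop against one factor $h^\vee_{b_j}$ from the second whenever $a_i$ and $b_j$ are in involution, with coefficient $\langle a_i, b_j\rangle$, leaving the remaining $m+n-2$ factors untouched (this is the derivation property applied term by term). Graphically, contracting the $i$-th strand of $\mathrm{tr}(p)$ against the $j$-th strand of $\mathrm{tr}(q)$ via \eqref{eqn:P-bracket-usual} merges the two loops into one, producing precisely the trace-loop associated to $e_{t(a_i)} a_{i+1} \cdots a_{i-1} b_{j+1} \cdots b_{j-1}$. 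Summing over $i$ and $j$ recovers the sum obtained on the left.

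The crux, and the only genuine obstacle, is the diagrammatic bookkeeping: verifying that gluing two evaluation caps along the contracted pair $h^\vee_{a_i} \otimes h^\vee_{b_j} \to 1$ really yields the single evaluation loop over the concatenated cyclic word, including correct matching of source/target vertices $c_{t(a_i)}, c_{s(a_i)}$ and consistent orientation of the strands. I expect this to be a routine application of the snake identities together with the compatibility of the trace pairing with composition, so that \eqref{eqn:P-bracket-usual} applied inside a trace is literally ``compose and retrace.'' The signs take care of themselves since $\langle -,-\rangle$ is antisymmetric and appears identically on both sides; the contributions from constant paths vanish on both sides by the length-$0$ convention. Once the single-generator identity is established, the bi-derivation reduction completes the proof.
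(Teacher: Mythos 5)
Your proposal is correct and follows essentially the same route as the paper: reduce via the bi-derivation property and the fact that $\mathrm{tr}$ is an algebra map to checking the identity on single cyclic paths, then expand $\{\mathrm{tr}(p),\mathrm{tr}(q)\}$ term by term as trace-pairing contractions of one strand from each loop, and observe (via the snake identities) that each contraction merges the two trace loops into the trace of the spliced path $e_{t(a_i)}a_{i+1}\cdots a_{i-1}b_{j+1}\cdots b_{j-1}$. The paper carries out exactly this computation in string diagrams, and likewise defers the constant-path edge cases, which you correctly note vanish on both sides.
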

\begin{proof}
    It suffices to show that $\mathcal{N} \to \mathcal{C}(1,Sh^\vee)$ intertwines the Lie bracket $\mathrm{br}$ on $\mathcal{N}$ with the Poisson bracket $\{-,-\}$ on $\mathcal{C}(1,Sh^\vee)$. Using that $\{-,-\}$ is a bi-derivation, we find that
    \begin{align*}
        \{\mathrm{tr}(a_1 \cdots a_m) \otimes \mathrm{tr}(b_1 \cdots b_n)\}
        &= \;
        \vcenter{\hbox{\includegraphics[scale=1]{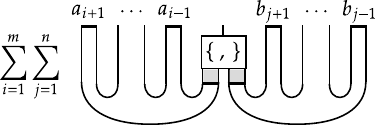}}}\\
        =\;
        \vcenter{\hbox{\includegraphics[scale=1]{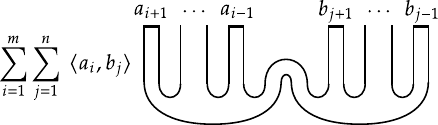}}}
        &=\;
        \vcenter{\hbox{\includegraphics[scale=1]{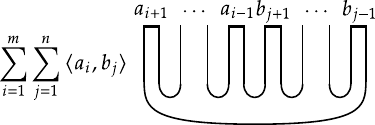}}}\\
        &= \mathrm{tr}(\mathrm{br}(a_1\cdots a_m \otimes b_1 \cdots b_n)),
    \end{align*}
    where we implicitly mapped to $Sh^\vee$ in all the diagrams  --- as usual.
\end{proof}

In the proof of Proposition \ref{prop:interwining-of-brackets} we did not spell out what happens when there are occurences of constant paths --- let us assure the reader that everything works out fine. We will continue to suppress these edge cases, such as in the proof of Theorem \ref{thm:intertwining-BV-ops}.

\begin{expl}
    Suppose $R = k$ is a field and take $\mathcal{C} = \mathrm{mod}_R = \mathrm{vect}_k$. Then each $c_i$ is necessarily a finite dimensional vector space and $\mathcal{C}(1,Sh^\vee) \cong \mathscr{O}[\mathrm{Rep}_{\overline{Q}}(c)]$ is the algebra of polynomial functions on the space of $k$-linear representations of $\overline{Q}$ with dimension vector $(\dim c_i)_{i}$. The map $\mathrm{tr}$ is simply the trace map of Bocklandt, Le Bruyn and Ginzburg \cite{ginzburg2001,bocklandt2002}: indeed, $\mathrm{tr}(a_1 \cdots a_m)$ is the polynomial function that sends a representation $f$ to $\mathrm{tr}(f_{a_m} \circ \cdots \circ f_{a_1})$.
\end{expl}

This gives the well-known manifestation of the necklace Lie bracket $\mathrm{br}$ in the representation point of view, albeit through a different perspective via more general categorical language. But what about the augmented necklace bracket?

\begin{defn}
    Define $\{-,-\}^+:Sh^\vee \otimes Sh^\vee \to Sh^\vee$ as the (quartic) bi-derivation obtained from the map $h^\vee \otimes h^\vee \to Sh^\vee$ whose $ab$'th component $h_a^\vee \otimes h_b^\vee \to Sh^\vee$ is
    \begin{equation}\label{eqn:P-bracket-augmented}
        \vcenter{\hbox{\includegraphics[scale=1]{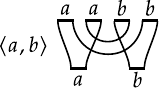}}}\;,
    \end{equation}
    where we have implicitly composed with $h_a^\vee \otimes h_a^\vee \otimes h_b^\vee \otimes h_b^\vee \to Sh^\vee$ at the top.
\end{defn}

\begin{prop}\label{prop:quartic-poisson-bracket}
    $(Sh^\vee,\{-,-\}^+)$ is a Poisson algebra in $\mathcal{C}$.
\end{prop}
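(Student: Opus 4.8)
My plan is to verify the two defining properties of a Poisson bracket for $\{-,-\}^+$---antisymmetry and the Jacobi identity---noting that it is a bi-derivation by construction. The essential new difficulty, compared with $\{-,-\}$, is that the degree argument available there fails here. Whereas the $ab$-component of $\{-,-\}$ lands in $S^0 h^\vee = 1$, making $\{-,-\}$ a degree $-2$ map and forcing its iterated bracket to vanish on $(h^\vee)^{\otimes 3}$ for degree reasons, the augmented bracket $\{-,-\}^+$ has $ab$-component landing in $S^4 h^\vee$ and hence \emph{raises} polynomial degree by $2$. The Jacobi identity must therefore be verified directly, and this is the heart of the matter.

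Antisymmetry I would dispatch first, exactly as before: the $ba$-component of the defining map agrees with the $ab$-component up to the symmetry $\sigma$ swapping the two inputs, under which the diagram \eqref{eqn:P-bracket-augmented} is manifestly invariant, while the scalar obeys $\langle b,a\rangle = -\langle a,b\rangle$; together these give $\{-,-\}^+ \circ \sigma_{h^\vee,h^\vee} = -\{-,-\}^+$. For the Jacobi identity I would then invoke the reduction of \S\ref{sec:bi-differential operators}: since $\{-,-\}^+$ is an antisymmetric bi-derivation, the map $\{-,\{-,-\}^+\}^+ \circ \mathfrak{S}_3$ is a tri-derivation on $Sh^\vee$, so by the tri-derivation analogue of Theorem~\ref{thm:!existence-of-diffops} it vanishes everywhere once it vanishes on $h^\vee \otimes h^\vee \otimes h^\vee$. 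As $h^\vee = \bigoplus_a h_a^\vee$, it suffices to treat each $h_a^\vee \otimes h_b^\vee \otimes h_c^\vee$ separately.

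The core computation is thus the vanishing of $\{-,\{-,-\}^+\}^+ \circ \mathfrak{S}_3$ on a triple of generators. I would expand it diagrammatically via \eqref{eqn:P-bracket-augmented}: the inner bracket contributes only when its two arguments are in involution, producing four output strands, against one of which the arrow $a$ is then paired by the outer bracket---again only under a further involution relation. Enumerating the cases dictated by the involution structure among $a$, $b$, $c$, and applying the snake identities to collapse the evaluation/coevaluation loops created by the insertions, the surviving diagrams should organise into pairs that cancel by the antisymmetry $\langle x,y\rangle = -\langle y,x\rangle$, the cancellation occurring across the three cyclic terms of $\mathfrak{S}_3$. This three-generator computation is the main obstacle. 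Conceptually it is the representation-theoretic shadow of the vanishing of the associated triple double bracket $\{\!\{-,-,-\}\!\}^+$ proved above---van den Bergh's principle that a Poisson double bracket induces genuine Poisson brackets on representation spaces---which I would use as a template; but in the general monoidal setting the cancellations have to be exhibited by hand, and the delicate book-keeping lies in tracking the strand permutations induced by $\mathfrak{S}_3$ alongside the snake-identity simplifications.
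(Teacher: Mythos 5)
Your proposal follows essentially the same route as the paper: antisymmetry is immediate from $\langle b,a\rangle=-\langle a,b\rangle$, the Jacobi identity is reduced via the tri-derivation argument of \S\ref{sec:bi-differential operators} to vanishing of $\{-,\{-,-\}^+\}^+\circ\mathfrak{S}_3$ on $h_a^\vee\otimes h_b^\vee\otimes h_c^\vee$, and this is then checked by a direct string-diagram computation in which the three cyclic terms cancel by antisymmetry of $\langle-,-\rangle$ (the paper likewise simplifies via snake identities and freely permutes strands at the top since one maps into the commutative monoid $Sh^\vee$). Your observation that the polynomial-degree argument used for $\{-,-\}^-$ fails here, forcing the explicit computation, is exactly the point distinguishing the two cases in the paper.
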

\begin{proof}
    Antisymmetry of $\{-,-\}^+$ is immediate, and so to show that it is a Poisson bracket it suffices to show that $\{-,\{-,-\}^+\}^+ \circ \mathfrak{S}_3$ vanishes on $h_a^\vee \otimes h_b^\vee \otimes h_c^\vee$ for all arrows $a,b,c$. Calculating, we have
    \begin{align*}
        \includegraphics[scale=1]{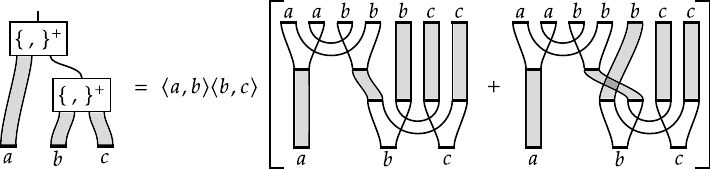}\\
        \vcenter{\hbox{\includegraphics[scale=1]{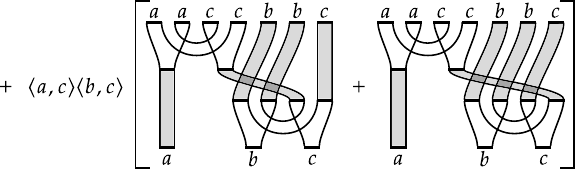}}}\;.
    \end{align*}
    Simplifying the string diagrams and permuting the (pairs) of strands at the tops (which we can do as we implicitly map into $Sh^\vee$), this becomes
    \begin{align*}
        \includegraphics[scale=1]{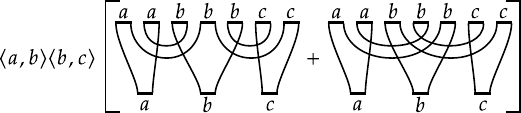}\\
        \vcenter{\hbox{\includegraphics[scale=1]{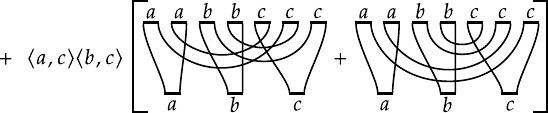}}}\;.
    \end{align*}
    Similarly, we get
    \begin{align*}
        \includegraphics[scale=1]{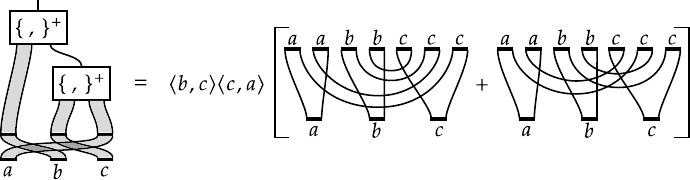}\\
        \vcenter{\hbox{\includegraphics[scale=1]{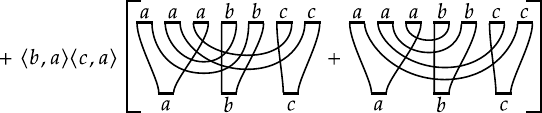}}}\;
    \end{align*}
    and
    \begin{align*}
        \includegraphics[scale=1]{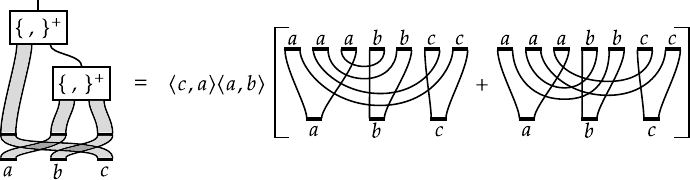}\\
        \vcenter{\hbox{\includegraphics[scale=1]{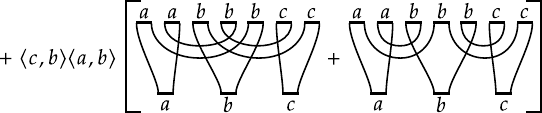}}}\;.
    \end{align*}
    Thus, the result follows by antisymmetry of $\langle-,-\rangle$.
\end{proof}

As before, applying the functor $\mathcal{C}(1,-)$ to the Poisson algebra $(Sh^\vee,\{-,-\}^+)$ yields an $R$-Poisson algebra. The underlying commutative algebra is again $\mathcal{C}(1,Sh^\vee)$ and, abusing notation, we denote its Poisson bracket by $\{-,-\}^+$ too.

\begin{prop}
    The map $\mathrm{tr}:S\mathcal{N} \to \mathcal{C}(1,Sh^\vee)$ intertwines the Poisson brackets $\mathrm{br}^+$ and $\{-,-\}^+$.
\end{prop}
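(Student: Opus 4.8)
The plan is to follow the strategy of the proof of Proposition \ref{prop:interwining-of-brackets} verbatim, but with the degree $-2$ bracket $\{-,-\}$ replaced everywhere by the quartic bracket $\{-,-\}^+$. Since $\mathrm{tr}$ is a map of commutative $R$-algebras, the Poisson bracket on $S\mathcal{N}$ is the biderivation extension of $\mathrm{br}^+$, and $\{-,-\}^+$ is a biderivation on $\mathcal{C}(1,Sh^\vee)$, the Leibniz rule reduces the claim to checking the intertwining on generators. That is, it suffices to show that the restricted map $\mathcal{N} \to \mathcal{C}(1,Sh^\vee)$ intertwines the Lie bracket $\mathrm{br}^+$ with $\{-,-\}^+$, which amounts to comparing $\{\mathrm{tr}(a_1\cdots a_m),\mathrm{tr}(b_1\cdots b_n)\}^+$ with $\mathrm{tr}(\mathrm{br}^+(a_1\cdots a_m \otimes b_1 \cdots b_n))$ term by term.

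First I would invoke the biderivation property of $\{-,-\}^+$. Writing $\mathrm{tr}(a_1\cdots a_m)$ as the closed trace diagram obtained by gluing the strands $h_{a_1}^\vee,\dots,h_{a_m}^\vee$ into a loop (and likewise for the $b$'s), the biderivation property expands the bracket into a double sum $\sum_{i=1}^m\sum_{j=1}^n$, where in the $(i,j)$ summand the generating component \eqref{eqn:P-bracket-augmented} is applied to the $i$-th strand of the first loop and the $j$-th strand of the second, with the remaining strands carried along as spectators. Each summand is nonzero only when $a_i$ and $b_j$ are in involution, in which case it carries the factor $\langle a_i,b_j\rangle$, matching the summation in the formula for $\mathrm{br}^+$.

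The heart of the argument is to identify each surviving summand diagrammatically. The component \eqref{eqn:P-bracket-augmented} lands in the degree-$4$ part $h_{a_i}^\vee \otimes h_{a_i}^\vee \otimes h_{b_j}^\vee \otimes h_{b_j}^\vee \to Sh^\vee$, so applying it to the $i$-th and $j$-th strands \emph{consumes} those two strands and, via its coevaluations, \emph{doubles} them: it produces two copies of $h_{a_i}^\vee$ and two of $h_{b_j}^\vee$ while reconnecting the two trace-loops into a single loop. A degree count confirms this is consistent: the output has polynomial degree $m+n+2$, exactly the length of the cyclic word $a_i a_{i+1}\cdots a_{i-1}a_i b_j b_{j+1}\cdots b_{j-1}b_j$, in which $a_i$ and $b_j$ each appear twice and every other arrow once. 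Reading off the resulting closed trace, and using the freedom to permute (paired) strands since we map into $Sh^\vee$, this summand is precisely $\mathrm{tr}$ of that cyclic path. Summing over $(i,j)$ then reproduces $\mathrm{tr}(\mathrm{br}^+(a_1\cdots a_m \otimes b_1\cdots b_n))$.

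I expect the main obstacle to lie in this final bookkeeping step. In the $\{-,-\}$ case a single evaluation simply fuses the two loops, whereas the quartic insertion both reconnects the loops and duplicates the two bracketed arrows, so one must verify carefully that the coevaluations place the doubled $a_i$ and $b_j$ in the correct cyclic positions and that the spectator strands $a_{i+1}\cdots a_{i-1}$ and $b_{j+1}\cdots b_{j-1}$ close up in the expected order. As in Proposition \ref{prop:interwining-of-brackets}, I would suppress the edge cases involving constant paths $e_i$, which vanish on both sides.
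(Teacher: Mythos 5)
Your proposal is correct and follows essentially the same route as the paper's proof: reduce to generators of $\mathcal{N}$ via the bi-derivation property, expand $\{\mathrm{tr}(a_1\cdots a_m),\mathrm{tr}(b_1\cdots b_n)\}^+$ as a double sum over pairs $(i,j)$ weighted by $\langle a_i,b_j\rangle$, and identify each summand diagrammatically (the coevaluations duplicating $a_i$ and $b_j$ while fusing the two trace loops) with $\mathrm{tr}$ of the cyclic word $a_ia_{i+1}\cdots a_{i-1}a_ib_jb_{j+1}\cdots b_{j-1}b_j$. The paper's proof is precisely this computation carried out in string diagrams, so your verbal account, including the degree count $m+n+2$ and the suppression of constant-path edge cases, matches it step for step.
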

\begin{proof}
    \begin{align*}
        \{\mathrm{tr}(a_1 \cdots a_m) \otimes \mathrm{tr}(b_1 \cdots b_n)\}^+
        &= \;
        \vcenter{\hbox{\includegraphics[scale=1]{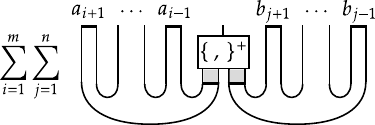}}}\\
        &=\;
        \vcenter{\hbox{\includegraphics[scale=1]{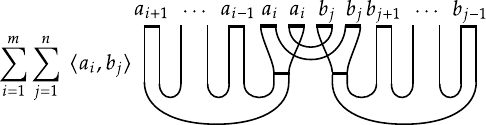}}}\\
        &=\;
        \vcenter{\hbox{\includegraphics[scale=1]{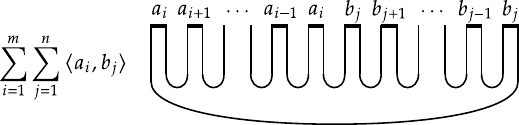}}}\\
        &= \mathrm{tr}(\mathrm{br}^+(a_1\cdots a_m \otimes b_1 \cdots b_n)).
    \end{align*}
\end{proof}

To summarise, we have two natural Lie brackets on $\mathcal{N}$, $\mathrm{br}^-$ and $\mathrm{br}^+$ (and hence two natural Poisson brackets on $S\mathcal{N}$). After painting the vertices of $\overline{Q}$ with dualisable objects of $\mathcal{C}$, we obtain a commutative monoid $Sh^\vee$ which itself carries two natural Poisson brackets: $\{-,-\}^- := \{-,-\}$ \eqref{eqn:P-bracket-usual} and $\{-,-\}^+$ \eqref{eqn:P-bracket-augmented}. The map $\mathrm{tr}:S\mathcal{N} \to \mathcal{C}(1,Sh^\vee)$ intertwines the brackets $\mathrm{br}^\pm$ and $\{-,-\}^\pm$. It is in this sense that the usual and augmented necklace brackets are two sides of the same coin: $\{-,-\}^-$ is canonically defined with evaluation maps, while $\{-,-\}^+$ is canonically defined with coevaluation maps. In fact, \textit{the categorical construction completely subsumes the necklace story}, as the following proposition shows.
\begin{prop}\label{prop:injectivity-of-tr}
    There exists a cocomplete closed linear symmetric monoidal category $\mathcal{C}$ with a choice of dualisable objects $c_i \in \mathcal{C}$ for which the trace map $\mathrm{tr}:S\mathcal{N} \to \mathcal{C}(1,Sh^\vee)$ is \textit{injective}.
\end{prop}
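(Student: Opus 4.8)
The plan is to choose $\mathcal{C}$ so that its morphism spaces are purely diagrammatic, with no relations beyond isotopy, so that $\mathrm{tr}$ literally records a monomial in cyclic paths as a disjoint union of coloured circles. Concretely, I would take $\mathcal{C}$ to be a cocompletion of the category of oriented $1$-dimensional cobordisms whose components are coloured by the vertices of $\overline{Q}$.

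First I would fix $\mathcal{C}_0$, the free additive $R$-linear symmetric monoidal category on dualisable objects $(c_i)_{i \in \overline{Q}_0}$ --- concretely the $R$-linearisation of coloured oriented $1$-cobordisms --- whose hom-spaces carry the standard oriented string-diagram basis. Since $\mathcal{C}_0$ is neither cocomplete nor closed, I would then set $\mathcal{C} := [\mathcal{C}_0^{\op}, \mathrm{mod}_R]$ with the Day convolution product: this is $R$-linear, $\otimes$-cocomplete and closed, and the Yoneda embedding $\yo : \mathcal{C}_0 \hookrightarrow \mathcal{C}$ is fully faithful, strong symmetric monoidal, and preserves duals, so each $\yo(c_i)$ stays dualisable; I take these as the required objects $c_i$. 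The point I would exploit repeatedly is that the unit $1 = \yo(1_{\mathcal{C}_0})$ is representable, hence compact, so $\mathcal{C}(1,-)$ preserves all colimits.

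Next I would compute $\mathcal{C}(1, Sh^\vee)$ combinatorially. As $\mathcal{C}_0$ is additive, $h^\vee = \bigoplus_a h_a^\vee$ lies in $\mathcal{C}_0$ and, $\yo$ being monoidal, each $(h^\vee)^{\otimes n}$ is representable. Commuting $\mathcal{C}(1,-)$ past the coproduct \eqref{eqn:graded-decomposition} and past the coinvariants defining $S^n$, then applying Yoneda, I would obtain
\begin{equation*}
    \mathcal{C}(1, Sh^\vee) \;\cong\; \bigoplus_{n \geq 0} \big(\mathcal{C}_0(1, (h^\vee)^{\otimes n})\big)_{S_n}.
\end{equation*}
Writing $h_a^\vee = c_{s(a)} \otimes c_{t(a)}^\vee$ and expanding $(h^\vee)^{\otimes n}$ over arrow-words, the diagram basis identifies $\mathcal{C}_0(1, h_{a_1}^\vee \otimes \cdots \otimes h_{a_n}^\vee)$ with the free $R$-module on colour-respecting matchings of the $c$-slots against the $c^\vee$-slots, decorated by finitely many free closed loops (which I would match up with constant paths). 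Since coinvariants of a permutation module is free on orbits, the degree-$n$ part is free on $S_n$-orbits of such matchings.

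Finally I would identify this basis with monomials. Following the arcs of a colour-respecting matching pairs each slot $c_{s(a_k)}$ with a slot $c_{t(a_j)}^\vee$ having $s(a_k)=t(a_j)$, and so decomposes the multiset $\{a_1,\dots,a_n\}$ uniquely into composable cycles, that is, into cyclic paths; this gives a bijection between $S_n$-orbits of matchings (with their loop data) and multisets of cyclic paths, i.e.\ monomials of $S\mathcal{N}$. Because the trace diagram of a single cyclic path is exactly its cyclic coevaluation --- one matched cycle, with no superfluous loops --- the algebra map $\mathrm{tr}$ sends the monomial $\gamma_1 \cdots \gamma_r$ to the basis element indexed by the disjoint union of the $\gamma_k$. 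Distinct monomials thus land on distinct basis elements, so $\mathrm{tr}$ is injective (indeed an isomorphism). The hard part will be establishing the diagrammatic basis theorem for $\mathcal{C}_0$ cleanly --- that morphisms $1 \to (h^\vee)^{\otimes n}$ are freely spanned by oriented matchings, with no hidden relations among closed loops --- together with careful bookkeeping of the $S_n$-coinvariants and the constant-path loops; confirming that Day convolution retains dualisability of the $c_i$ and that $\mathcal{C}(1,-)$ preserves the colimits building $Sh^\vee$ is routine but should be stated precisely.
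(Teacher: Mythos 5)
Your proposal is correct and follows essentially the same route as the paper: the paper likewise linearises a small category of $\overline{Q}_0$-coloured one-dimensional string diagrams, passes to the enriched presheaf category with Day convolution, computes $\mathcal{C}(1,Sh^\vee)$ objectwise as the free $R$-module on diagrams $\emptyset \to (a_1,\dots,a_n)$ modulo the $S_n$-action, and checks that $\mathrm{tr}$ carries basis monomials of $S\mathcal{N}$ to pairwise-distinct basis diagrams. The only substantive difference is that the paper works with \emph{unoriented} diagrams (so the $c_i$ are symmetrically self-dual, and $\mathrm{tr}$ is injective but not surjective, since unoriented matchings need not respect sources and targets), whereas your oriented cobordisms force every matching to decompose into composable cycles --- which is why your variant yields the stronger conclusion that $\mathrm{tr}$ is an isomorphism, and also why the ``diagrammatic basis theorem'' you flag as the hard part is actually immediate: one simply takes the cobordism category as the \emph{definition} of $\mathcal{C}_0$ (as the paper does), so that hom-spaces are free on diagrams by construction and no appeal to a freeness/universal-property characterisation is needed.
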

\begin{proof}
    The trick is to take $\mathcal{C}$ to be 'free enough', so that string diagrams, which have previously only been a \textit{notation} for morphisms, are the \textit{actual morphisms}. To this end, let $\mathrm{Diag}_{\overline{Q}}$ be the (small) category whose objects are finite tuples of vertices of $\overline{Q}$ --- which we think of as collections of $\overline{Q}_0$-coloured points --- and whose maps are (homotopy classes of) unoriented $\overline{Q}_0$-coloured `string diagrams' between such tuples. For example, the following is a map $(i,j,k) \to (l,i,k,l,j)$:
        \begin{equation*}
            \vcenter{\hbox{\includegraphics[scale=1]{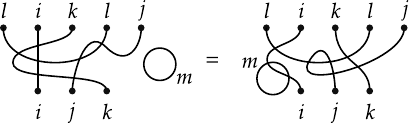}}}\;.
        \end{equation*}
    These diagrams are determined by their coloured loops and by which same-colour vertices are connected. Composition is given by vertical stacking. Notice that horizontal juxtaposition renders $\mathrm{Diag}_{\overline{Q}}$ a symmetric monoidal category with unit the empty tuple $\emptyset$, and that each vertex of $\overline{Q}$ --- viewed as an single-element tuple --- is symmetrically self-dual.
    Denote by $\mathcal{A}$ the $R$-linear symmetric monoidal category obtained from   $\mathrm{Diag}_{\overline{Q}}$ by taking free $R$-modules of all hom sets and consider its ($\mathrm{mod}_R$-enriched) presheaf category $\widehat{\mathcal{A}}:=[\mathcal{A}^{\op},\mathrm{mod}_R]$, its cocompletion\footnote{This is a well-known result in enriched category theory, see for instance \cite[Thm.\ 4.51]{kelly1982basic}.} (in the stronger, enriched sense). Day convolution gives $\widehat{\mathcal{A}}$ a closed symmetric monoidal structure, relative to which the Yoneda embedding $\yo:\mathcal{A} \hookrightarrow \widehat{\mathcal{A}}$ is strong symmetric monoidal \cite{day2006closed}.

    Take $\mathcal{C} = \widehat{\mathcal{A}}$, and let $c_i = \yo(i)$ for each vertex $i$, which is symmetrically self-dual. Then for each arrow $a$ we have
    \begin{equation*}
        h_a^\vee = c_{s(a)} \otimes c_{t(a)}^\vee = \yo(s(a)) \otimes \yo(t(a))^\vee \cong \yo(s(a),t(a)).
    \end{equation*}
    When the arrow $a$ is remembered (which will be the case when working with the $\overline{Q}_1$-indexed direct sum $h^\vee$) we abbreviate the pair $(s(a),t(a))$ by `$a$' itself and depict it as
    \begin{equation*}
        \includegraphics[scale=1]{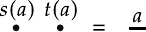},
    \end{equation*}
    akin to our earlier diagrammatic notation. Consider now the $R$-Poisson algebra $\mathcal{C}(1,Sh^\vee)$. We have
    \begin{equation*}
        \mathcal{C}(1,Sh^\vee) = \widehat{\mathcal{A}}\left(\yo(\emptyset),Sh^\vee\right)
        \cong (Sh^\vee)(\emptyset)
        \cong \bigsqcup_n\left(\bigoplus_{a_1,\dots,a_n}\left(h_{a_1}^\vee \otimes_{\widehat{\mathcal{A}}} \cdots \otimes_{\widehat{\mathcal{A}}} h_{a_n}^\vee\right) \right)_{S_n}(\emptyset),
    \end{equation*}
    where we used the Yoneda lemma, and where the colimits are computed in $\widehat{\mathcal{A}}$. But colimits in $\widehat{\mathcal{A}} = [\mathcal{A}^\op,\mathrm{mod}_R]$ are computed object-wise, so this is nothing but
    \begin{equation*}
        \bigsqcup_n\left(\bigoplus_{a_1,\dots,a_n}\left(h_{a_1}^\vee \otimes_{\widehat{\mathcal{A}}} \cdots \otimes_{\widehat{\mathcal{A}}} h_{a_n}^\vee\right)(\emptyset) \right)_{S_n},
    \end{equation*}
    where the colimits are now computed in $\mathrm{mod}_R$. Notice that
    \begin{equation*}
        \left(h_{a_1}^\vee \otimes_{\widehat{\mathcal{A}}} \cdots \otimes_{\widehat{\mathcal{A}}} h_{a_n}^\vee\right)(\emptyset)
        \cong
        \yo(a_1,\dots,a_n)(\emptyset) 
        = \mathcal{A}(\emptyset,(a_1,\dots,a_n))
    \end{equation*}
    is the free $R$-module on the set of string diagrams $\emptyset \to (a_1,\dots,a_n)$, and that the corresponding $S_n$-action on $\bigoplus_{a_1,\dots,a_n}\mathcal{A}(\emptyset,(a_1,\dots,a_n)) \cong \left\langle \bigsqcup_{a_1,\dots,a_n} \mathrm{Diag}_{\overline{Q}}(\emptyset,(a_1,\dots,a_n)) \right\rangle_R$ is simply the one induced by post-composition using the symmetry of $\mathrm{Diag}_{\overline{Q}}$, for example,
    \begin{equation}\label{eqn:012}
        \vcenter{\hbox{\includegraphics[scale=1]{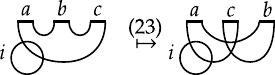}}}\;.
    \end{equation}
    Thus we have
    \begin{equation}\label{eqn:013}
        \mathcal{C}(1,Sh^\vee) \cong
        \left\langle
            \bigsqcup_n \left(\bigsqcup_{a_1,\dots,a_n} \mathrm{Diag}_{\overline{Q}}\left(\emptyset,(a_1,\dots,a_n)\right)\right)\Big/ S_n 
        \right\rangle_R,
    \end{equation}
    the free $R$-module on the set of $\overline{Q}_0$-coloured string diagrams from $\emptyset$ to finite tuples of arrows $(a_1,\dots,a_n)$\footnote{More precisely, to the tuple of vertices $(s(a_1),t(a_1),\dots,s(a_n),t(a_n))$ coloured pairwise by the arrows $a_i$.} considered up to the permutation action showcased in \eqref{eqn:012}. 
    
    We now turn to the trace map. The module $S\mathcal{N}$ is free on the set of (symmetric) products of cyclic paths in $\overline{Q}$. Consider two such basis elements 
    \begin{equation*}
        x = (e_{i_1})\cdots (e_{i_k}) \cdot (a_{1}^{1}\cdots a_{n_1}^{1}) \cdots (a_{1}^{p}\cdots a_{n_p}^{p}) \quad \text{and} \quad
        y = (e_{j_1})\cdots (e_{j_l}) \cdot (b_{1}^{1}\cdots b_{m_1}^{1}) \cdots (b_{1}^{q}\cdots b_{m_q}^{q}),
    \end{equation*}
    with each $n_\alpha,m_\beta > 0$.
    Under \eqref{eqn:013}, $\mathrm{tr}(x)$ and $\mathrm{tr}(y)$ are basis elements determined by the coloured diagrams
    \begin{equation*}
        \vcenter{\hbox{\includegraphics[scale=1]{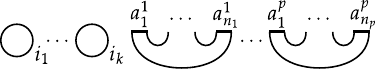}}}\quad \text{and} \quad
        \vcenter{\hbox{\includegraphics[scale=1]{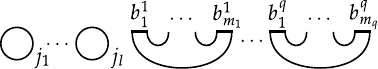}}}\;.
    \end{equation*}
    Suppose that $\mathrm{tr}(x) = \mathrm{tr}(y)$. Then these diagrams are related via the action of a permutation. In particular, $n_1 + \dots + n_p = m_1 + \dots + m_q$. Considering the strands and involved arrows shows that $p=q$ and that there is a permutation $\tau$ such that $a_1^\alpha \cdots a_{n_{\alpha}}^\alpha = b_1^{\tau(\alpha)}\cdots b_{m_{\tau(\alpha)}}^{\tau(\alpha)}$ as \textit{cyclic} paths for each $\alpha=1,\dots,p$. Meanwhile, considering the $\overline{Q}_0$-coloured loops shows that $k = l$ and that there is a permutation $\omega$ such that $i_\alpha = j_{\omega(\alpha)}$ for each $\alpha=1,\dots,k$. As a result, we have $x = y$.

    We therefore see that $\mathrm{tr}$ restricts to an \textit{injective} map between $R$-bases, and hence that $\mathrm{tr}$ is itself injective.
\end{proof}

\subsection{Invariant functions}
Let $G = \prod_{i \in \overline{Q}_0} \mathrm{Aut}(c_i)$. The projections $G \to \mathrm{Aut}(c_i)$ exhibit an action of $G$ on each object $c_i$. $G$ also acts on their duals $c_i^\vee$ by the maps $G \to \mathrm{Aut}(c_i^\vee)$ sending $(\phi_j)_{j \in \overline{Q}_0} \mapsto (\phi_i^{-1})^\vee$. As such, $G$ acts on each internal hom object $h_a$ and hence on $h$. All considered, $Sh^\vee$ carries a natural $G$-action. Denote by $(Sh^\vee)^G \hookrightarrow Sh^\vee$ the equaliser of this action.\footnote{Here we used that the actions of $G$ and $S_n$ on $(h^\vee)^{\otimes n}$ commute.} Since $Sh^\vee$ models `functions on $h$', one should think of $(Sh^\vee)^G$ as modelling the `$G$-invariant functions'.
\begin{prop}
    The monic map $(Sh^\vee)^G \hookrightarrow Sh^\vee$ exhibits $(Sh^\vee)^G$ as a sub-Poisson algebra of $(Sh^\vee,\{-,-\}^\pm)$. Furthermore, the map $\mathrm{tr}:S\mathcal{N} \to \mathcal{C}(1,Sh^\vee)$ factors through $\mathcal{C}(1,Sh^\vee)^G \cong \mathcal{C}(1,(Sh^\vee)^G) \hookrightarrow \mathcal{C}(1,Sh^\vee)$.
\end{prop}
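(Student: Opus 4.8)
The plan is to reduce both claims to one observation: the $G$-action on $Sh^\vee$ is by automorphisms of commutative monoids, and it is compatible with all the structural maps out of which the brackets $\{-,-\}^\pm$ and the trace map are assembled. Concretely, the action of $G$ on the duals $c_i^\vee$ was defined (via $(\phi_j)_j \mapsto (\phi_i^{-1})^\vee$) precisely so as to make the evaluation $\ev: c_i^\vee \otimes c_i \to 1$ and coevaluation $\coev: 1 \to c_i \otimes c_i^\vee$ equivariant, where $1$ carries the trivial action. Since the multiplication, unit and symmetry of $Sh^\vee$ are manifestly equivariant as well, every map built from these ingredients is $G$-equivariant, and this is the engine behind the whole proof.

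First I would record that $(Sh^\vee)^G \hookrightarrow Sh^\vee$ is a sub-commutative-monoid: equivariance of the multiplication and unit, together with the universal property of the equaliser defining $(Sh^\vee)^G$, produces the restricted multiplication and unit. For the bracket, the key point is that $\{-,-\}^\pm$ is itself $G$-equivariant. I would argue this using uniqueness in the correspondence of \S\ref{sec:bi-differential operators} between bi-derivations $Sh^\vee \otimes Sh^\vee \to Sh^\vee$ and their generating components $h^\vee \otimes h^\vee \to Sh^\vee$. For $g \in G$ the conjugate $g_* \circ \{-,-\}^\pm \circ (g_*^{-1} \otimes g_*^{-1})$ is again a bi-derivation (conjugation by a monoid automorphism preserves the bi-derivation property and order), and, because $g_*$ preserves the polynomial grading, its generating component is the $g$-conjugate of \eqref{eqn:P-bracket-usual} (resp. \eqref{eqn:P-bracket-augmented}). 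Since those components are built solely from $\ev$/$\coev$ and the fixed scalars $\langle a,b\rangle$, they are equivariant, so the conjugated bracket has the same generating component as $\{-,-\}^\pm$ and hence equals it. With equivariance in hand, the composite $\{-,-\}^\pm \circ (\iota \otimes \iota)$, where $\iota$ is the inclusion, equalises the $G$-action on the target and therefore factors through $\iota$ by the equaliser property, yielding the restricted bracket on $(Sh^\vee)^G$. The Poisson axioms (antisymmetry, Jacobi, bi-derivation) descend automatically, since $\iota$ is monic and they already hold on $Sh^\vee$ after composing with $\iota$.

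For the second claim I would first identify $\mathcal{C}(1,(Sh^\vee)^G) \cong \mathcal{C}(1,Sh^\vee)^G$: the functor $\mathcal{C}(1,-)$ preserves limits, so it carries the equaliser $(Sh^\vee)^G$ to the equaliser of the maps $\mathcal{C}(1,g_*)$, which is exactly the $G$-fixed subspace for the post-composition action. Because each $g_*$ is a monoid automorphism, post-composition acts on $\mathcal{C}(1,Sh^\vee)$ by $R$-algebra automorphisms, so $\mathcal{C}(1,Sh^\vee)^G$ is a subalgebra. As $\mathrm{tr}$ is an algebra map, it then suffices to check that the algebra generators---the classes $\mathrm{tr}(a_1\cdots a_n)$ of cyclic paths and $\mathrm{tr}(e_i)$ of constant paths---are $G$-invariant. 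But each of these is a $1$-point $1 \to Sh^\vee$ assembled from a cyclic arrangement of coevaluations (and evaluations, for constant paths) followed by the map into $Sh^\vee$; all these pieces are equivariant and $1$ carries the trivial action, so $g_* \circ \mathrm{tr}(w) = \mathrm{tr}(w)$ for every $w$ and every $g$. Hence $\mathrm{tr}$ lands in the subalgebra $\mathcal{C}(1,Sh^\vee)^G$ and factors as claimed.

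The one genuinely delicate step is the $G$-equivariance of the brackets: the brackets are not defined directly but as the unique bi-derivations extending their generating components, so equivariance of the components does not visibly imply equivariance of the extensions. The cleanest route, which I have taken above, is to invoke uniqueness of the extension rather than to track equivariance through the explicit formula of Proposition \ref{prop:explicit-diffop}; this sidesteps what would otherwise be a bookkeeping-heavy diagram chase. Everything else is a routine application of equivariance together with the universal property of equalisers and the limit-preservation of $\mathcal{C}(1,-)$.
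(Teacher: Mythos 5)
Your proof is correct and takes essentially the same route as the paper's: the paper likewise proves the first claim by observing that the unit and the composites $(Sh^\vee)^G \otimes (Sh^\vee)^G \to Sh^\vee \otimes Sh^\vee \to Sh^\vee$ (with the last map either the multiplication or $\{-,-\}^\pm$) equalise the $G$-action, and the second by checking $\phi \cdot \mathrm{tr}(a_1\cdots a_n) = \mathrm{tr}(a_1\cdots a_n)$ and $\phi \cdot \mathrm{tr}(e_i) = \mathrm{tr}(e_i)$ on generators. Your uniqueness-of-bi-derivation-extension argument for $G$-equivariance of the brackets makes explicit a step the paper leaves implicit, and is a valid way to justify it.
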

\begin{proof}
    It is clear that the unit $1 \to Sh^\vee$ equalises the $G$-action. Notice that the composite
    \begin{equation*}
        \begin{tikzcd}
        	{(Sh^\vee)^G \otimes (Sh^\vee)^G} & {Sh^\vee \otimes Sh^\vee} & {Sh^\vee}
        	\arrow[from=1-1, to=1-2]
        	\arrow[from=1-2, to=1-3]
        \end{tikzcd}
    \end{equation*}
    equalises the $G$-action when the rightmost map is either multiplication or the Poisson bracket $\{-,-\}^\pm$. This proves the first claim. For the second, given $\phi \in G$, one simply calculates that $\phi \cdot \mathrm{tr}(a_1 \cdots a_n) = \mathrm{tr}(a_1 \cdots a_n)$ and $\phi \cdot \mathrm{tr}(e_i) = \mathrm{tr}(e_i)$.
\end{proof}
It is natural to ask for when $S\mathcal{N} \to \mathcal{C}(1,Sh^\vee)^G$ is surjective. When $\mathcal{C}=\mathrm{vect}_k$, this question would be a problem in invariant theory.

\section{IBL algebra structure and the BV story}\label{subsec:lie-bialg-structure}
In \cite{schedler2005hopf} Schedler showed that the necklace Lie algebra $(\mathcal{N},\mathrm{br})$ carries a Lie cobracket $\delta = \delta^-$ such that the triple $(\mathcal{N},\mathrm{br},\delta)$ is an \textit{involutive Lie bialgebra} (IBL algebra). We assume from now on that $2 \in R$ is invertible, and in particular this allows us to match the normalisation conventions of the cobracket.\footnote{Invertibility of $2$ also means that we need not worry about whether $2$ is a zero divisor for certain $R$-modules.} The cobracket $\delta:\mathcal{N} \to \mathcal{N} \otimes \mathcal{N}$ is given by the simple formula
\begin{equation*}
    \delta(a_1\cdots a_n) = \frac{1}{2}\sum_{i,j=1}^n \langle a_i, a_j \rangle  e_{t(a_i)}a_{i+1} \cdots a_{j-1} \otimes e_{t(a_{j})} a_{j+1} \cdots a_{i-1},
\end{equation*}
which can be interpreted as resolving self-intersections of cyclic paths. Given that the necklace bracket is related to a simple structure on the representation variety, a natural question was then to ask whether the cobracket corresponds to some meaningful structure on the representation variety too. Recently, in \cite{perry2024graded}, this question was answered in the following sense: if one combines the necklace bracket and cobracket into a single BV operator on $\bigwedge \mathcal{N}$, then this BV operator can be related to a canonical (constant coefficients) BV structure on a certain representation variety via an `odd' trace map.

In this subsection we show that the augmented necklace Lie algebra can be given a natural IBL structure too, and that the resulting BV operator can be related to a canonical (quartic) BV structure in the same way, extending the results from \cite{perry2024graded}. By doing the constructions in more general settings, we in the process generalise the earlier results.

Define the \textit{augmented necklace cobracket} $\delta^+:\mathcal{N} \to \mathcal{N} \otimes \mathcal{N}$ by
\begin{equation*}
    \delta^+(a_1 \cdots a_n) = \frac{1}{2} \sum_{i,j=1}^n \langle a_i, a_j \rangle a_i a_{i+1} \cdots a_{j-1} a_j \otimes a_j a_{j+1} \cdots a_{i-1} a_i.
\end{equation*}
\begin{prop}
    The triple $(\mathcal{N},\mathrm{br}^+,\delta^+)$ is an involutive Lie bialgebra.
\end{prop}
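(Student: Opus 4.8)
The plan is to check the four defining conditions of an IBL algebra: that $\mathrm{br}^+$ is a Lie bracket, that $\delta^+$ is a Lie cobracket (co-antisymmetry together with co-Jacobi), that the two are compatible (the cobracket is a $1$-cocycle for the bracket), and that the involutivity relation $\mathrm{br}^+\circ\delta^+ = 0$ holds. The first is already in hand, since $(\mathcal{N},\mathrm{br}^+)$ is the augmented necklace Lie algebra. Co-antisymmetry of $\delta^+$ is immediate from the defining formula: interchanging the summation indices $i\leftrightarrow j$ swaps the two tensor factors while replacing $\langle a_i,a_j\rangle$ by $\langle a_j,a_i\rangle = -\langle a_i,a_j\rangle$, so that $\tau\circ\delta^+ = -\delta^+$, where $\tau$ transposes the factors. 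The substance therefore lies in co-Jacobi, compatibility, and involutivity.

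Rather than attack these three separately, I would organise them through the BV formalism of \S\ref{sec:The-setting}. Encode the pair $(\mathrm{br}^+,\delta^+)$ as a single odd operator $\Delta^+ = \Delta^+_{\mathrm{br}} + \Delta^+_\delta$ on the free (exterior) commutative monoid $\bigwedge\mathcal{N}$, where $\Delta^+_{\mathrm{br}}$ is the second-order operator applying $\mathrm{br}^+$ to each unordered pair of wedge-factors (lowering wedge-degree by one) and $\Delta^+_\delta$ is the derivation applying $\delta^+$ to each factor (raising it by one). By the standard dictionary, $(\Delta^+)^2 = 0$ is equivalent to the conjunction of all IBL axioms, sorted by the shift in wedge-degree: the degree $-2$ part of $(\Delta^+)^2$ is the Jacobi identity for $\mathrm{br}^+$; the degree $+2$ part is co-Jacobi for $\delta^+$; and the degree $0$ part is, on $\bigwedge^1\mathcal{N}=\mathcal{N}$, precisely involutivity $\mathrm{br}^+\circ\delta^+ = 0$, and on $\bigwedge^2\mathcal{N}$ precisely the cocycle compatibility.

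The payoff of this reformulation is the finiteness principle used throughout \S\ref{sec:The-setting}: by Proposition \ref{prop:graded-comm-of-diffops}, $(\Delta^+)^2$ is again a graded differential operator of bounded order on $\bigwedge\mathcal{N}$, and it kills the unit, so by Theorem \ref{thm:!existence-of-diffops-graded} it is determined by its restriction to a bounded range of wedge-powers. Hence, to prove $(\Delta^+)^2 = 0$ it suffices to evaluate on single cyclic paths (yielding co-Jacobi and involutivity) and on products of two cyclic paths (yielding compatibility), the Jacobi component being already known. Each of these reduces to a finite combinatorial identity: iterating $\delta^+$ or composing it with $\mathrm{br}^+$ produces a double sum indexed by two marked positions on the cyclic word(s), and the task is to exhibit a sign-reversing involution on this index set pairing the terms. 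In every case the cancelling sign is supplied by the antisymmetry $\langle a,b\rangle = -\langle b,a\rangle$ of the pairing (with the factor $\tfrac12$ handled using $2\in R^\times$), together with the symmetry of the inserted arrow-pairs.

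The main obstacle is the combinatorial bookkeeping in co-Jacobi and involutivity. There one must resolve two self-intersections of a cyclic path (co-Jacobi), or first split and then re-glue (involutivity), and track how the two inserted pairs of in-involution arrows interact as the marked positions range over the word; the cancellation is term-by-term, but the case analysis according to the relative cyclic order of the marked positions is delicate. This is the augmented analogue of Schedler's verification for $(\mathcal{N},\mathrm{br}^-,\delta^-)$ in \cite{schedler2005hopf}, and I expect each cancellation to mirror the classical one, with ``remove a pair in involution'' systematically replaced by ``insert a pair in involution.''
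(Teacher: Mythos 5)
Your proposal is correct in substance but organised differently from the paper's proof, so a comparison is in order. The paper's proof is a direct, axiom-by-axiom verification: it simply asserts that one mimics the computation of the IBL conditions for $(\mathcal{N},\mathrm{br}^-,\delta^-)$ carried out in Appendix C of \cite{perry2024graded}, the many extra terms of the augmented case cancelling pairwise. You instead repackage co-Jacobi, involutivity and the cocycle condition as the vanishing of $(\Delta^+)^2$ on low wedge-powers of $\bigwedge\mathcal{N}$ via the differential-operator machinery of \S\ref{sec:The-setting}. This is legitimate --- it is precisely the dictionary the paper itself establishes in Proposition \ref{prop:oddIBL-BV} (in shifted form) and exploits for the graded variant in Theorem \ref{thm:graded-necklace-BV-alg} and Corollary \ref{cor:-grneck-is-odd-ibl} --- but be aware that it buys no reduction in labour here: unpacking $(\Delta^+)^2=0$ on $\bigwedge^1\mathcal{N}$ and $\bigwedge^2\mathcal{N}$ reproduces verbatim the identities (involutivity, co-Jacobi, cocycle) that the paper checks directly, so the combinatorial core is identical. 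Two points in your reduction need tightening. First, $(\Delta^+)^2=\tfrac12[\Delta^+,\Delta^+]$ has order $3$ by Proposition \ref{prop:graded-comm-of-diffops}, so Theorem \ref{thm:!existence-of-diffops-graded} only determines it on $S^{\leq 3}\Pi\mathcal{N}$; to legitimately restrict to wedge-degrees $1$ and $2$ you must split $(\Delta^+)^2=(\mathrm{br}^+)^2+[\mathrm{br}^+,\delta^+]+(\delta^+)^2$ and use the orders of the three summands separately ($3$, $2$ and $1$, each killing the unit): then $(\delta^+)^2$ is a derivation determined in degree $1$, $[\mathrm{br}^+,\delta^+]$ is determined in degrees $\leq 2$, and the only genuinely degree-$3$ information is $(\mathrm{br}^+)^2$, i.e.\ Jacobi --- which, as you say, is already in hand because $\{\!\{-,-\}\!\}^+$ is a Poisson double bracket. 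Second, the actual cancellation --- your proposed sign-reversing involution pairing terms --- is exactly the step the paper also leaves unperformed (``the extra terms cancel pairwise''), so your argument is complete only to the same extent as the paper's: both defer the same finite but delicate case analysis to the template of \cite{schedler2005hopf} and \cite{perry2024graded}.
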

\begin{proof}
    This is a tedious calculation that we do not spell out here. Instead, we refer the reader to Appendix C of \cite{perry2024graded}, which gives a proof of the IBL conditions for the usual necklace Lie bialgebra. To prove the proposition at hand, one mimics that calculation. While there are many extra terms appearing in the augmented case, they cancel pairwise. 
\end{proof}
Thus, the module $\mathcal{N}$ admits two natural IBL structures: one given by $(\mathrm{br}^-,\delta^-)$, and one given by $(\mathrm{br}^+,\delta^+)$. Each IBL structure gives rise to a BV operator on $\bigwedge \mathcal{N}$ in the usual way, which goes as follows:
\begin{itemize}
    \item Under $\mathrm{mod}_R \to \mathrm{smod}_R$, regard $(\mathcal{N},\mathrm{br}^\pm,\delta^\pm)$ as an IBL algebra in $\mathrm{smod}_R$ concentrated in even degree.
    \item Shifting $\mathcal{N} \mapsto \Pi\mathcal{N}$, we can view $\mathrm{br}^\pm$ and $\delta^\pm$ as \textit{odd} maps $S^2 \Pi \mathcal{N} \rightleftarrows \Pi \mathcal{N}$ in $\underline{\mathrm{smod}}_R$.
    \item Extend $\mathrm{br}^\pm:S^2 \Pi \mathcal{N} \to \Pi \mathcal{N}$ and $\delta^\pm: \Pi \mathcal{N} \to S^2 \Pi \mathcal{N}$ to odd 2nd and 1st order differential operators on $S\Pi\mathcal{N} \cong \bigwedge \mathcal{N}$, respectively.\footnote{It is understood that $\mathrm{br}^\pm$ vanishes on $S^{\leq 1} \Pi \mathcal{N}$, and that $\delta^\pm$ vanishes on $S^0 \Pi \mathcal{N}$.}
    \item The IBL algebra axioms imply that the odd 2nd order differential operator $\Delta^\pm := 2 \mathrm{br}^\pm - \delta^\pm$ squares to zero, so that $(S \Pi \mathcal{N},\Delta^\pm)$ is an $R$-BV algebra.\footnote{The reason for the coefficients `$2$' and `$-1$' is only for Theorem \ref{thm:intertwining-BV-ops} later.}
\end{itemize}
Explicitly, the BV operator $\Delta^\pm$ is given by
\begin{equation*}
    \Delta^\pm(x_1 \cdots x_n) = \sum_{i<j}(-1)^{i+j+1}2\mathrm{br}^\pm(x_ix_j)x_1 \cdots \hat{x}_i \cdots \hat{x}_j \cdots x_n - \sum_i(-1)^{i+1}\delta^\pm(x_i)x_1 \cdots \hat{x}_i \cdots x_n ,
\end{equation*}
for all elements $x_1 ,\dots,x_n \in \Pi\mathcal{N}$, where the hats denote omission.

Let us turn to the representation side of the story. Let $\mathcal{C}$ be a symmetric monoidal $\Pi$-category, and for simplicity assume that $\mathcal{C}$ is $\otimes$-cocomplete. For each vertex $i$ of $\overline{Q}$, fix a dualisable object $c_i \in \mathcal{C}$ together with an \textit{odd} map $\iota_i \in \underline{\mathcal{C}}(c_i,c_i)$ such that $\iota_i^2 = \mathrm{id}$ (i.e.\ an \textit{odd} involution). The reason for fixing these involutions is that the shift $\mathcal{N} \mapsto \Pi \mathcal{N}$ will need to be witnessed from the representation side. As before, we can consider the objects $h_a^\vee = c_{s(a)} \otimes c_{t(a)}^\vee$. Denote by $\widetilde{h}_{a}^\vee$ the coequaliser of the two maps $\pi \otimes h_a^\vee \to h_a^\vee$ given by
\begin{equation}\label{eqn:019}
    \vcenter{\hbox{\includegraphics[scale=1]{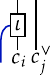}}}\quad \text{and} \quad
    \vcenter{\hbox{\includegraphics[scale=1]{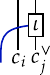}}}\;,
\end{equation}
where here $i = s(a)$ and $j = t(a)$. We have suppressed some notation: we really should write $\iota_i$ and $\iota_j^\vee$ above, where the dual map $\iota_j^\vee:\pi \otimes c_j^\vee \to c_j^\vee$ is defined to be
\begin{equation}\label{eqn:020}
    \vcenter{\hbox{\includegraphics[scale=1]{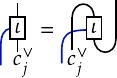}}}\;.
\end{equation}
That is, our convention is to keep $\pi$ fixed when taking the dual map (as opposed to considering a map $c_j^\vee \to c_j^\vee \otimes \pi^\vee$).\footnote{This avoids confusion: while $\pi$ is dualisable, $(\xi,\xi^{-1})$ is not necessarily duality data --- see Remark \ref{rem:dualisable?}.} While this convention behaves better with respect to the definition of odd maps in $\underline{\mathcal{C}}$, it means that one needs to be careful with signs: we have $\iota_i^2 = \mathrm{id}$ in $\underline{\mathcal{C}}$, but
\begin{equation*}
    \vcenter{\hbox{\includegraphics[scale=1]{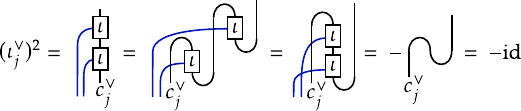}}}\;,
\end{equation*}
where we used $\sigma_{\pi,\pi} = -\mathrm{id}_{\pi \otimes \pi}$.

The $1$-points of the dual of $\widetilde{h}_a^\vee$, $\widetilde{h}_a$, which itself is an equaliser, are precisely the maps $c_i \to c_j$ in $\mathcal{C}$ that intertwine the involutions $\iota$. Put $\widetilde{h} = \bigoplus_{a}\widetilde{h}_a$. The $1$-points of $\widetilde{h}$ are the representations of $\overline{Q}$ (with vertices painted by the objects $c_i$) that intertwine the involutions $\iota$. This $\widetilde{h}$ is the `representation variety' of interest for us. To model `functions on $\widetilde{h}$', consider the free commutative monoid $S\widetilde{h}^\vee$ in $\mathcal{C}$. We will describe two BV operators on $S\widetilde{h}^\vee$.

By Theorem \ref{thm:!existence-of-diffops-graded}, an odd 2nd order differential operator $\Delta \in \underline{\mathrm{D}}_2(S \widetilde{h}^\vee)$ is equivalently the data of a map $\pi \otimes S^{\leq 2} \widetilde{h}^\vee \to S \widetilde{h}^\vee$ in $\mathcal{C}$. Setting this map to be $0$ on $\pi \otimes S^{\leq 1} \widetilde{h}^\vee$, it remains to specify a map $\pi \otimes S^2 \widetilde{h}^\vee \to S\widetilde{h}^\vee$. Since $\otimes$ preserves colimits, this is the same thing as a map $\phi:\pi \otimes h^\vee \otimes h^\vee \to S\widetilde{h}^\vee$ satisfying the following two properties:
\begin{enumerate}
    \item\label{itm:006} $\phi$ coequalises the $S_2$-action on $h^\vee \otimes h^\vee$; and
    \item\label{itm:005} The $ab$'th component of $\phi$ coequalises the two maps $\pi \otimes (\pi \otimes h_a^\vee) \otimes h_b^\vee \to \pi \otimes h_a^\vee \otimes h_b^\vee$ given by
    \begin{equation*}
        \vcenter{\hbox{\includegraphics[scale=1]{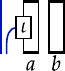}}} \quad \text{and} \quad
        \vcenter{\hbox{\includegraphics[scale=1]{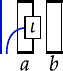}}}\;.
        \; \footnote{These properties together imply that $\phi$ factors through $h^\vee \twoheadrightarrow \widetilde{h}^\vee$ applied to the \textit{each} tensor factor $h^\vee$.}
    \end{equation*}
\end{enumerate}
Helpful for later calculations will be the following notation:
\begin{notation}
    Given two objects appearing as strands in a string diagram, each of which is either one of the $c_i$'s or its dual, a directed odd chord between them means `insert $\iota$ at the target strand' minus `insert $\iota$ at the source strand'. For example:
    \begin{equation*}
        \vcenter{\hbox{\includegraphics[scale=1]{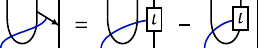}}}\;.
    \end{equation*}
\end{notation}
Note that the endpoints of a chord can be slid along strands (including around evaluation and coevaluation), and that reversing the direction of a chord costs a sign. Furthermore, $\widetilde{h}_a^\vee$ is nothing but the cokernel of the map $\pi \otimes h_a^\vee \to h_a^\vee$ given by
\begin{equation*}
    \vcenter{\hbox{\includegraphics[scale=1]{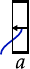}}}\;.
\end{equation*}
Appropriately, Property \ref{itm:005} (which says that $\phi$ factors through $\pi \otimes h^\vee \otimes h^\vee \to \pi \otimes \widetilde{h}^\vee \otimes h^\vee$) simply says that the precomposition of the $ab$'th component of $\phi$ by 
\begin{equation*}
    \includegraphics[scale=1]{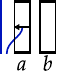}
\end{equation*}
vanishes.

\begin{lem}\label{lem:double-chord}
    For all arrows $a$ in $\overline{Q}$,
    \begin{equation*}
        \includegraphics[scale=1]{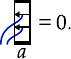}
    \end{equation*}
\end{lem}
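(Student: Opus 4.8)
The plan is to expand the double chord into four terms using the defining convention for a chord (insert $\iota$ at the target strand minus insert $\iota$ at the source strand) and then show that these cancel in pairs. Write the chord on $h_a^\vee = c_{s(a)} \otimes c_{t(a)}^\vee$ as $\gamma = B - A$, where $A$ inserts $\iota_{s(a)}$ on the $c_{s(a)}$ strand (the source) and $B$ inserts $\iota_{t(a)}^\vee$ on the $c_{t(a)}^\vee$ strand (the target); both are odd endomorphisms of $h_a^\vee$ in $\underline{\mathcal{C}}$. The double chord is then $\gamma \circ \gamma = A\circ A + B \circ B - A\circ B - B\circ A$, and the goal is to see that this vanishes.

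First I would dispose of the two \emph{mixed} terms $A\circ B$ and $B\circ A$. Since $A$ and $B$ are odd maps supported on disjoint tensor factors of $h_a^\vee$, the interchange law in $\underline{\mathcal{C}}$ — whose compatibility with composition rests precisely on $\sigma_{\pi,\pi} = -\mathrm{id}_{\pi\otimes\pi}$ — forces them to anticommute, $A\circ B = -B\circ A$. Diagrammatically this is exactly the statement that sliding one chord endpoint past the other, i.e.\ reversing a direction, costs a sign. Hence $A\circ B + B\circ A = 0$ and the mixed terms drop out.

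Next I would treat the two \emph{same-strand} terms. On the source strand $A\circ A = \iota_{s(a)}^2 = \mathrm{id}$, directly from the hypothesis $\iota_i^2 = \mathrm{id}$ in $\underline{\mathcal{C}}$. On the target strand, however, $B\circ B = (\iota_{t(a)}^\vee)^2$, and here the keep-$\pi$-fixed convention for the dual map intervenes: as recorded in the displayed sign identity preceding the lemma (where one uses $\sigma_{\pi,\pi} = -\mathrm{id}$), the dual of an odd involution squares to $-\mathrm{id}$ rather than to $\mathrm{id}$, so $B\circ B = -\mathrm{id}$. Therefore $A\circ A + B\circ B = \mathrm{id} - \mathrm{id} = 0$, and combining with the previous step yields $\gamma\circ\gamma = 0$, as claimed.

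The only delicate point — and the step I expect to require the most care — is the sign $(\iota_{t(a)}^\vee)^2 = -\mathrm{id}$: it is exactly the manifestation of $\sigma_{\pi,\pi} = -\mathrm{id}$ under the convention of keeping $\pi$ fixed when dualising, and it is what makes the source and target contributions cancel rather than add. Everything else is routine bookkeeping once the chords are expanded, and the conclusion is in fact robust: because the two squares are $+\mathrm{id}$ and $-\mathrm{id}$ while the mixed terms always anticommute, the double chord vanishes irrespective of the chord's orientation.
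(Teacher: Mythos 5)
Your proposal is correct and takes essentially the same route as the paper: the paper's proof is a single diagrammatic computation that expands the double chord into exactly your four terms, with the same-strand squares giving $+\mathrm{id}$ (from $\iota_{s(a)}^2 = \mathrm{id}$) and $-\mathrm{id}$ (from $(\iota_{t(a)}^\vee)^2 = -\mathrm{id}$, the sign coming from $\sigma_{\pi,\pi} = -\mathrm{id}_{\pi\otimes\pi}$ under the keep-$\pi$-fixed dualising convention), and the two mixed terms cancelling by the super-interchange law. You correctly isolate the one delicate sign, so there is nothing to add.
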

\begin{proof}
    Writing $i = s(a)$ and $j=t(a)$, we have
    \begin{equation*}
        \includegraphics[scale=1]{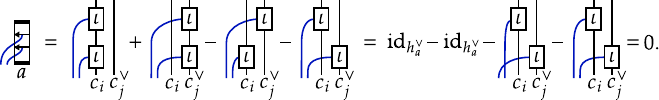}
    \end{equation*}
\end{proof}

Equipped with the chord notation and Lemma \ref{lem:double-chord}, we can show that the maps $\phi^\pm:\pi \otimes h^\vee \otimes h^\vee \to S\widetilde{h}^\vee$ defined below satisfy Properties \ref{itm:006} and \ref{itm:005}. Define $\phi^-$ to have $ab$'th component
\begin{equation*}
    \vcenter{\hbox{\includegraphics[scale=1]{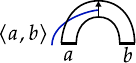}}}\;,
\end{equation*}
and $\phi^+$ to have $ab$'th component
\begin{equation*}
    \vcenter{\hbox{\includegraphics[scale=1]{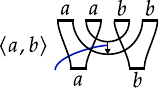}}}\;.
\end{equation*}

\begin{rem}
    These diagrams are essentially the same as the defining diagrams of the Poisson brackets $\{-,-\}^\pm$ --- the only difference is that we have added a chord.
\end{rem}
\begin{prop}
    The maps $\phi^\pm:\pi \otimes h^\vee \otimes h^\vee \to S\widetilde{h}^\vee$ uniquely factor through $\pi \otimes h^\vee \otimes h^\vee \to \pi \otimes S^2 \widetilde{h}^\vee$, and hence we can view them as maps $\pi \otimes S^2 \widetilde{h}^\vee \to S \widetilde{h}^\vee$.
\end{prop}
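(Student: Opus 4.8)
The plan is to verify the two conditions singled out before the statement --- Property \ref{itm:006} ($S_2$-symmetry) and Property \ref{itm:005} (factoring through the involution coequaliser on each tensor factor) --- which were shown to be precisely what is needed for a map $\pi \otimes h^\vee \otimes h^\vee \to S\widetilde{h}^\vee$ to descend to $\pi \otimes S^2\widetilde{h}^\vee$. Once both hold, the factoring map is produced by the universal property of the colimit $\pi \otimes h^\vee \otimes h^\vee \twoheadrightarrow \pi \otimes S^2\widetilde{h}^\vee$; since $\pi \otimes -$ is an equivalence it preserves this epimorphism, which simultaneously yields the uniqueness asserted in the statement.

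First I would establish Property \ref{itm:006}. As the objects $h_a^\vee, h_b^\vee$ are even, the swap $\sigma_{h^\vee,h^\vee}$ carries no Koszul sign, so genuine symmetry of $\phi^\pm$ is what must be checked. Exchanging $a \leftrightarrow b$ in the defining diagram reverses the scalar, $\langle a,b\rangle \mapsto \langle b,a\rangle = -\langle a,b\rangle$ by antisymmetry of the pairing, and at the same time reverses the direction of the odd chord, which costs a further sign (as recorded after the chord notation was introduced). Since we are mapping into the commutative monoid $S\widetilde{h}^\vee$, the strand pairs at the top may be freely permuted, and the two signs cancel, giving $\phi^\pm \circ (\mathrm{id}_\pi \otimes \sigma_{h^\vee,h^\vee}) = \phi^\pm$.

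Next I would establish Property \ref{itm:005}, which (as noted just before Lemma \ref{lem:double-chord}) amounts to showing that precomposing the $ab$-component of $\phi^\pm$ with the single chord defining $\widetilde{h}_a^\vee$ vanishes. Because $\phi^\pm$ already carries a chord on the relevant strands, this precomposition produces a configuration with \emph{two} chords on the $a$-part, which vanishes by Lemma \ref{lem:double-chord}. This step is the main obstacle: it is exactly why the chord was inserted into the definitions of $\phi^\pm$ in the first place, and its verification is where the delicate sign bookkeeping with the odd involutions and the identity $\sigma_{\pi,\pi} = -\mathrm{id}_{\pi\otimes\pi}$ lives --- all of which is packaged into Lemma \ref{lem:double-chord}. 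By the symmetry from the previous paragraph, coequalising on the $a$-strand forces coequalising on the $b$-strand as well, so $\phi^\pm$ factors through $\pi \otimes \widetilde{h}^\vee \otimes \widetilde{h}^\vee$ on both factors. Combining this with the $S_2$-symmetry, $\phi^\pm$ descends uniquely along $\pi \otimes \widetilde{h}^\vee \otimes \widetilde{h}^\vee \twoheadrightarrow \pi \otimes S^2\widetilde{h}^\vee$, giving the desired map $\pi \otimes S^2\widetilde{h}^\vee \to S\widetilde{h}^\vee$; by Theorem \ref{thm:!existence-of-diffops-graded} this is exactly the data of an odd second-order differential operator on $S\widetilde{h}^\vee$.
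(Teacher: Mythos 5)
Your proposal is correct and takes essentially the same route as the paper's own proof: both verify Property \ref{itm:006} by cancelling the sign from antisymmetry of $\langle-,-\rangle$ against the sign from reversing the chord (after freely permuting the top factors inside $S\widetilde{h}^\vee$), and both reduce Property \ref{itm:005} to the vanishing of a double chord, i.e.\ Lemma \ref{lem:double-chord}. The only detail you gloss over is that for $\phi^+$ the precomposed chord does not immediately form a double chord with the existing one --- the paper first uses the implicitly applied projection $h_a^\vee \to \widetilde{h}_a^\vee$ at the top to reposition a chord before invoking Lemma \ref{lem:double-chord} --- but this is a routine manipulation entirely within the strategy you describe.
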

\begin{proof}
    We need to show that $\phi^-$ and $\phi^+$ satisfy Properties \ref{itm:006} and \ref{itm:005}. Firstly, $\phi^-$:
    \begin{enumerate}
        \item It suffices to show that $\phi^-_{ba} \circ (\mathrm{id}_\pi \otimes \sigma_{h_a^\vee,h_b^\vee}) = \phi^-_{ab}$, where the subscripts denote the components of $\phi^-$. Indeed,
        \begin{equation*}
            \vcenter{\hbox{\includegraphics[scale=1]{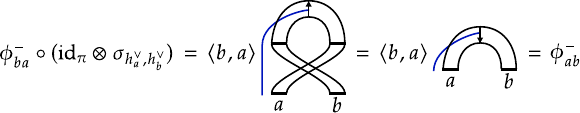}}}\;.
        \end{equation*}

        \item Applying Lemma \ref{lem:double-chord} we have
        \begin{equation*}
            \vcenter{\hbox{\includegraphics[scale=1]{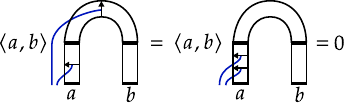}}}\;.
        \end{equation*}
    \end{enumerate}
    And now $\phi^+$:
    \begin{enumerate}
        \item As above, we find that
        \begin{equation*}
            \vcenter{\hbox{\includegraphics[scale=1]{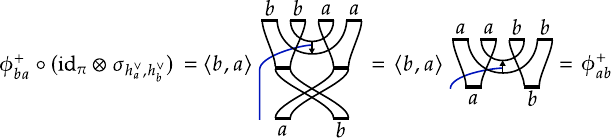}}}\;,
        \end{equation*}
        where we were free to permute the tensor factors at the top since we implicitly map to $S\widetilde{h}^\vee$.

        \item Note that
        \begin{equation*}
            \vcenter{\hbox{\includegraphics[scale=1]{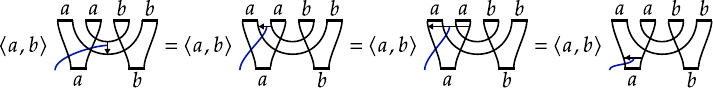}}}\;,
        \end{equation*}
        where in the second equality we used that we implicitly apply $h_a^\vee \to \widetilde{h}_a^\vee$ at the top. Thus, Property \ref{itm:005} holds as a direct consequence of Lemma \ref{lem:double-chord}.
    \end{enumerate}
\end{proof}

Denote by $\widetilde{\Delta}^\pm \in \underline{\mathrm{D}}_2(S\widetilde{h}^\vee)$ the odd 2nd order differential operators obtained by extending $\phi^\pm : \pi \otimes S^2 \widetilde{h}^\vee \to S \widetilde{h}^\vee$ --- in particular, they vanish on $S^{\leq 1} \widetilde{h}^\vee$. Explicitly, formula \eqref{eqn:explicit-graded-diffop} says that on $(h^\vee)^{\otimes N}$ they are given by
\begin{equation}\label{eqn:008}
    \vcenter{\hbox{\includegraphics[scale=1]{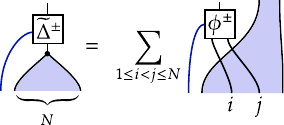}}}\;,
\end{equation}
where the dot denotes $(h^\vee)^{\otimes N} \twoheadrightarrow S^N \widetilde{h}^\vee$, and on the right hand side we implicitly map to $S \widetilde{h}^\vee$ at the top.

\begin{prop}\label{prop:BV-ops-on-Shtildevee}
    The differential operators $\widetilde{\Delta}^\pm$ are \textit{BV} operators. Thus, $(S\widetilde{h}^\vee,\widetilde{\Delta}^\pm)$ is a BV algebra in $\mathcal{C}$.
\end{prop}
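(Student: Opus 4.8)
The plan is to verify the two defining conditions of a BV operator for each of $\widetilde{\Delta}^\pm$: that $\widetilde{\Delta}^\pm \circ u = 0$ and that $(\widetilde{\Delta}^\pm)^2 = 0$. The first is immediate, since by construction $\widetilde{\Delta}^\pm$ vanishes on $S^{\leq 1}\widetilde{h}^\vee$, and the unit $u:1 \to S\widetilde{h}^\vee$ is precisely the structural inclusion $1 \cong S^0\widetilde{h}^\vee \hookrightarrow S^{\leq 1}\widetilde{h}^\vee$. So everything reduces to showing that the square vanishes.

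Since $\widetilde{\Delta}^\pm$ is odd and $2 \in R^\times$, we have $(\widetilde{\Delta}^\pm)^2 = \tfrac{1}{2}[\widetilde{\Delta}^\pm,\widetilde{\Delta}^\pm]$, which by Proposition \ref{prop:graded-comm-of-diffops} is a differential operator of order $\leq 3$. By Theorem \ref{thm:!existence-of-diffops-graded}, any order-$3$ differential operator on $S\widetilde{h}^\vee$ is determined by its restriction to $S^{\leq 3}\widetilde{h}^\vee$; as the zero operator is visibly one such operator restricting to $0$, it suffices to prove that $(\widetilde{\Delta}^\pm)^2$ vanishes on $S^{\leq 3}\widetilde{h}^\vee$. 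On $S^0$ and $S^1$ this is automatic, as $\widetilde{\Delta}^\pm$ already vanishes there, so the real content is the vanishing on $S^2\widetilde{h}^\vee$ and $S^3\widetilde{h}^\vee$.

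For $\widetilde{\Delta}^-$ this is a triviality of grading. The component map $\phi^-$ lands in $S^0\widetilde{h}^\vee \cong 1$, so by \eqref{eqn:008} the operator $\widetilde{\Delta}^-$ lowers polynomial degree by $2$; consequently $(\widetilde{\Delta}^-)^2$ lowers it by $4$. Its restriction to $S^2$ lands in $S^{-2} = 0$ and its restriction to $S^3$ lands in $S^{-1} = 0$ (equivalently, $\widetilde{\Delta}^-$ sends $S^2$ into $S^0$ and $S^3$ into $S^1$, on both of which it vanishes). Hence $(\widetilde{\Delta}^-)^2 = 0$ for free.

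The case of $\widetilde{\Delta}^+$ is the real obstacle, since $\phi^+$ lands in $S^4\widetilde{h}^\vee$ and so $\widetilde{\Delta}^+$ \emph{raises} polynomial degree by $2$; nothing vanishes for degree reasons. Here I would compute $(\widetilde{\Delta}^+)^2$ on $S^2\widetilde{h}^\vee$ and $S^3\widetilde{h}^\vee$ directly from the string-diagram formula \eqref{eqn:008}, expanding the two nested applications of $\phi^+$ into a sum indexed by which pairs of (inserted) strands the second operator acts on. The chord calculus is essential: each application of $\widetilde{\Delta}^+$ deposits a directed odd chord, the two chords are then manipulated using Lemma \ref{lem:double-chord} and the sign $\sigma_{\pi,\pi} = -\mathrm{id}_{\pi\otimes\pi}$ incurred when two $\pi$-strands cross, and the antisymmetry of the pairing $\langle-,-\rangle$ pairs off the resulting terms so that they cancel. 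I expect the bookkeeping on $S^3$ to be the most delicate part, with the pairwise-cancellation mechanism mirroring the one underlying the IBL-axiom verification for $(\mathcal{N},\mathrm{br}^+,\delta^+)$.
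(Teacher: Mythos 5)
Your proposal follows the paper's proof essentially verbatim: the same reduction via Proposition \ref{prop:graded-comm-of-diffops} (and Theorem \ref{thm:!existence-of-diffops-graded}) to checking vanishing on $S^{\leq 3}\widetilde{h}^\vee$, the same polynomial-degree argument disposing of $\widetilde{\Delta}^-$, and the same strategy for $\widetilde{\Delta}^+$, namely direct diagram computation on $S^2\widetilde{h}^\vee$ and $S^3\widetilde{h}^\vee$ using Lemma \ref{lem:double-chord}, the chord-reversal sign, $\sigma_{\pi,\pi}=-\mathrm{id}$, and antisymmetry of $\langle-,-\rangle$. The only difference is that you sketch rather than execute those two diagram computations; the paper carries them out explicitly, modelling the $S^3$ case on the proof of Proposition \ref{prop:quartic-poisson-bracket}, exactly as you anticipate.
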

\begin{proof}
    We already have that $\widetilde{\Delta}^\pm \circ u = 0$, so it remains to show that $(\widetilde{\Delta}^\pm)^2 = 0$. By Proposition \ref{prop:graded-comm-of-diffops} we have $(\widetilde{\Delta}^\pm)^2 = \frac{1}{2}[\widetilde{\Delta}^\pm,\widetilde{\Delta}^\pm] \in \underline{\mathrm{D}}_3(S\widetilde{h}^\vee)$,\footnote{We are using that $2 \in R$ is invertible in this argument, but the result holds generally: if $\Delta$ is order $k$ and odd, then $\Delta^2$ is order $2k - 1$.} so it suffices to show that $(\widetilde{\Delta}^\pm)^2$ vanishes on $S^{\leq 3}\widetilde{h}^\vee$. Since $\widetilde{\Delta}^-$ vanishes on $S^{\leq 1} \widetilde{h}^\vee$ and has polynomial degree $-2$, it is clear that $(\widetilde{\Delta}^-)^2 = 0$ by degree reasons. We now consider $\widetilde{\Delta}^+$.
    \begin{itemize}
        \item $(\widetilde{\Delta}^+)^2$ vanishes on $S^{\leq 1}\widetilde{h}^\vee$.
        \item To show that $(\widetilde{\Delta}^+)^2$ vanishes on $S^2 \widetilde{h}^\vee$, consider it on $h_a^\vee \otimes h_b^\vee$ (for all arrows $a,b$):
        \begin{align*}
            &\includegraphics[scale=1]{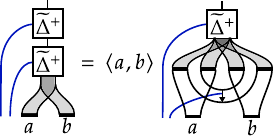} \\
            &\includegraphics[scale=1]{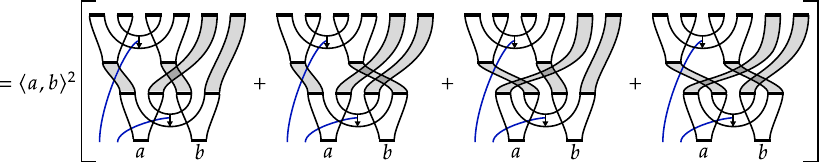} \\
            &\vcenter{\hbox{\includegraphics[scale=1]{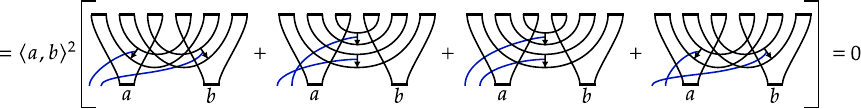}}} \;.
        \end{align*}
        \item For $S^3 \widetilde{h}^\vee$, consider $(\widetilde{\Delta}^+)^2$ on $h_a^\vee \otimes h_b^\vee \otimes h_c^\vee$ (for all arrows $a,b,c$), for which the calculation is similar to that in the proof of Proposition \ref{prop:quartic-poisson-bracket}:
        \begin{equation}\label{eqn:007}
            \vcenter{\hbox{\includegraphics[scale=1]{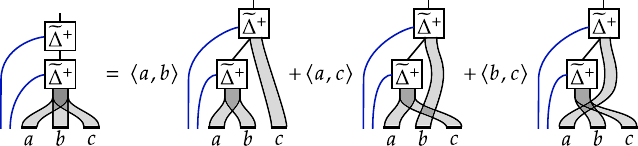}}}\;.
        \end{equation}
        Calculating the first term, we obtain
        \begin{align*}
            \includegraphics[scale=1]{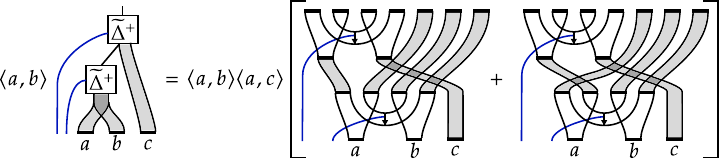}\\
            \vcenter{\hbox{\includegraphics[scale=1]{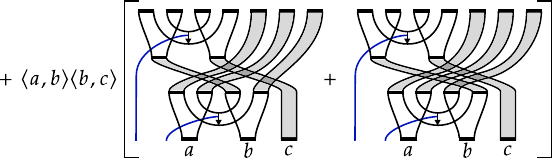}}}\;,
        \end{align*}
        where we used that $(\widetilde{\Delta}^\pm)^2$ vanishes on $S^2 \widetilde{h}^\vee$. Rewriting, this is 
        \begin{align*}
            \includegraphics[scale=1]{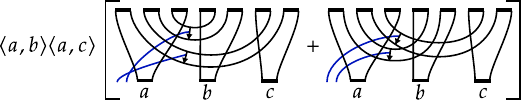}\\
            \vcenter{\hbox{\includegraphics[scale=1]{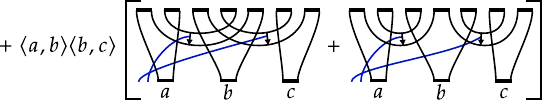}}}\;.
        \end{align*}
        Similarly, we have
        \begin{align*}
            \includegraphics[scale=1]{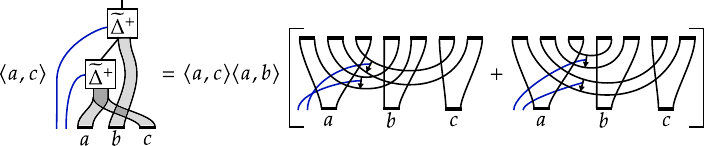}\\
            \includegraphics[scale=1]{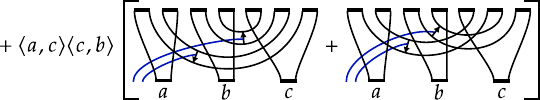}\;
        \end{align*}
        and
        \begin{align*}
            \includegraphics[scale=1]{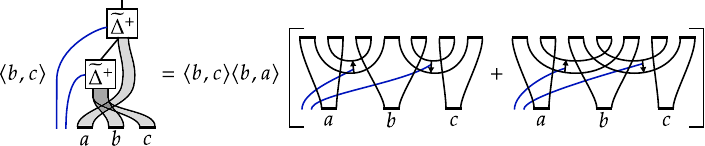}\\
            \vcenter{\hbox{\includegraphics[scale=1]{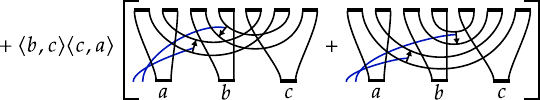}}}\;,
        \end{align*}
        so we see that \eqref{eqn:007} vanishes by antisymmetry of $\langle -,- \rangle$, the sign cost of reversing chords, and $\sigma_{\pi,\pi} = -\mathrm{id}$.
    \end{itemize}
\end{proof}

To relate the representation BV algebra $(S\widetilde{h}^\vee,\widetilde{\Delta}^\pm)$ in $\mathcal{C}$ to the (augmented) necklace BV algebra $(S\Pi \mathcal{N},\Delta^\pm)$ in $\mathrm{smod}_R$, we first apply the lax symmetric monoidal $\Pi$-functor $\underline{\mathcal{C}}(1,-):\mathcal{C} \to \mathrm{smod}_R$ to $(S\widetilde{h}^\vee,\widetilde{\Delta}^\pm)$, yielding an $R$-BV algebra $\underline{\mathcal{C}}(1,S\widetilde{h}^\vee)$ by Proposition \ref{prop:pi-functor-on-BV-algs} (whose BV operator we continue to denote by $\widetilde{\Delta}^\pm$).

Let $\mathrm{otr}:S\Pi \mathcal{N} \to \underline{\mathcal{C}}(1,S\widetilde{h}^\vee)$ be the map of commutative $R$-superalgebras defined on the generating set of cyclic paths by
\begin{equation*}
    \vcenter{\hbox{\includegraphics[scale=1]{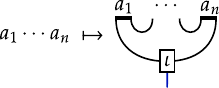}}} \quad \text{and} \quad
    \vcenter{\hbox{\includegraphics[scale=1]{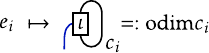}}}\;.
\end{equation*}
Notice that this map, called `odd trace', is well defined --- it is invariant under cyclic permutation of paths\footnote{One now sees why it is crucial to take the coequaliser $h^\vee \twoheadrightarrow \widetilde{h}^\vee$.} and is even, since we are viewing each path as an \textit{odd} element of $\Pi \mathcal{N}$ thanks to the shift.
\begin{thm}\label{thm:intertwining-BV-ops}
    The map $\mathrm{otr}:S\Pi \mathcal{N} \to \underline{\mathcal{C}}(1,S\widetilde{h}^\vee)$ intertwines the BV operators $\Delta^\pm$ and $\widetilde{\Delta}^\pm$.
\end{thm}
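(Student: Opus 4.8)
The plan is to prove the identity $\widetilde{\Delta}^\pm \circ \mathrm{otr} = \mathrm{otr} \circ \Delta^\pm$ by induction on polynomial degree (the number of cyclic-path factors), exploiting that $\mathrm{otr}$ is a map of commutative superalgebras and that both $\Delta^\pm$ and $\widetilde{\Delta}^\pm$ are odd second-order differential operators, hence obey the defining Leibniz relation of their associated brackets. Writing $D := \widetilde{\Delta}^\pm\circ\mathrm{otr} - \mathrm{otr}\circ\Delta^\pm$, it suffices by $R$-linearity to show that the odd map $D$ vanishes on each monomial $x_1\cdots x_n$ in cyclic paths.

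First I would settle the base cases. For $n=0$ both composites send the unit to $0$, since BV operators kill the unit and $\Delta^\pm(1)=0$. The decisive base case is $n=1$: for a single cyclic path $w=a_1\cdots a_n$ one has $\Delta^\pm(w)=-\delta^\pm(w)$ (the term $\mathrm{br}^\pm$ requires two path-factors), so the claim reduces to $\widetilde{\Delta}^\pm(\mathrm{otr}(w)) = -\mathrm{otr}(\delta^\pm(w))$. Here $\mathrm{otr}(w)$ lands in $S^n\widetilde{h}^\vee$, and the explicit formula \eqref{eqn:008} expresses $\widetilde{\Delta}^\pm$ on $S^n\widetilde{h}^\vee$ as a sum over the $\binom{n}{2}$ pairs of tensor factors, each contributing $\phi^\pm$ (evaluation-with-chord for $-$, coevaluation-with-chord for $+$) on that pair. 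Applied to the closed-up necklace diagram of $w$, contracting (resp. inserting at) the pair $(i,j)$ splits the single loop into two loops precisely when $a_i,a_j$ are in involution, reproducing the self-intersection resolution (resp. insertion) that defines $\delta^\pm$; the factor $\tfrac12$ of $\delta^\pm$ accounts for unordered-versus-ordered pairs.

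The second ingredient I would isolate is a bracket lemma: $\mathrm{otr}$ intertwines the associated BV brackets, $\{\mathrm{otr}(x),\mathrm{otr}(y)\}_{\widetilde{\Delta}^\pm} = \mathrm{otr}(\{x,y\}_{\Delta^\pm})$ for all $x,y$. On $S\Pi\mathcal{N}$ one checks directly that $\{x,y\}_{\Delta^\pm}=2\mathrm{br}^\pm(x\otimes y)$ on single paths, the order-one summand $\delta^\pm$ of $\Delta^\pm$ not contributing to the bracket. On the representation side $\{-,-\}_{\widetilde{\Delta}^\pm}$ is the odd bi-derivation extending $\phi^\pm$, which is the chord-decorated analogue of $\{-,-\}^\pm$. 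Since both sides are odd bi-derivations and $\mathrm{otr}$ is an algebra map, the appropriate analogue of Theorem \ref{thm:!existence-of-diffops-graded} reduces the identity to single cyclic paths, where it is a string-diagram computation identical to that of Proposition \ref{prop:interwining-of-brackets} (and its augmented counterpart) but carrying the extra chord.

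For the inductive step with $n\ge 2$, I would factor a monomial as $x\cdot z$ with $x$ a single cyclic path. Applying the Leibniz relation to $\Delta^\pm(xz)$ and to $\widetilde{\Delta}^\pm(\mathrm{otr}(x)\mathrm{otr}(z))=\widetilde{\Delta}^\pm(\mathrm{otr}(xz))$, the non-bracket terms agree by the inductive hypothesis applied to $x$ and $z$, leaving $D(xz)=(-1)^{|x|}\big(\{\mathrm{otr}(x),\mathrm{otr}(z)\}_{\widetilde{\Delta}^\pm}-\mathrm{otr}(\{x,z\}_{\Delta^\pm})\big)$, which vanishes by the bracket lemma. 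The hard part will be the $n=1$ base case: matching the diagrammatic second-order operator $\widetilde{\Delta}^\pm$ against the combinatorial cobracket $\delta^\pm$ demands careful bookkeeping of every sign — the parity shift $\mathcal{N}\mapsto\Pi\mathcal{N}$, the sign cost of reversing chords, the relation $\sigma_{\pi,\pi}=-\mathrm{id}$, and the antisymmetry of $\langle-,-\rangle$ — so as to produce exactly the sign in $-\delta^\pm$. As promised in the text, the constant-path edge cases are suppressed and handled just as in the Poisson setting.
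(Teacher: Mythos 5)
Your proposal is correct and follows essentially the same route as the paper: the paper likewise reduces the intertwining to the two identities $\{\mathrm{otr}(x),\mathrm{otr}(y)\}_{\widetilde{\Delta}^\pm} = \mathrm{otr}(2\mathrm{br}^\pm(x\cdot y))$ and $\widetilde{\Delta}^\pm(\mathrm{otr}(x)) = \mathrm{otr}(-\delta^\pm(x))$ for single cyclic paths $x,y$ (your bracket lemma and your $n=1$ base case), justifying this reduction by the way the second-order operator $\Delta^\pm = 2\mathrm{br}^\pm - \delta^\pm$ is constructed rather than by your explicit Leibniz induction, and then verifies both identities by chord-decorated string-diagram computations just as you describe. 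The one point your sketch of the $n=1$ case glosses over is that expanding the chord produces, besides the terms reproducing $\delta^\pm$, terms in which both $\iota$-insertions land on the same loop; the paper shows these equal their own negatives (using that one implicitly maps into $S\widetilde{h}^\vee$, i.e.\ the coequaliser relation) and hence vanish because $2 \in R$ is invertible.
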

\begin{proof}
    Because of the way the BV operator $\Delta^\pm = 2 \mathrm{br}^\pm - \delta^\pm$ on $S\Pi \mathcal{N}$ is constructed, it suffices to show that the following two equations hold for all $x,y \in \Pi \mathcal{N}$:
    \begin{equation}\label{eqn:009}
        \{\mathrm{otr}(x) , \mathrm{otr}(y)\}_{\widetilde{\Delta}^\pm} = \mathrm{otr}(2\mathrm{br}^\pm(x \cdot y)), \; \text{and}
    \end{equation}
    \begin{equation}\label{eqn:010}
        \widetilde{\Delta}^{\pm}(\mathrm{otr}(x)) = \mathrm{otr}(-\delta^\pm(x)),
    \end{equation}
    where $\{- , -\}_{\widetilde{\Delta}^\pm}$ is bracket associated to $\widetilde{\Delta}^\pm$. We only prove the `$-$' case: the `$+$' case is similar. First, we show Equation \eqref{eqn:009}. Given cyclic paths $a_1\cdots a_m$ and $b_1 \cdots b_n$ and using equation \eqref{eqn:008}, the bracket $\{\mathrm{otr}(a_1\cdots a_m) , \mathrm{otr}(b_1 \cdots b_n)\}_{\widetilde{\Delta}^-}$ can be expressed as
    \begin{equation*}
        \vcenter{\hbox{\includegraphics[scale=1]{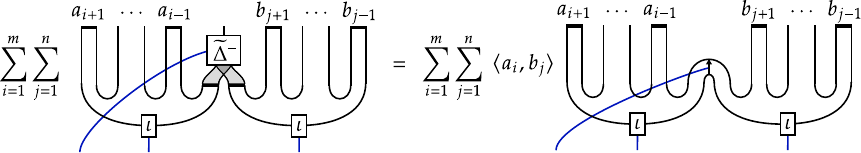}}},
    \end{equation*}
    where we implicitly map into $S\widetilde{h}^\vee$ at the top, as usual. Spelling out the chord, this is nothing but
    \begin{equation*}
        \includegraphics[scale=1]{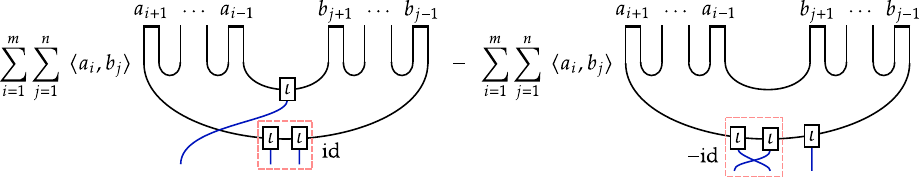}
    \end{equation*}
    which, upon simplifying as indicated by the red boxes, evaluates to
    \begin{equation*}
        \vcenter{\hbox{\includegraphics[scale=1]{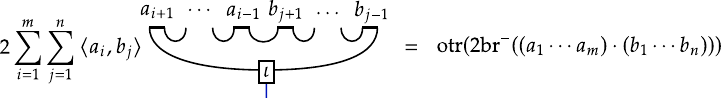}}}\;.
    \end{equation*}
    Next, Equation \eqref{eqn:010}. Given a cyclic path $a_1 \cdots a_n$ and applying equation \eqref{eqn:008} once again, $\widetilde{\Delta}^-(\mathrm{otr}(a_1 \cdots a_n))$ is given by
    \begin{equation*}
        \vcenter{\hbox{\includegraphics[scale=1]{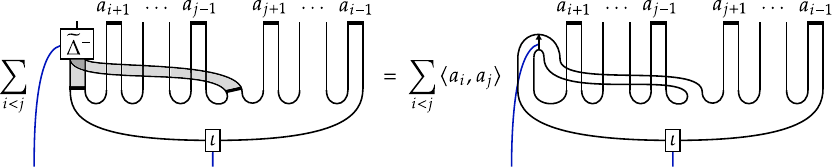}}}\;,
    \end{equation*}
    which is simplified to
    \begin{equation}\label{eqn:011}
        \vcenter{\hbox{\includegraphics[scale=1]{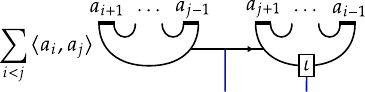}}}\;.
    \end{equation}
    Notice that 
    \begin{equation*}
        \vcenter{\hbox{\includegraphics[scale=1]{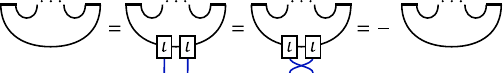}}}\;,
    \end{equation*}
    where in the second equality we used that we implicitly map to $S\widetilde{h}^\vee$ at the top. As $2 \in R$ is invertible, it follows that such terms vanish. Thus, \eqref{eqn:011} is nothing but
    \begin{equation*}
        \vcenter{\hbox{\includegraphics[scale=1]{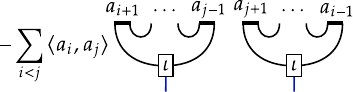}}}\;,
    \end{equation*}
    which is simply
    \begin{align*}
        \mathrm{otr}\left(-\sum_{i<j}\langle a_i, a_j \rangle(a_{i+1}\cdots a_{j-1}) \cdot (a_{j+1}\cdots a_{i-1})\right)
        &=
        \mathrm{otr}\left(-\frac{1}{2}\sum_{i,j}\langle a_i, a_j \rangle (a_{i+1}\cdots a_{j-1}) \cdot (a_{j+1}\cdots a_{i-1})\right)\\
        &=
        \mathrm{otr}(-\delta^-(a_1\cdots a_n)),
    \end{align*}
    completing the proof.
\end{proof}

\begin{expl}
    Suppose $R=k$ is a field (characteristic not $2$) and take $\mathcal{C} = \mathrm{svect}_k$. Each $c_i$ is then a finite dimensional $k$-superspace. The existence of the odd involution $\iota_i$ means that $\mathrm{sdim}(c_i) = 0$. We have that $\underline{\mathcal{C}}(1,S\widetilde{h}^\vee) \cong \mathscr{O}[\mathrm{Rep}_{\iota,\overline{Q}}(c)]$ is the superalgebra of (formal) polynomial functions on the superspace $\mathrm{Rep}_{\iota,\overline{Q}}(c) \subset \mathrm{Rep}_{\overline{Q}}(c)$ of $k$-superlinear representations of $\overline{Q}$ that intertwine the involutions $\iota$. The `$-$' case recovers the BV algebra of \cite{perry2024graded}, and $\mathrm{otr}(a_1\cdots a_m)$ can be interpreted as the function sending a representation $f$ to $\mathrm{str}(f_{a_m} \circ \cdots \circ f_{a_1} \circ \iota)$.
\end{expl}

We suspect that $\mathrm{otr}$ can be injective, so that the full necklace IBL structure is encoded in our categorical construction. We address this question in \S\ref{sec:Injectivity-of-otr}.

\section{The intrinsically graded variant}\label{sec:grNLBs}
In \S\ref{subsec:lie-bialg-structure} we needed to perform a shift $\mathcal{N} \mapsto \Pi \mathcal{N}$ in order to make sense of the full IBL structure $(\mathrm{br}^\pm,\delta^\pm)$ from the representation perspective. While on the necklace side this seems like a harmless trick, modelling this shift forced us to make a \textit{choice} of odd involutions (which may not even exist). In this section we show that such choices need not be made, provided we replace $\mathcal{N}$ with a closely-related graded variant `$\mathcal{N}_\text{gr}$'. This is the `$p=1$' case of \cite{perry2024graded}, introduced prior for single-vertex quivers in \cite{barannikov2014matrix}.

Since the shift should be avoided, the natural algebraic structure for the BV story should be the one found on $\Pi \mathcal{N}$, \textbf{not} on $\mathcal{N}$. In the language of \cite{perry2024graded} this structure is a \textit{degree 1 involutive Lie bialgebra}, but we call it an \textit{odd IBL algebra} in this text, for brevity.\footnote{Terminology on such structures has not yet been settled. Appendix B of \cite{perry2024graded} compares related notions.} The supermodule $\mathcal{N}_\text{gr}$ will turn out to have this structure. Let us recall (and generalise) its definition.
\begin{defn}\label{defn:odd-IBL}
    Let $\mathcal{C} \in \mathrm{SymMon}\Pi\text{-}\mathrm{Cat}_R$. An \textbf{odd IBL algebra} in $\mathcal{C}$ is an object $A \in \mathcal{C}$ together with \textit{odd} maps $\mathrm{br}:A \otimes A \to A$ and $\delta:A \to A\otimes A$ that are \textit{symmetric}\footnote{Symmetric simply means that $ \mathrm{br} \circ \sigma = \mathrm{br}$ and $\sigma \circ \delta = \delta$.} and satisfy the following four equations in $\underline{\mathcal{C}}$:
    \begin{gather*}
        \mathrm{br} \circ (\mathrm{br} \otimes \mathrm{id}) \circ \mathfrak{S}_3 = 0, \quad
        \mathfrak{S}_3 \circ (\delta \otimes \mathrm{id}) \circ \delta = 0, \quad
        \mathrm{br} \circ \delta = 0, \quad \text{and}\\
        \delta \circ \mathrm{br} + \mathfrak{S}_2 \circ (\mathrm{br} \otimes \mathrm{id}) \circ (\mathrm{id} \otimes \sigma) \circ (\delta \otimes \mathrm{id}) \circ \mathfrak{S}_2 = 0,
    \end{gather*}
    where $\mathfrak{S}_2 = \mathrm{id} + \sigma$. These are the \textit{Jacobi}, \textit{coJacobi}, \textit{involutivity} and \textit{cocycle} conditions, respectively.
\end{defn}

\begin{rem}
    If $A$ is an odd IBL algebra, then $\Pi A$ is an IBL algebra in the usual sense --- and vice versa. For details on how the maps $\mathrm{br}$ and $\delta$ are shifted, we refer the reader to \cite{perry2024graded}.
\end{rem}
\begin{rem}
    In $\mathrm{smod}_R$, the cocycle condition can be written in the more-familiar form
    \begin{equation*}
        \delta(\mathrm{br}(x , y)) = (-1)^{|x| + 1}\mathrm{ad}_x(\delta(y)) + (-1)^{(|x|+1)|y|+1}\mathrm{ad}_y(\delta(x)).
    \end{equation*}
\end{rem}

The connection between odd IBL algebras and BV algebras is quite direct:
\begin{prop}\label{prop:oddIBL-BV}
    Suppose that $\mathcal{C}$ is $\otimes$-cocomplete, and let $\mathrm{br}:A \otimes A \to A$ and $\delta:A \to A \otimes A$ be odd symmetric maps in $\underline{\mathcal{C}}$. If the odd 2nd order differential operator $\mathrm{br} + \delta$ on $SA$ is a BV operator (i.e.\ it squares to zero), then $(A,\mathrm{br},\delta)$ is an odd IBL algebra. The converse holds if $3 \in R$ is not a zero divisor for the module $\mathcal{C}(A,S^3 A)$.
\end{prop}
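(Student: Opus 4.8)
The plan is to realise $\Delta := \mathrm{br} + \delta$ as an odd order-$\le 2$ differential operator on $SA$ and to extract the four odd IBL conditions from $\Delta^2 = 0$ by splitting this equation according to polynomial degree. Concretely, I would extend $\mathrm{br}:S^2 A \to A$ to the odd order-$2$ operator vanishing on $S^{\le 1}A$, and $\delta:A \to S^2 A \hookrightarrow SA$ to the odd order-$1$ operator (a derivation, since it kills the unit) vanishing on $S^0 A$, both via Theorem \ref{thm:!existence-of-diffops-graded}. As $\Delta$ kills the unit and, by Proposition \ref{prop:graded-comm-of-diffops} together with $2 \in R^\times$, the operator $\Delta^2 = \tfrac12[\Delta,\Delta]$ has order $\le 3$, Theorem \ref{thm:!existence-of-diffops-graded} reduces $\Delta^2 = 0$ to vanishing on $S^{\le 3}A$ (the $S^0$ piece being automatic). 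Writing $\Delta^2 = \mathrm{br}^2 + [\mathrm{br},\delta] + \delta^2$ with summands of polynomial degrees $-2,0,+2$, these vanish independently, and by Proposition \ref{prop:graded-comm-of-diffops} they have orders $\le 3$, $\le 2$, $\le 1$ respectively.

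Next I would match each summand with an IBL condition. For $\mathrm{br}^2$: it vanishes on $S^{\le 2}A$ for degree/order reasons, while its value on $S^3 A$, pulled back along the epimorphism $A^{\otimes 3} \twoheadrightarrow S^3 A$, is the Jacobiator $\mathrm{br}\circ(\mathrm{br}\otimes\mathrm{id})\circ\mathfrak{S}_3$; so $\mathrm{br}^2 = 0$ iff the Jacobi identity holds. For $[\mathrm{br},\delta]$ (order $\le 2$): it kills the unit, its value on $S^1 A$ is $\mathrm{br}\circ\delta$ (the involutivity map, which lands in $A$, so the translation is immediate), and its value on $S^2 A$ is the cocycle expression; since the cocycle expression is symmetric in its output, passing between its vanishing in $A\otimes A$ and the vanishing of the coinvariant-valued operator costs only a factor of $2\in R^\times$. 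Being order $\le 2$, $[\mathrm{br},\delta]=0$ iff both involutivity and the cocycle condition hold.

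The delicate summand is $\delta^2$, which governs coJacobi and is the source of the hypothesis on $3$. Being a derivation it is determined by its restriction to $A$, and a short Leibniz computation gives $\delta^2|_A = 2\,p_3\circ C$, where $C := (\delta\otimes\mathrm{id})\circ\delta$ and $p_3:A^{\otimes 3}\to S^3 A$ is the projection; the two Leibniz terms coincide after $p_3$ because $\delta$ is symmetric. Thus $\delta^2 = 0$ iff $p_3\circ C = 0$, whereas coJacobi reads $\mathfrak{S}_3\circ C = 0$ in $\mathcal{C}(A,A^{\otimes 3})$. The comparison rests on three facts: $C$ is invariant under transposing its first two outputs (cocommutativity), whence the full symmetriser satisfies $\sum_{\sigma\in S_3}\sigma\circ C = 2\,\mathfrak{S}_3\circ C$; the full symmetriser annihilates $\ker p_3$ (since $\sum_{\sigma}\sigma(w-\tau w)=0$ for every transposition $\tau$); and $p_3\circ\mathfrak{S}_3 = 3\,p_3$. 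For the forward implication, $p_3\circ C = 0$ forces $C$ to factor through $\ker p_3$, hence $\sum_\sigma\sigma\circ C = 0$ and therefore $\mathfrak{S}_3\circ C = 0$, with no hypothesis on $3$. For the converse, $\mathfrak{S}_3\circ C = 0$ yields only $3\,p_3\circ C = 0$ after applying $p_3$, so one recovers $p_3\circ C = 0$ (equivalently $\delta^2 = 0$) precisely when $3$ is not a zero divisor on $\mathcal{C}(A,S^3 A)$, which is where $\delta^2|_A$ lives.

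Assembling, the forward direction follows by deriving the operator-level vanishing from $\Delta^2 = 0$ and translating to the stated conditions (all translations being free or costing invertible factors); the converse follows by upgrading the four conditions to $\mathrm{br}^2 = [\mathrm{br},\delta] = \delta^2 = 0$ and summing to $\Delta^2 = 0$, using $3$ not a zero divisor only for the coJacobi-to-$\delta^2$ step. I expect the main obstacle to be exactly this last comparison, disentangling the cyclic symmetriser $\mathfrak{S}_3$ from the coinvariant projection $p_3$: it is the unique point where the ambient symmetrisation sits on the \emph{output} of the operation rather than its input, which breaks the symmetry between the two implications and forces the arithmetic hypothesis on $3$.
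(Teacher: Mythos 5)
Your strategy coincides with the paper's: the proof there transports \cite[Prop.~2.3]{perry2024graded} into the categorical setting, and its skeleton is the same as yours --- extend $\mathrm{br}$ and $\delta$ to odd operators on $SA$ via Theorem~\ref{thm:!existence-of-diffops-graded}, split $\Delta^2=\mathrm{br}^2+[\mathrm{br},\delta]+\delta^2$ by polynomial degree, match the three pieces with Jacobi, involutivity plus cocycle, and coJacobi, and observe that the forward direction consumes only invertibility of $2$ while the converse consumes the non-zero-divisor hypothesis on $3$ exactly at the coJacobi-to-$\delta^2$ step. Your identities $\sum_{\sigma\in S_3}\sigma\circ C=2\,\mathfrak{S}_3\circ C$ and $p_3\circ\mathfrak{S}_3=3\,p_3$, and the resulting placement of the arithmetic hypotheses, agree with the paper.

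There is, however, one step whose justification does not survive the stated generality: the implication $p_3\circ C=0\Rightarrow\sum_{\sigma\in S_3}\sigma\circ C=0$. You argue that $C$ factors through $\ker p_3$ and that the full symmetriser kills $\ker p_3$ because $\sum_{\sigma}\sigma(w-\tau w)=0$. In an arbitrary $\otimes$-cocomplete linear symmetric monoidal ($\Pi$-)category there are no elements and kernels need not exist; and even granting a kernel, identifying it with the ``span of the differences $w-\tau w$'' is an exactness statement about $\mathrm{mod}_R$, not a categorical fact. The correct argument --- and precisely the device the paper's proof highlights --- is that $\sum_{\sigma\in S_3}\sigma\colon A^{\otimes 3}\to A^{\otimes 3}$ coequalises the $S_3$-action (precomposition with any $\tau$ merely permutes its summands), so by the universal property of the coequaliser $p_3$ it factors as $I_3\circ p_3$ through a norm map $I_3\colon S^3A\to A^{\otimes 3}$; then $p_3\circ C=0$ gives $\sum_{\sigma}\sigma\circ C=I_3\circ p_3\circ C=0$ at once. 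The same factorisation $\mathrm{id}+\sigma=I_2\circ p_2$ is what makes your ``costs only a factor of $2$'' step on $S^2A$ rigorous (where, incidentally, the restriction of $[\mathrm{br},\delta]$ to $S^2A$ is the cocycle expression only up to involutivity-type terms such as $\mathrm{br}(\delta(x))\cdot y$, which the already-established $S^1$-part lets you discard). These $I_n$ are exactly the maps the paper introduces, and their failure to be monic in general is the reason the arithmetic hypotheses cannot be removed. With this one repair your proof is complete and is essentially the paper's.
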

\begin{proof}
    This is \cite[Prop.\ 2.3]{perry2024graded} in a more general context, and one simply interprets its proof diagrammatically. The maps `$I_n$' of that proof are here the maps $S^nA \to A^{\otimes n}$ induced by $\sum_{\tau \in S_n} \sigma_\tau:A^{\otimes n}\to A^{\otimes n}$. However, these are not in general monic. In addressing this problem, the forward implication uses that $2$ is not a zero divisor for the module $\mathcal{C}(A,A^{\otimes 3})$, nor for the module $\mathcal{C}(A^{\otimes 2},A^{\otimes2})$ --- which is the case as $2$ is assumed invertible. The backwards implication does not use any assumptions on $2$, but uses the assumption on $3$ specified in the claim.
\end{proof}

Denote by $(R\overline{Q})_{\text{gr}}$ the $R$-superalgebra\footnote{For clarity: we mean one that is unital and associative, that is, a monoid in $\mathrm{smod}_R$.} whose underlying $R$-algebra is $R \overline{Q}$ and whose $\mathbb{Z}_2$-grading is the one induced by declaring $|a| = 0$ and $|\overline{a}| = 1$ for all $a \in Q_1$. In particular, $|e_i| = 0$ and $|a_1\cdots a_n| = \#\{i\;|\; a_i \notin Q_1\}$. Let $\mathcal{N}_\text{gr} := (R\overline{Q})_{\text{gr}} / [(R\overline{Q})_{\text{gr}},(R\overline{Q})_{\text{gr}}]$ be the quotient of $(R\overline{Q})_{\text{gr}}$ by its \textit{graded} commutator. The imposed relations on paths are
\begin{equation*}
    a_1 \cdots a_n = (-1)^{|a_1||a_2\cdots a_n|}a_2 \cdots a_n a_1 = \dots = (-1)^{|a_1 \cdots a_{n-1}||a_n|}a_na_1\cdots a_{n-1}.
\end{equation*}
Notice that while $\mathcal{N}_\text{gr}$ is spanned by closed paths, a closed path may well be zero as a consequence of the relations. Unlike in $\mathcal{N}$, endpoints now matter (up to a sign). Nevertheless, $\mathcal{N}_\text{gr}$ is free: a set of closed paths that are \textit{nonzero} in $\mathcal{N}_{\text{gr}}$ is linearly independent in $\mathcal{N}_{\text{gr}}$ if and only if the underlying cyclic paths are pairwise distinct, and letting $\mathscr{B}$ be any maximal such set gives an isomorphism $\mathcal{N}_\text{gr} \cong \langle \mathscr{B}\rangle_R$ as modules.

We will show that $\mathcal{N}_\text{gr}$ carries two canonical odd IBL structures, $(\mathrm{br}^-_\text{gr},\delta^-_\text{gr})$ and $(\mathrm{br}^+_\text{gr},\delta^+_\text{gr})$ --- graded analogues of what we've already seen. The `$-$' case was already introduced in \cite{perry2024graded}, called the \textit{graded necklace Lie bialgebra}. We will derive the structure maps $(\mathrm{br}^\pm_\text{gr},\delta^\pm_\text{gr})$ \textit{from the representation side}, and prove that they satisfy the odd IBL conditions from there. This turns out to simplify matters greatly.

Fix a $\otimes$-cocomplete symmetric monoidal $\Pi$-category $\mathcal{C}$. For each vertex $i$ fix a dualisable object $c_i \in \mathcal{C}$. Put $h^\vee_{\text{gr}} = \bigoplus_a \left(\pi^{|a|} \otimes h_a^\vee\right)$, where $h_a^\vee = c_{s(a)} \otimes c_{t(a)}^\vee$ as before, and consider the free commutative monoid $Sh_\text{gr}^\vee$. Define $\mathrm{tr}_{\text{gr}}:
S\mathcal{N}_\text{gr} \to \underline{\mathcal{C}}(1,Sh_\text{gr}^\vee)$ to be the map of commutative $R$-superalgebras which sends
\begin{equation*}
    \vcenter{\hbox{\includegraphics[scale=1]{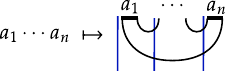}}} \quad \text{and} \quad
    \vcenter{\hbox{\includegraphics[scale=1]{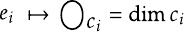}}}\;,
\end{equation*}
where each blue strand denotes either $1$ or $\pi$, as appropriate. This is well defined.

The monoid $Sh_\text{gr}^\vee$ admits two natural BV operators. To describe them, consider first the canonical \textit{symmetric} pairing $\langle -,- \rangle_\text{gr}:\overline{Q}_1 \times \overline{Q}_1 \to \mathbb{Z}$ given by
\begin{equation*}
    \langle a,b \rangle_\text{gr} =
    \begin{cases}
        1, \; \text{if $a = \overline{b}$ or $b = \overline{a}$,}\\
        0, \; \text{otherwise.}
    \end{cases}
\end{equation*}
Define maps $\psi^\pm:\pi \otimes h_\text{gr}^\vee \otimes h_\text{gr}^\vee \to Sh_\text{gr}^\vee$ in $\mathcal{C}$ as follows: $\psi^-$ has $ab$'th component
\begin{equation*}
    \includegraphics[scale=1]{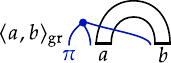}\;
\end{equation*}
and $\psi^+$ has $ab$'th component
\begin{equation*}
    \vcenter{\hbox{\includegraphics[scale=1]{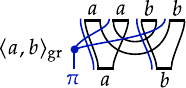}}}\;.
\end{equation*}
Since these maps coequalise the $S_2$-action on $h_\text{gr}^\vee \otimes h_\text{gr}^\vee$, they can be viewed as odd maps $S^2 h_\text{gr}^\vee \to Sh_\text{gr}^\vee$ in $\underline{\mathcal{C}}$. Let $\nabla^\pm \in \underline{\mathrm{D}}_2(Sh_\text{gr}^\vee)$ be the odd 2nd order differential operators obtained by extending $\psi^\pm$ --- in particular, they vanish on $S ^{\leq 1} h_\text{gr}^\vee$.
\begin{prop}
    The differential operators $\nabla^\pm$ are BV operators. Thus, $(Sh_\text{gr}^\vee,\nabla^\pm)$ is a BV algebra in $\mathcal{C}$.
\end{prop}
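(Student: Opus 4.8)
The plan is to verify the two BV axioms $\nabla^\pm \circ u = 0$ and $(\nabla^\pm)^2 = 0$, following the same strategy as the proof of Proposition \ref{prop:BV-ops-on-Shtildevee}. The first axiom is immediate: by construction $\nabla^\pm$ vanishes on $S^{\leq 1} h_\text{gr}^\vee$, and the unit $u:1 \to Sh_\text{gr}^\vee$ factors through $S^0 h_\text{gr}^\vee$. For the squaring condition, since $\nabla^\pm$ is an odd order-$2$ differential operator, Proposition \ref{prop:graded-comm-of-diffops} gives $(\nabla^\pm)^2 = \tfrac{1}{2}[\nabla^\pm,\nabla^\pm] \in \underline{\mathrm{D}}_3(Sh_\text{gr}^\vee)$ (using $2 \in R^\times$). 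By the graded existence-and-uniqueness result (Theorem \ref{thm:!existence-of-diffops-graded}), an order-$3$ operator is determined by its restriction to $S^{\leq 3} h_\text{gr}^\vee$, so it suffices to show that $(\nabla^\pm)^2$ vanishes there.

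The $\nabla^-$ case follows by degree reasons: $\psi^-$ is built from a single evaluation pairing, so $\nabla^-$ lowers polynomial degree by $2$; hence $(\nabla^-)^2$ lowers it by $4$, and on each factor of $S^{\leq 3} h_\text{gr}^\vee$ the result lands in negative polynomial degree and therefore vanishes. Combined with $(\nabla^-)^2 \in \underline{\mathrm{D}}_3(Sh_\text{gr}^\vee)$, this yields $(\nabla^-)^2 = 0$.

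For $\nabla^+$ the vanishing on $S^{\leq 1}$ is again immediate, while vanishing on $S^2$ and $S^3$ requires explicit string-diagram calculations, which I would reduce to the components $h_a^\vee \otimes h_b^\vee$ and $h_a^\vee \otimes h_b^\vee \otimes h_c^\vee$ (for all arrows $a,b,c$) using that $\otimes$ preserves colimits. On $S^2$ one unwinds $(\nabla^+)^2$ exactly as in the $S^2$ step of Proposition \ref{prop:BV-ops-on-Shtildevee}, the contributions cancelling after sliding the inserted coevaluation strands. On $S^3$ the computation parallels the Jacobi-type cancellation in Proposition \ref{prop:quartic-poisson-bracket} and the $S^3$ step of Proposition \ref{prop:BV-ops-on-Shtildevee}: one expands $(\nabla^+)^2$ into three groups of terms indexed by which pair of the three factors interacts first, simplifies using that we implicitly map into $S h_\text{gr}^\vee$ (so the top strands may be freely permuted), and checks that the three groups cancel in pairs.

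The main obstacle — and the essential difference from the earlier $\widetilde{\Delta}^\pm$ and $\{-,-\}^\pm$ computations — is that the relevant pairing here, $\langle-,-\rangle_\text{gr}$, is \emph{symmetric} rather than antisymmetric, so the $S^3$ cancellations can no longer be attributed to antisymmetry of the pairing. Instead I expect the cancelling signs to come entirely from the $\mathbb{Z}_2$-grading: the parity factors $\pi^{|a|}$ appearing in $h_\text{gr}^\vee$, together with the relation $\sigma_{\pi,\pi} = -\mathrm{id}_{\pi\otimes\pi}$ and the odd degree of $\nabla^+$, supply the requisite signs when the $\pi$-strands are braided past one another (precisely the role played by the odd chords in Proposition \ref{prop:BV-ops-on-Shtildevee}). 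The bulk of the work is therefore a careful bookkeeping of these super-signs in the $S^3$ diagrams, confirming that the three groups of terms cancel just as antisymmetry of $\langle-,-\rangle$ forced cancellation in the ungraded case.
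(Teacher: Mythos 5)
Your proposal is correct and follows essentially the same route as the paper: reduce $(\nabla^\pm)^2$ to an order-$3$ differential operator checked on $S^{\leq 3}h_\text{gr}^\vee$ via Proposition \ref{prop:graded-comm-of-diffops}, dispose of $\nabla^-$ by polynomial-degree reasons, and verify $\nabla^+$ on $S^2$ and $S^3$ by diagram calculations (the paper additionally streamlines these by restricting to components where the arrows are in involution, e.g.\ $(\pi \otimes h_{\overline{a}}^\vee)\otimes h_a^\vee$ for $a \in Q_1$, since all other components vanish outright). Your observation that the cancellations must now come from the parity strands and $\sigma_{\pi,\pi}=-\mathrm{id}$, rather than from antisymmetry of the pairing, correctly identifies the sign mechanism at work in the paper's computation.
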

\begin{proof}
    We need to show that the 3rd order differential operators $(\nabla^\pm)^2$ vanish. Arguing as in the proof of Proposition \ref{prop:BV-ops-on-Shtildevee}, the only work is to show that $(\nabla^+)^2$ vanishes on $S^2h_\text{gr}^\vee$ and $S^3 h_\text{gr}^\vee$. For $S^2h_\text{gr}^\vee$, consider $(\nabla^+)^2$ on $(\pi^{|a|} \otimes h_a^\vee) \otimes (\pi^{|b|}\otimes h_b^\vee)$: when $\langle a,b\rangle_\text{gr} = 0$ it immediately vanishes, so it remains to show that $(\nabla^+)^2$ vanishes on $(\pi \otimes h_{\overline{a}}^\vee) \otimes h_a^\vee$ for all $a \in Q_1$.\footnote{We reduce to this case here, unlike in the proof of Proposition \ref{prop:BV-ops-on-Shtildevee}, so that the diagrams are not too convoluted.} Calculating, the map $(\pi \otimes h_{\overline{a}}^\vee) \otimes h_a^\vee \to Sh_\text{gr}^\vee \stackrel{\nabla^+}{\to} Sh_\text{gr}^\vee$ is
    \begin{equation*}
        \vcenter{\hbox{\includegraphics[scale=1]{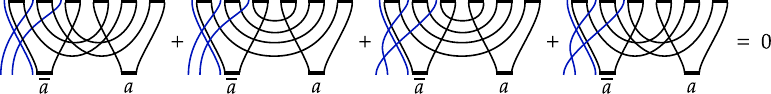}}}.
    \end{equation*}
    Therefore $(\nabla^+)^2$ vanishes on $S^2h_\text{gr}^\vee$. For $S^3 h_\text{gr}^\vee$ one follows the calculation in the proof of Proposition \ref{prop:BV-ops-on-Shtildevee}, modified appropriately --- as such, we will not spell it out. We point out that one can reduce the problem to showing that $(\nabla^+)^2$ vanishes on $(\pi \otimes h_{\overline{a}}^\vee) \otimes h_a^\vee \otimes h_a^\vee$ and $(\pi \otimes h_{\overline{a}}^\vee) \otimes (\pi \otimes h_{\overline{a}}^\vee) \otimes h_a^\vee$, for all $a \in Q_1$, which makes the diagrammatics more manageable.
\end{proof}

Thus, $\underline{\mathcal{C}}(1,Sh_\text{gr}^\vee)$ is an $R$-BV algebra. By way of $\mathrm{tr}_\text{gr}$ we can witness the BV operators $\nabla^\pm$ from the point of view of $\mathcal{N}_\text{gr}$. We first consider $\nabla^-$. The quantity $\{\mathrm{tr}_\text{gr}(a_1\cdots a_m),\mathrm{tr}_\text{gr}(b_1 \cdots b_n)\}_{\nabla^-}$ is computed to be
\begin{align*}
    \includegraphics[scale=1]{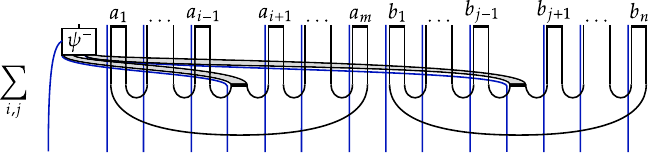}\\
    \vcenter{\hbox{\includegraphics[scale=1]{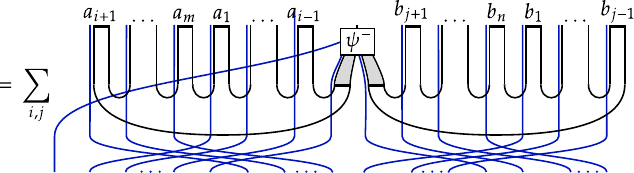}}}\;.
\end{align*}
Using $\sigma_{\pi,\pi} = -\mathrm{id}$ and the definition of $\psi^-$, this is nothing but
\begin{equation*}
    \vcenter{\hbox{\includegraphics[scale=1]{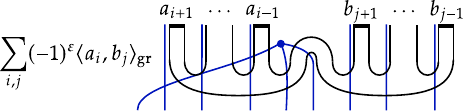}}}
    = \;
    \vcenter{\hbox{\includegraphics[scale=1]{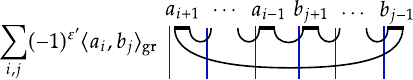}}}\;,
\end{equation*}
where here $\varepsilon = |a_1 \cdots a_i||a_{i+1}\cdots a_m| + |b_1 \cdots b_{j-1}||b_j\cdots b_n|$ and $\varepsilon' = \varepsilon + |a_{i+1} \cdots a_{i-1}|$. Thus, if we define an odd map $\mathrm{br}^-_\text{gr}:\mathcal{N}_\text{gr} \otimes \mathcal{N}_\text{gr} \to \mathcal{N}_\text{gr}$ by
\begin{equation*}
    \mathrm{br}^-_\text{gr}(a_1\cdots a_m \otimes b_1 \cdots b_n) := \sum_{i,j} \pm \langle a_i,b_j \rangle_{\text{gr}} \, e_{t(a_i)}a_{i+1} \cdots a_{i-1} b_{j+1} \cdots b_{j-1},
\end{equation*}
where the $\pm$ sign has parity $|a_1\cdots a_{i-1}||a_i \cdots a_m| + |b_1 \cdots b_{j-1}||b_j\cdots b_n| + |a_{i+1} \cdots a_{i-1}||b_j|$\footnote{This agrees with $\varepsilon'$ when $\langle a_i,b_j \rangle_{\text{gr}} \neq 0$, rewritten using $|a_i| + |b_j| = 1$.} --- which is well-defined and symmetric (as can be easily checked) --- then
\begin{equation}\label{eqn:014}
    \{\mathrm{tr}_\text{gr}(x),\mathrm{tr}_\text{gr}(y)\}_{\nabla^-} = \mathrm{tr}_\text{gr}
    (\mathrm{br}^-_{\text{gr}}(x \cdot y)) \quad \text{for all $x,y \in \mathcal{N}_{\text{gr}}$}.
\end{equation}

Similarly, we can calculate $\nabla^-(\mathrm{tr}_\text{gr}(a_1\cdots a_n))$, which comes out to be
\begin{equation*}
    \vcenter{\hbox{\includegraphics[scale=1]{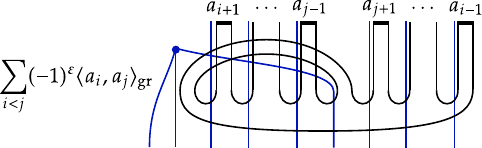}}}
    =
    \vcenter{\hbox{\includegraphics[scale=1]{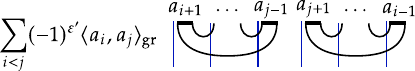}}}\;,
\end{equation*}
where here $\varepsilon = |a_1 \cdots a_{i-1}||a_i \cdots a_n|$ and $\varepsilon' = \varepsilon + |a_{i+1} \cdots a_{j-1}||a_j|$. Thus, defining an odd map $\delta_{\text{gr}}^-:\mathcal{N}_{\text{gr}} \to \mathcal{N}_{\text{gr}} \otimes \mathcal{N}_{\text{gr}}$ by
\begin{equation*}
    \delta^-_\text{gr}(a_1 \cdots a_n) := \frac{1}{2}\sum_{i,j}\pm \langle a_i, a_j \rangle_\text{gr} \, e_{t(a_i)}a_{i+1} \cdots a_{j-1} \otimes e_{t(a_j)}a_{j+1} \cdots a_{i-1},
\end{equation*}
where parity of the $\pm$ sign is $|a_1 \cdots a_{i-1}||a_i \cdots a_n| + |a_{i+1} \cdots a_{j-1}||a_j|$ --- which is again well-defined and symmetric --- yields
\begin{equation}\label{eqn:015}
    \nabla^-(\mathrm{tr}_\text{gr}(x)) = \mathrm{tr}_\text{gr}(\delta^-_\text{gr}(x)) \quad \text{for all $x \in \mathcal{N}_\text{gr}$}.
\end{equation}

As for $\nabla^+$, proceeding the same way leads us to consider symmetric odd maps $\mathrm{br}^+_\text{gr}:\mathcal{N}_\text{gr} \otimes \mathcal{N}_\text{gr} \to \mathcal{N}_\text{gr}$ and $\delta^+_{\text{gr}}:\mathcal{N}_\text{gr} \to \mathcal{N}_\text{gr} \otimes \mathcal{N}_\text{gr}$ defined by
\begin{align*}
    \mathrm{br}^+_\text{gr}(a_1\cdots a_m \otimes b_1 \cdots b_n) := \sum_{i,j} (-1)^\varepsilon \langle a_i,b_j \rangle_{\text{gr}} \, a_ia_{i+1} \cdots a_{i-1} a_ib_j b_{j+1} \cdots b_{j-1}b_j, \; \text{with}\\
    \varepsilon = |a_1\cdots a_{i-1}||a_i \cdots a_m| + |b_1\cdots b_{j-1}||b_j \cdots b_n| + |a_{i+1}\cdots a_{i-1}| + |b_{j+1}\cdots b_{j-1}||b_j|,
\end{align*}
and
\begin{align*}
    \delta^+_\text{gr}(a_1 \cdots a_n) := \frac{1}{2}\sum_{i,j}(-1)^\varepsilon \langle a_i, a_j \rangle_\text{gr} \, a_ia_{i+1} \cdots a_{j-1} a_j \otimes a_ja_{j+1} \cdots a_{i-1}a_i, \; \text{with}\\
    \varepsilon = |a_1\cdots a_{i-1}||a_i\cdots a_n| + |a_{i+1} \cdots a_{j-1}| + |a_{j+1}\cdots a_{i-1}||a_i| + |a_j|.
\end{align*}
As usual, these maps are defined to vanish on constant paths. From their derivation, it is immediate that for all $x,y \in \mathcal{N}_\text{gr}$ one has
\begin{equation}\label{eqn:016}
    \{\mathrm{tr}_\text{gr}(x),\mathrm{tr}_\text{gr}(y)\}_{\nabla^+} = \mathrm{tr}_\text{gr}
    (\mathrm{br}^+_{\text{gr}}(x \cdot y)) \quad \text{and} \quad
    \nabla^+(\mathrm{tr}_\text{gr}(x)) = \mathrm{tr}_\text{gr}(\delta^+_\text{gr}(x)).
\end{equation}

Extending $\mathrm{br}^\pm_\text{gr}$ and $\delta^\pm_\text{gr}$ to odd differential operators on $S\mathcal{N}_\text{gr}$ in the usual way, consider the odd 2nd order differential operators $\Delta_{\text{gr}}^\pm := \mathrm{br}^\pm_\text{gr} + \delta^\pm_{\text{gr}}$. Equations \eqref{eqn:014}, \eqref{eqn:015} and \eqref{eqn:016} imply the following:
\begin{prop}\label{prop:intertwining-of-BV-ops-graded}
    The map $\mathrm{tr}_\text{gr}:S\mathcal{N}_\text{gr} \to \underline{\mathcal{C}}(1,Sh_\text{gr}^\vee)$ intertwines $\Delta_\text{gr}^\pm$ with the BV operator $\nabla^\pm$.
\end{prop}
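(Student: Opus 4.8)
The plan is to recognise that the Proposition simply says that $\mathrm{tr}_\text{gr}$ is a morphism of BV algebras, and to prove this by reducing to the generating subspace $\mathcal{N}_\text{gr} \subset S\mathcal{N}_\text{gr}$. Having applied $\underline{\mathcal{C}}(1,-)$, everything takes place among genuine $R$-BV algebras in $\mathrm{smod}_R$ (using the Example after Proposition \ref{prop:pi-functor-on-BV-algs}), so all signs are ordinary Koszul signs and both $\Delta_\text{gr}^\pm$ and $\nabla^\pm$ are \emph{bona fide} odd order-$2$ differential operators to which the second-order Leibniz rule applies. First I would unpack $\Delta_\text{gr}^\pm = \mathrm{br}_\text{gr}^\pm + \delta_\text{gr}^\pm$ in low polynomial degree: since $\mathrm{br}_\text{gr}^\pm$ is extended as an order-$2$ operator vanishing on $S^{\leq 1}\mathcal{N}_\text{gr}$ and $\delta_\text{gr}^\pm$ as a derivation, we have $\Delta_\text{gr}^\pm(x) = \delta_\text{gr}^\pm(x)$ for $x \in \mathcal{N}_\text{gr}$, while the associated bracket satisfies $\{x,y\}_{\Delta_\text{gr}^\pm} = \mathrm{br}_\text{gr}^\pm(x \cdot y)$ for $x,y \in \mathcal{N}_\text{gr}$ (the $\delta_\text{gr}^\pm$-contributions to $\Delta_\text{gr}^\pm(xy)$ cancel against $\Delta_\text{gr}^\pm(x)\,y$ and $x\,\Delta_\text{gr}^\pm(y)$ by the derivation property). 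With this dictionary, Equations \eqref{eqn:015} and \eqref{eqn:016} read $\nabla^\pm(\mathrm{tr}_\text{gr}(x)) = \mathrm{tr}_\text{gr}(\Delta_\text{gr}^\pm(x))$ for all $x \in \mathcal{N}_\text{gr}$, and Equations \eqref{eqn:014} and \eqref{eqn:016} read $\{\mathrm{tr}_\text{gr}(x),\mathrm{tr}_\text{gr}(y)\}_{\nabla^\pm} = \mathrm{tr}_\text{gr}(\{x,y\}_{\Delta_\text{gr}^\pm})$ for all $x,y \in \mathcal{N}_\text{gr}$; these are the two facts on generators that drive the whole argument.

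Next I would promote the bracket identity from generators to all of $S\mathcal{N}_\text{gr}$. Both $\mathrm{tr}_\text{gr} \circ \{-,-\}_{\Delta_\text{gr}^\pm}$ and $\{-,-\}_{\nabla^\pm} \circ (\mathrm{tr}_\text{gr} \otimes \mathrm{tr}_\text{gr})$ are biderivations $S\mathcal{N}_\text{gr} \otimes S\mathcal{N}_\text{gr} \to \underline{\mathcal{C}}(1,Sh_\text{gr}^\vee)$, valued in the $S\mathcal{N}_\text{gr}$-module structure induced by the algebra map $\mathrm{tr}_\text{gr}$, since the associated BV brackets are biderivations and $\mathrm{tr}_\text{gr}$ respects products. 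As they agree on $\mathcal{N}_\text{gr} \otimes \mathcal{N}_\text{gr}$ by the previous step, expanding both via the biderivation (Leibniz) property down to generators forces $\mathrm{tr}_\text{gr}(\{u,v\}_{\Delta_\text{gr}^\pm}) = \{\mathrm{tr}_\text{gr}(u),\mathrm{tr}_\text{gr}(v)\}_{\nabla^\pm}$ for all $u,v \in S\mathcal{N}_\text{gr}$ (this is the bi-analogue of Theorem \ref{thm:!existence-of-diffops-graded} recorded in \S\ref{sec:bi-differential operators}).

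Finally I would run an induction on polynomial degree to conclude $\nabla^\pm \circ \mathrm{tr}_\text{gr} = \mathrm{tr}_\text{gr} \circ \Delta_\text{gr}^\pm$. Degree $0$ is $\nabla^\pm(1) = 0 = \mathrm{tr}_\text{gr}(\Delta_\text{gr}^\pm(1))$, and degree $1$ is the generator identity above. For the inductive step, write a degree-$n$ element as $x \cdot u$ with $x \in \mathcal{N}_\text{gr}$ and $u$ of lower degree, apply the second-order Leibniz rule to expand $\Delta_\text{gr}^\pm(x \cdot u)$ into $\Delta_\text{gr}^\pm(x)\,u$, $x\,\Delta_\text{gr}^\pm(u)$ and $\{x,u\}_{\Delta_\text{gr}^\pm}$, push $\mathrm{tr}_\text{gr}$ through (it is an algebra map), substitute the degree-$1$ identity, the inductive hypothesis and the global bracket identity of the previous paragraph, and then recompress using the second-order Leibniz rule for $\nabla^\pm$ in reverse to recognise the assembled terms as $\nabla^\pm(\mathrm{tr}_\text{gr}(x \cdot u))$. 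The only genuine obstacle is bookkeeping the Koszul signs in the two Leibniz expansions; these match because $\mathrm{tr}_\text{gr}$ is even and both $\Delta_\text{gr}^\pm$, $\nabla^\pm$ are odd of order $2$, so the conceptual content is carried entirely by Equations \eqref{eqn:014}--\eqref{eqn:016}, which were already established by the explicit string-diagram computations preceding the statement.
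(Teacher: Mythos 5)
Your proposal is correct, and it is essentially the paper's own argument: the paper establishes the generator-level identities \eqref{eqn:014}, \eqref{eqn:015} and \eqref{eqn:016} by the preceding string-diagram computations and then simply asserts that they imply the proposition, leaving implicit exactly the deduction you spell out (the dictionary $\Delta_\text{gr}^\pm|_{\mathcal{N}_\text{gr}} = \delta_\text{gr}^\pm$, $\{x,y\}_{\Delta_\text{gr}^\pm} = \mathrm{br}_\text{gr}^\pm(x\cdot y)$ on generators, followed by the Leibniz-rule induction on polynomial degree). One cosmetic caveat: at this point $(\Delta_\text{gr}^\pm)^2=0$ is not yet known (it is the content of Theorem \ref{thm:graded-necklace-BV-alg}, which relies on this proposition), so you should say ``odd order-$2$ differential operators'' rather than ``BV algebras'' on the source side --- your argument only uses the order-$2$ Leibniz structure, so nothing is circular, but the phrasing should reflect that.
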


\begin{thm}\label{thm:graded-necklace-BV-alg}
    $(S\mathcal{N}_\text{gr},\Delta^\pm_\text{gr})$ is a BV algebra.
\end{thm}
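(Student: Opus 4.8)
The plan is to verify the two BV-algebra axioms for $\Delta^\pm_\text{gr}$. That $\Delta^\pm_\text{gr}$ is an odd $2$nd order differential operator killing the unit holds by construction: each $\Delta^\pm_\text{gr} = \mathrm{br}^\pm_\text{gr} + \delta^\pm_\text{gr}$ is the extension, via Theorem \ref{thm:!existence-of-diffops-graded}, of a map defined on $S^{\leq 2}\mathcal{N}_\text{gr}$ that vanishes on $S^{\leq 1}\mathcal{N}_\text{gr}$. Hence the whole content is the identity $(\Delta^\pm_\text{gr})^2 = 0$ (note that $(\Delta^\pm_\text{gr})^2 = \tfrac12[\Delta^\pm_\text{gr},\Delta^\pm_\text{gr}]$ is an even $3$rd order operator by Proposition \ref{prop:graded-comm-of-diffops}). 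The decisive observation is that $\Delta^\pm_\text{gr}$ is defined purely intrinsically on $S\mathcal{N}_\text{gr}$, independently of any category $\mathcal{C}$. It therefore suffices to exhibit a \emph{single} $\otimes$-cocomplete symmetric monoidal $\Pi$-category $\mathcal{C}$, together with dualisable objects $c_i$, for which $\mathrm{tr}_\text{gr}:S\mathcal{N}_\text{gr} \to \underline{\mathcal{C}}(1,Sh_\text{gr}^\vee)$ is \emph{injective}; the same category will handle both signs at once.

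Granting such a choice, the conclusion is immediate. Applying the intertwining relation of Proposition \ref{prop:intertwining-of-BV-ops-graded} twice yields
\begin{equation*}
    \mathrm{tr}_\text{gr} \circ (\Delta^\pm_\text{gr})^2 = \nabla^\pm \circ \mathrm{tr}_\text{gr} \circ \Delta^\pm_\text{gr} = (\nabla^\pm)^2 \circ \mathrm{tr}_\text{gr} = 0,
\end{equation*}
the last equality using that $\nabla^\pm$ is already known to be a BV operator (proved in the preceding proposition), so $(\nabla^\pm)^2 = 0$. Since $\mathrm{tr}_\text{gr}$ is an even, hence degree-preserving, linear map, injectivity forces $(\Delta^\pm_\text{gr})^2(w) = 0$ for every $w \in S\mathcal{N}_\text{gr}$, i.e.\ $(\Delta^\pm_\text{gr})^2 = 0$. (As a by-product this gives, through Proposition \ref{prop:oddIBL-BV}, that $(\mathcal{N}_\text{gr},\mathrm{br}^\pm_\text{gr},\delta^\pm_\text{gr})$ is an odd IBL algebra.)

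It thus remains to build the injective model, and this is where the real work lies. I would mimic the construction in Proposition \ref{prop:injectivity-of-tr}: linearise the category $\mathrm{Diag}_{\overline{Q}}$ of $\overline{Q}_0$-coloured $1$-dimensional string diagrams to get an $R$-linear symmetric monoidal category $\mathcal{A}$, pass to the cocompletion $\widehat{\mathcal{A}} = [\mathcal{A}^{\op},\mathrm{mod}_R]$, and then apply the functor $\mathcal{S}$ of Construction \ref{constr:addmon-to-pimon} to obtain a $\otimes$-cocomplete $\Pi$-category $\mathcal{C} = \mathcal{S}\widehat{\mathcal{A}}$ in which string diagrams \emph{are} the morphisms. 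Taking $c_i$ to be the (even) self-dual generator $\yo(i)$, the decorations in $h_\text{gr}^\vee = \bigoplus_a(\pi^{|a|}\otimes h_a^\vee)$ simply record parities, and a Yoneda-lemma computation parallel to that of Proposition \ref{prop:injectivity-of-tr} should identify $\underline{\mathcal{C}}(1,Sh_\text{gr}^\vee)$ with the free super $R$-module on $\overline{Q}_0$-coloured closed string diagrams, $\mathbb{Z}_2$-graded by the number of $\pi$-decorated strands.

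The main obstacle is the injectivity bookkeeping in the graded setting. Relative to Proposition \ref{prop:injectivity-of-tr} two new complications appear: the Koszul signs now entering the $S_n$-coequalisers defining the free commutative monoid, and the fact that certain closed paths vanish in $\mathcal{N}_\text{gr}$ under the graded-commutator relations. One must check that $\mathrm{tr}_\text{gr}$ is well defined and carries the chosen $R$-basis of $S\mathcal{N}_\text{gr}$ — products of nonzero cyclic paths whose underlying cyclic paths are pairwise distinct — injectively onto a subset of the diagram basis. The combinatorial heart is that a coloured diagram remembers its coloured loops and which like-coloured endpoints are joined, so the constituent cyclic paths and their parities can be reconstructed up to the permutation action; once this purely set-theoretic claim is secured the signs are immaterial to injectivity, and the proof concludes as above. (For the $-$ case one may alternatively argue directly by polynomial degree, exactly as for $\nabla^-$ in the preceding proposition, but the injectivity route treats both signs uniformly.)
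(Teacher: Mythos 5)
Your proposal is correct and takes essentially the same route as the paper: the paper likewise reduces $(\Delta^\pm_\text{gr})^2=0$ to exhibiting one choice of $\mathcal{C}$ and dualisable $c_i$ with $\mathrm{tr}_\text{gr}$ injective (via Proposition \ref{prop:intertwining-of-BV-ops-graded} and $(\nabla^\pm)^2=0$), takes $\mathcal{C}=\mathcal{S}\widehat{\mathcal{A}}$ with $c_i=(\yo(i),0)$, identifies $\underline{\mathcal{C}}(1,Sh_\text{gr}^\vee)$ with the free module on coloured string diagrams modulo sign-adjusted permutation relations, and verifies injectivity on the basis of products of closed paths that are nonzero in $\mathcal{N}_\text{gr}$, exactly as you outline. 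One caveat on your final parenthetical: the polynomial-degree shortcut does \emph{not} apply to $\Delta^-_\text{gr}$, since $\mathrm{br}^-_\text{gr}$ lowers and $\delta^-_\text{gr}$ raises polynomial degree on $S\mathcal{N}_\text{gr}$, so $\Delta^-_\text{gr}$ is not homogeneous of degree $-2$ the way $\nabla^-$ is (indeed the degree $\pm 2$ parts of $(\Delta^-_\text{gr})^2$ encode the Jacobi and coJacobi identities, which are not formal); this does not damage your proof, since your main argument handles both signs through injectivity.
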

\begin{proof}
    We need to show that $(\Delta^\pm_\text{gr})^2 = 0$. By Proposition \ref{prop:intertwining-of-BV-ops-graded}, it suffices to show that there exists a choice of $\mathcal{C}$ and dualisable objects $c_i$ for which $\mathrm{tr}_\text{gr}$ is injective.

    Here we maintain the notation from the proof of Proposition \ref{prop:injectivity-of-tr}. Applying Construction \ref{constr:addmon-to-pimon} to $\widehat{\mathcal{A}}$ gives a cocomplete closed symmetric monoidal $\Pi$-category $\mathcal{S}\widehat{\mathcal{A}}$. Take $\mathcal{C} = \mathcal{S}\widehat{\mathcal{A}}$ and let $c_i = (\yo(i),0) \in \mathcal{C}$ (which is symmetrically self-dual) for each vertex $i$, so that $h_a^\vee \cong (\yo(a),0)$. Notice that the $S_n$-action on
    \begin{equation*}
        (h_\text{gr}^\vee)^{\otimes n} \cong \bigoplus_{a_1,\dots,a_n}\left((\pi^{|a_1|} \otimes h_{a_1}^\vee) \otimes \cdots \otimes (\pi^{|a_n|} \otimes h_{a_n}^\vee) \right)
        \cong
        \bigoplus_{a_1,\dots,a_n}\left(\pi^{|a_1\cdots a_n|} \otimes (\yo(a_1,\dots,a_n),0)\right)
    \end{equation*}
    is described as follows: a permutation $\tau \in S_n$ acts by the maps
    % https://q.uiver.app/#q=WzAsMixbMCwwLCJcXHBpXnt8YV8xXFxjZG90cyBhX258fSBcXG90aW1lcyAoXFx5byhhXzEsXFxkb3RzLGFfbiksMCkiXSxbMywwLCJcXHBpXnt8YV8xXFxjZG90cyBhX258fSBcXG90aW1lcyAoXFx5byhhX3tcXHNpZ21hKDEpfSxcXGRvdHMsYV97XFxzaWdtYSgxKX0pLDApIl0sWzAsMSwiXFxtYXRocm17aWR9IFxcb3RpbWVzKFxceW8oXFxwbVxcc2lnbWEgXFxjZG90KSwwKSJdXQ==
    \begin{equation}\label{eqn:S_n-action}
    \mathrm{id} \otimes(\yo(\pm\tau \cdot),0):
        \pi^{|a_1\cdots a_n|} \otimes (\yo(a_1,\dots,a_n),0) \to 
        \pi^{|a_1\cdots a_n|} \otimes (\yo(a_{\tau(1)},\dots,a_{\tau(n)}),0),
    \end{equation}
    where $\pm$ is the appropriate Koszul sign and `$\tau \cdot$' is the action via the symmetry of $\mathrm{Diag_{\overline{Q}}}$. Using similar arguments to those in the proof of Proposition \ref{prop:injectivity-of-tr}, one finds that
    \begin{equation*}
        \underline{\mathcal{C}}(1,Sh_\text{gr}^\vee) 
        \cong \bigsqcup_n \left(\bigoplus_{a_1,\dots,a_n}\mathcal{A}(\emptyset, (a_1,\dots,a_n))\right)_{S_n},
    \end{equation*}
    where $\mathcal{A}(\emptyset, (a_1,\dots,a_n))$ lies in the degree $|a_1\cdots a_n|$ part of the supermodule, and where colimits are computed in $\mathrm{smod}_R$. By \eqref{eqn:S_n-action}, the $S_n$-action is seen to be the one induced by post-composition using the symmetry of $\mathrm{Diag_{\overline{Q}}}$, adjusted by the appropriate sign, for example,
    \begin{equation}\label{eqn:017}
        \vcenter{\hbox{\includegraphics[scale=1]{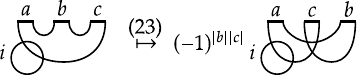}}}\;.
    \end{equation}
    Rewriting this slightly differently, we have
    \begin{equation}\label{eqn:022}
        \underline{\mathcal{C}}(1,Sh_\text{gr}^\vee) \cong 
        \left\langle \bigsqcup_n\bigsqcup_{a_1,\dots,a_n}\mathrm{Diag}_{\overline{Q}}(\emptyset,(a_1,\dots,a_n))\right\rangle_R \Big/ \sim,
    \end{equation}
    the free module on the set of coloured string diagrams from $\emptyset$ to finite lists of arrows $(a_1,\dots,a_n)$, equipped with the aformentioned grading, modulo relations such as \eqref{eqn:017}.

    Under \eqref{eqn:022} the map $\mathrm{tr}_\text{gr}$ sends $(e_{i_1})\cdots (e_{i_k}) \cdot (a_{1}^{1}\cdots a_{n_1}^{1}) \cdots (a_{1}^{p}\cdots a_{n_p}^{p}) \in S\mathcal{N}_\text{gr}$ to (the equivalence class of)
    \begin{equation*}
        \vcenter{\hbox{\includegraphics[scale=1]{Diagrams/093.pdf}}}\;.
    \end{equation*}
    One can already convince themselves that $\mathrm{tr}_\text{gr}$ is injective, but let us make the argument precise. Recall that $\mathcal{N}_\text{gr} \cong \langle \mathscr{B} \rangle_R$ is free as a module, where $\mathscr{B}$ is a certain set of closed paths. It follows that $S\mathcal{N}_\text{gr} \cong \langle \mathscr{B}' \rangle_R$ is free as a module too, where $\mathscr{B}'$ is any maximal collection of finite tuples of elements of $\mathscr{B}$ that are nonzero in $S\mathcal{N}_\text{gr}$ and whose underlying unordered tuples\footnote{By an `unordered tuple' we simply mean a multiset.} are pairwise distinct. We then observe that the map $\mathscr{B}' \to \underline{\mathcal{C}}(1,Sh_\text{gr}^\vee)$  inducing $\mathrm{tr}_\text{gr}$ --- which is injective by a similar argument to that in the proof of Proposition \ref{prop:injectivity-of-tr} --- has linearly independent image, and so it follows that $\mathrm{tr}_\text{gr}$ is itself injective.
\end{proof}

Applying Proposition \ref{prop:oddIBL-BV} then yields the following, which would be very tedious to verify directly:
\begin{cor}\label{cor:-grneck-is-odd-ibl}
    The triple $(\mathcal{N}_\text{gr},\mathrm{br}^\pm_\text{gr},\delta^\pm_\text{gr})$ is an odd IBL algebra. 
\end{cor}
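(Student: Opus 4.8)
The plan is to deduce the corollary as a direct application of the forward implication of Proposition \ref{prop:oddIBL-BV}, thereby sidestepping any hands-on verification of the Jacobi, coJacobi, involutivity, and cocycle identities of Definition \ref{defn:odd-IBL} — the route the text itself flags as "very tedious". Concretely, I would instantiate that proposition with the $\otimes$-cocomplete symmetric monoidal $\Pi$-category $\mathcal{C} = \mathrm{smod}_R$ and the object $A = \mathcal{N}_\text{gr}$, taking $\mathrm{br} = \mathrm{br}^\pm_\text{gr}$ and $\delta = \delta^\pm_\text{gr}$, and treating the $-$ and $+$ cases uniformly.

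First I would check the structural hypotheses. The category $\mathrm{smod}_R$ is $\otimes$-cocomplete and $\mathcal{N}_\text{gr}$ is an object of it. By construction, $\mathrm{br}^\pm_\text{gr}$ and $\delta^\pm_\text{gr}$ are odd maps that are symmetric — both facts were recorded at the point where these maps were defined. Thus the data $(\mathcal{N}_\text{gr}, \mathrm{br}^\pm_\text{gr}, \delta^\pm_\text{gr})$ is of exactly the type to which Proposition \ref{prop:oddIBL-BV} applies.

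Next comes the key hypothesis: that the odd 2nd order differential operator $\mathrm{br}^\pm_\text{gr} + \delta^\pm_\text{gr}$ on $S\mathcal{N}_\text{gr}$ squares to zero. But this operator is by definition $\Delta^\pm_\text{gr}$, and Theorem \ref{thm:graded-necklace-BV-alg} asserts precisely that $(S\mathcal{N}_\text{gr}, \Delta^\pm_\text{gr})$ is a BV algebra, i.e.\ that $(\Delta^\pm_\text{gr})^2 = 0$. Feeding this into the forward implication of Proposition \ref{prop:oddIBL-BV} yields that $(\mathcal{N}_\text{gr}, \mathrm{br}^\pm_\text{gr}, \delta^\pm_\text{gr})$ is an odd IBL algebra.

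I do not expect a genuine obstacle here: all the real content has already been discharged in the proof of Theorem \ref{thm:graded-necklace-BV-alg}, which established $(\Delta^\pm_\text{gr})^2 = 0$ \emph{indirectly}, by exhibiting a category and self-dual objects for which $\mathrm{tr}_\text{gr}$ is injective and intertwines $\Delta^\pm_\text{gr}$ with the manifestly square-zero operators $\nabla^\pm$. The one point worth flagging is that we invoke the forward direction of Proposition \ref{prop:oddIBL-BV}, which — unlike its converse — requires no hypothesis on $3 \in R$; it relies only on $2$ being invertible, a standing assumption from \S\ref{subsec:lie-bialg-structure} onward. Hence the corollary is immediate.
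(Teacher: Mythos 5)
Your proposal is correct and is essentially the paper's own argument: the text proves Corollary \ref{cor:-grneck-is-odd-ibl} precisely by feeding Theorem \ref{thm:graded-necklace-BV-alg} (that $(\Delta^\pm_\text{gr})^2=0$) into the forward implication of Proposition \ref{prop:oddIBL-BV}, which needs no hypothesis on $3$ and only that $2$ is invertible. Your spelling-out of the hypotheses (oddness and symmetry of $\mathrm{br}^\pm_\text{gr}$, $\delta^\pm_\text{gr}$, and the identification $\Delta^\pm_\text{gr}=\mathrm{br}^\pm_\text{gr}+\delta^\pm_\text{gr}$) matches what the paper leaves implicit.
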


\section{Injectivity of $\mathrm{otr}$}\label{sec:Injectivity-of-otr}
In this section we address the question of whether the map $\mathrm{otr}:S\Pi\mathcal{N} \to \underline{\mathcal{C}}(1,S\widetilde{h}^\vee)$ can be injective. While we suspect that the answer is \textit{yes}, the difficulty here is that if the $\Pi$-category $\mathcal{C}$ is obtained via Construction \ref{constr:addmon-to-pimon} as (possibly the cocompletion of) $\mathcal{SB}$ for some $\mathcal{B}$, then $\mathrm{otr}$ cannot be injective: the only map $\pi \to 1$ in $\mathcal{C}$ is zero, and hence $\mathrm{otr}(e_i) = 0$. For this reason, taking $\mathcal{C} = \mathcal{S}\widehat{\mathcal{A}}$ as in the proof of Theorem \ref{thm:graded-necklace-BV-alg} does not suffice. However, it almost does.

\begin{prop}\label{prop:near-injectivity-of-otr}
    Let $\mathcal{C} = \mathcal{\mathcal{S}\widehat{\mathcal{A}}}$. The object $(\yo(i),\yo(i)) \in \mathcal{C}$ is dualisable. For each $i$, let $c_i = (\yo(i),\yo(i))$ and let $\iota_i:\pi \otimes c_i \to c_i$ be the obvious isomorphism (which is an odd involution). Then the BV algebra map
    \begin{equation*}
        \mathrm{otr}:S\Pi\mathcal{N} \to \underline{\mathcal{C}}(1,S\widetilde{h}^\vee)
    \end{equation*}
    has kernel $\langle \{e_i\}_{i\in \overline{Q}_0} \rangle$, the ideal generated by constant paths.
\end{prop}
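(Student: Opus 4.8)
The plan is to establish the two inclusions $\langle \{e_i\} \rangle \subseteq \ker(\mathrm{otr})$ and $\ker(\mathrm{otr}) \subseteq \langle \{e_i\} \rangle$ separately, after first recording that the proposed data is admissible. The key structural observation, which I would use throughout, is the factorisation $c_i = (\yo(i),\yo(i)) \cong c_i^{\mathrm{gr}} \otimes V$, where $c_i^{\mathrm{gr}} = (\yo(i),0)$ is the self-dual object from the proof of Theorem \ref{thm:graded-necklace-BV-alg} and $V := 1 \oplus \pi = (\yo(\emptyset),\yo(\emptyset))$. Since $c_i^{\mathrm{gr}}$ and $V$ are both (self-)dualisable in $\mathcal{S}\widehat{\mathcal{A}}$, so is $c_i$; moreover $\pi \otimes V \cong V$ via a canonical swap $\theta_V$, and $\iota_i = \mathrm{id}_{c_i^{\mathrm{gr}}} \otimes \theta_V$ is the asserted odd involution. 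These are routine verifications.

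The inclusion $\langle \{e_i\} \rangle \subseteq \ker(\mathrm{otr})$ is immediate: because each cyclic path is viewed as odd, $\mathrm{otr}(e_i)$ is an odd $1$-point landing in $S^0\widetilde{h}^\vee = 1$, hence an element of $\mathcal{C}(\pi,1) = \mathcal{S}\widehat{\mathcal{A}}(\pi,1) = \widehat{\mathcal{A}}(0,\yo(\emptyset)) \oplus \widehat{\mathcal{A}}(\yo(\emptyset),0) = 0$. Thus $\mathrm{otr}(e_i) = 0$, and as $\mathrm{otr}$ is an algebra map the entire ideal vanishes. For the reverse inclusion I would reduce to an injectivity statement: splitting $\Pi\mathcal{N} = \Pi\mathcal{N}^{\mathrm{nc}} \oplus (\text{span of the } e_i)$ into non-constant and constant cyclic paths yields $S\Pi\mathcal{N}/\langle \{e_i\} \rangle \cong S\Pi\mathcal{N}^{\mathrm{nc}}$, and since $\mathrm{otr}$ annihilates every monomial containing some $e_i$, the containment $\ker(\mathrm{otr}) \subseteq \langle \{e_i\} \rangle$ is equivalent to injectivity of the restriction $\mathrm{otr}|_{S\Pi\mathcal{N}^{\mathrm{nc}}}$.

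To prove this injectivity I would compute $S\widetilde{h}^\vee$ and its $1$-points explicitly, mirroring the arguments in Proposition \ref{prop:injectivity-of-tr} and Theorem \ref{thm:graded-necklace-BV-alg}. The crucial input is the shape of $\widetilde{h}_a^\vee$. Under the factorisation above, $h_a^\vee \cong \bigl(c_{s(a)}^{\mathrm{gr}} \otimes (c_{t(a)}^{\mathrm{gr}})^\vee\bigr) \otimes V^{\otimes 2}$, and the two defining maps of the coequaliser act by the source involution $\theta_1$ and the dual (target) involution $\theta_2$ on the two $V$-factors. Signs are essential here: while $\theta_1^2 = \mathrm{id}$, the relation $\sigma_{\pi,\pi} = -\mathrm{id}$ highlighted after \eqref{eqn:020} forces $\theta_2^2 = -\mathrm{id}$, and since $\theta_1,\theta_2$ anticommute the even operator $P := \theta_1^{-1}\theta_2 = \theta_1\theta_2$ satisfies $P^2 = -\theta_1^2\theta_2^2 = \mathrm{id}$. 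A short eigenspace computation then shows the cokernel of $\theta_1 - \theta_2 = \theta_1(\mathrm{id} - P)$ is the $(+1)$-eigenspace of $P$, which is $(1|1)$-dimensional, so that $\widetilde{h}_a^\vee \cong (\yo(s(a),t(a)),\yo(s(a),t(a)))$ is nonzero. Feeding this into Construction \ref{constr:free-com-mon} and evaluating the even and odd parts of $\underline{\mathcal{C}}(1,-)$ at $\emptyset$ presents $\underline{\mathcal{C}}(1,S\widetilde{h}^\vee)$ as a free supermodule on equivalence classes of coloured string diagrams carrying the extra chord data, on which $\mathrm{otr}$ sends the monomial basis of $S\Pi\mathcal{N}^{\mathrm{nc}}$ injectively, exactly as in the earlier proofs.

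The main obstacle is precisely this sign-sensitive cokernel computation identifying $\widetilde{h}_a^\vee$. It is the subtle relation $(\iota^\vee)^2 = -\mathrm{id}$ that keeps the coequaliser from collapsing: had $\theta_2$ squared to $+\mathrm{id}$ one would instead find $P^2 = -\mathrm{id}$, making $\mathrm{id} - P$ invertible and $\widetilde{h}_a^\vee$ zero. Getting this bookkeeping right, and then tracking the chord data faithfully through the $S_n$-coequalisers so as to confirm linear independence of the images $\mathrm{otr}(p_1 \cdots p_k)$ for distinct products of non-constant cyclic paths, is where the genuine work lies; the remaining combinatorics of matching diagram bases is then a direct adaptation of the proof of Proposition \ref{prop:injectivity-of-tr}.
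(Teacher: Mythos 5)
Your overall architecture coincides with the paper's: show $\mathrm{otr}(e_i)=0$ because $\mathcal{C}(\pi,1)=0$, identify the ideal $\langle\{e_i\}_{i\in\overline{Q}_0}\rangle$ with the span of basis monomials containing a constant path, and reduce the proposition to injectivity of $\mathrm{otr}$ on the span of monomials in nonconstant cyclic paths, attacked via an explicit diagrammatic description of $\underline{\mathcal{C}}(1,S\widetilde{h}^\vee)$. Where you genuinely depart from the paper is the identification of $\widetilde{h}_a^\vee$. The paper does this by brute force: it refines $\mathrm{Diag}_{\overline{Q}}$ to a flavoured category $\mathrm{Diag}_{\overline{Q}}'$, computes duality data, $\iota_i$, $\iota_i^\vee$ and the two coequalised maps of \eqref{eqn:019} component by component (with signs coming from the crossings in \eqref{eqn:019} and \eqref{eqn:020}), and then presents $\underline{\mathcal{C}}(1,S\widetilde{h}^\vee)$ by flavoured diagrams modulo the local relations \eqref{eqn:flavour-change} and the permutation relations \eqref{eqn:023}. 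Your factorisation $c_i\cong c_i^{\mathrm{gr}}\otimes V$ with $V=1\oplus\pi$, together with the sign bookkeeping $\theta_1^2=\mathrm{id}$, $\theta_2^2=-\mathrm{id}$ (the dual of an odd involution squares to $-\mathrm{id}$, exactly the phenomenon the paper records after \eqref{eqn:020}), anticommutativity, $P^2=\mathrm{id}$, and cokernel equal to the $(+1)$-eigenspace of $P$, is a cleaner, essentially coordinate-free route to the same structural fact: $\widetilde{h}_a^\vee$ is ``$(1|1)$-dimensional'' over $\yo(s(a),t(a))$, which is precisely what the paper's flavour-change relations encode (one even and one odd flavour class per arrow). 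This part of your plan is correct, and your observation that the opposite sign would make $\mathrm{id}-P$ invertible and kill $\widetilde{h}_a^\vee$ is exactly the right sanity check. (One pedantic remark: since $\theta_1$ is an \emph{odd} isomorphism, $\mathrm{coker}(\theta_1-\theta_2)$ is the parity shift of the $(+1)$-eigenspace, which changes nothing here.)

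The gap is in the last step. You claim that, once the diagrammatic presentation is in hand, $\mathrm{otr}$ sends the monomial basis injectively ``exactly as in the earlier proofs''. It does not: in Proposition \ref{prop:injectivity-of-tr} and Theorem \ref{thm:graded-necklace-BV-alg} the trace of a basis monomial is, up to sign, a \emph{single} diagram class, so injectivity reduces to matching diagrams. Here, because each $\widetilde{h}_a^\vee$ has both an even and an odd summand and the odd trace must distribute its single $\pi$ over the strands (one $\iota$-insertion per position), $\mathrm{otr}$ of a monomial is a \emph{sum} of many diagram classes of different kinds, and a priori such sums for distinct monomials could overlap or partially cancel. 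The paper resolves this with a device that has no counterpart in the earlier proofs: the weight grading by the number of odd flavoured arrows, which is invariant under all the relations, together with a leading-term argument --- the lowest-weight component of $\mathrm{otr}$ of a product of $p$ nonconstant cycles has weight exactly $p$, as in \eqref{eqn:024}, these leading terms are pairwise distinct basis elements with linearly independent images, and injectivity follows by filtering on weight. Your write-up names linear independence as ``where the genuine work lies'' but supplies no mechanism for it. The fix sits naturally inside your own framework: grade by the number of odd factors chosen from the $(1|1)$-decompositions of the $\widetilde{h}_a^\vee$, check that this weight descends to your diagram classes, and run the lowest-weight argument. With that supplied, your proof is complete, and in the $\widetilde{h}_a^\vee$ computation arguably tidier than the paper's.
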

\begin{proof}
    Let $\mathrm{Diag}_{\overline{Q}}'$ be the category whose objects are finite tuples of elements of $\overline{Q}_0 \times \mathbb{Z}_2$ --- thought of as objects of $\mathrm{Diag}_{\overline{Q}}$ where each point has an extra binary label which we call \textit{flavour} --- and whose maps are simply the maps in $\mathrm{Diag}_{\overline{Q}}$ between underlying unflavoured families. For example, the following is a map $(j_0) \to (i_1,j_0,i_0)$ in $\mathrm{Diag}_{\overline{Q}}'$:
    \begin{equation*}
        \vcenter{\hbox{\includegraphics[scale=1]{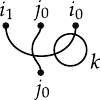}}}\;.
    \end{equation*}
    This category is symmetric monoidal in the natural way, with respect to which forgetting flavour yields a symmetric monoidal equivalence $\mathrm{Diag}_{\overline{Q}}' \simeq \mathrm{Diag}_{\overline{Q}}$. Hence we can view $\mathcal{A}$ as the free $R$-linear SMC on $\mathrm{Diag}_{\overline{Q}}'$, which will turn out to be helpful for keeping track of monoidal factors. Put $c_i = (\yo(i_0),\yo(i_1)) \in \mathcal{C}$. From now on we suppress the notation `$\yo$', leaving the Yoneda embedding $\mathcal{A} \hookrightarrow \widehat{\mathcal{A}}$ implicit.

    We need to show that $c_i = ((i_0),(i_1))$ is dualisable. Let $c_i^\vee = c_i$ and define $\mathrm{ev}:c_i^\vee \otimes c_i \to 1$ to be the map
    \begin{equation*}
        ((i_0,i_0)\oplus(i_1,i_1),(i_0,i_1)\oplus(i_1,i_0)) \to (\emptyset ,0)
    \end{equation*}
    whose nonzero components are
    \begin{equation*}
        \vcenter{\hbox{\includegraphics[scale=1]{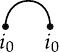}}}\quad 
        \text{and}\quad 
        \vcenter{\hbox{\includegraphics[scale=1]{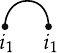}}}.
    \end{equation*}
    Define $\mathrm{coev}:1 \to c_i \otimes c_i^\vee$ similarly. A quick calculation shows that $(c_i,c_i^\vee,\mathrm{ev},\mathrm{coev})$ is duality data. So $c_i$ is dualisable, and in particular self-dual. Caution must be taken however because it is \textit{not symmetrically self-dual}: due to the sign in \eqref{eqn:018} the map $\mathrm{ev} \circ \sigma : c_i \otimes c_i^\vee \to 1$ has nonzero components
    \begin{equation*}
        \vcenter{\hbox{\includegraphics[scale=1]{Diagrams/110.pdf}}}\quad 
        \text{and}\quad 
        \vcenter{\hbox{\includegraphics[scale=1]{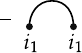}}}.
    \end{equation*}
    If $a$ is an arrow with $s(a) = i$ and $t(a) = j$, then $h_a^\vee = c_i \otimes c_j^\vee = ((i_0,j_0)\oplus(i_1,j_1),(i_0,j_1)\oplus(i_1,j_0))$. When the arrow $a$ is remembered, we denote the pair $(i_\mu,j_\nu)$ by `$a_{\mu\nu}$' --- which we refer to as a \textit{flavoured} arrow --- so that $h_a^\vee = (a_{00}\oplus a_{11} ,a_{01} \oplus a_{10})$.

    Now, the odd involution $\iota_i:\pi \otimes c_i \to c_i$ is the map $(i_1,i_0) \to (i_0,i_1)$ with components
    \begin{equation*}
        \vcenter{\hbox{\includegraphics[scale=1]{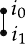}}}\quad 
        \text{and}\quad 
        \vcenter{\hbox{\includegraphics[scale=1]{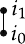}}}\;.
    \end{equation*}
    Its dual map $\iota_i^\vee:\pi \otimes c_i^\vee \to c_i^\vee$ is then the map $(i_1,i_0) \to (i_0,i_1)$ with components
    \begin{equation*}
        \vcenter{\hbox{\includegraphics[scale=1]{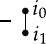}}}\quad 
        \text{and}\quad 
        \vcenter{\hbox{\includegraphics[scale=1]{Diagrams/113.pdf}}}\;,
    \end{equation*}
    where the sign comes from the crossing in \eqref{eqn:020}. Calculating further, the two maps $\pi \otimes h_a^\vee \to h_a^\vee$ shown in \eqref{eqn:019}, which are maps $(a_{01}\oplus a_{10},a_{00}\oplus a_{11}) \to (a_{00}\oplus a_{11},a_{01}\oplus a_{10})$, are found to have components
    \begin{equation}\label{eqn:021}
        \vcenter{\hbox{\includegraphics[scale=1]{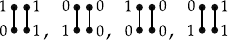}}}\;\quad \text{and}
        \quad \vcenter{\hbox{\includegraphics[scale=1]{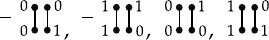}}}\;,
    \end{equation}
    respectively, where we have dropped the colours (keeping only flavours) as they are clear from context. Note that we had to take care of additional signs coming from the crossing in \eqref{eqn:019}.

    Consider the degree $\mu \in \mathbb{Z}_2$ part of the supermodule $\underline{\mathcal{C}}(1,S\widetilde{h}^\vee)$, written $\underline{\mathcal{C}}(1,S\widetilde{h}^\vee)_\mu$, which is identified with $\widehat{\mathcal{A}}(\emptyset,(S\widetilde{h}^\vee)_\mu) \cong (S\widetilde{h}^\vee)_\mu(\emptyset)$. Unpacking,
    \begin{equation*}
        (S\widetilde{h}^\vee)_\mu \cong \bigsqcup_n\left(\bigoplus_{a_1,\dots,a_n}\bigoplus_{\sum \mu_i = \mu}(\widetilde{h}_{a_1}^\vee)^{}_{\mu_1} \otimes \dots \otimes (\widetilde{h}_{a_n}^\vee)^{}_{\mu_n}\right)_{S_n},
    \end{equation*}
    where we used that colimits in $\mathcal{S}\widehat{\mathcal{A}}$ are computed degree-wise, and where $\tau \in S_n$ acts via the maps
    \begin{equation*}
        \pm \tau \cdot: (\widetilde{h}_{a_1}^\vee)^{}_{\mu_1} \otimes \dots \otimes (\widetilde{h}_{a_n}^\vee)^{}_{\mu_n} \isoto (\widetilde{h}_{a_{\tau(1)}}^\vee)^{}_{\mu_{\tau(1)}} \otimes \dots \otimes (\widetilde{h}_{a_{\tau(n)}}^\vee)^{}_{\mu_{\tau(n)}}
    \end{equation*}
    using the symmetry in $\widehat{\mathcal{A}}$, adjusted by the appropriate Koszul sign arising from rearranging the $\mu_i$'s. It follows that
    \begin{equation*}
        (S\widetilde{h}^\vee)_\mu(\emptyset) \cong \bigsqcup_n\left(\bigoplus_{a_1,\dots,a_n}\bigoplus_{\sum \mu_i = \mu}\left((\widetilde{h}_{a_1}^\vee)^{}_{\mu_1} \otimes \dots \otimes (\widetilde{h}_{a_n}^\vee)^{}_{\mu_n}\right)(\emptyset)\right)_{S_n}.
    \end{equation*}
    Now, $\widetilde{h}_a^\vee$ is the coequaliser of the two maps described in \eqref{eqn:021}. So $(\widetilde{h}_a^\vee)^{}_\mu$ is itself the coequaliser (in $\widehat{\mathcal{A}}$) of the two maps $(\pi \otimes h^\vee_a)^{}_\mu \to (h^\vee_a)^{}_\mu$ given by their degree $\mu$ parts. Since $\otimes$ preserves colimits, $(\widetilde{h}_{a_1}^\vee)^{}_{\mu_1} \otimes \dots \otimes (\widetilde{h}_{a_n}^\vee)^{}_{\mu_n}$ is the coequaliser of the two appropriate maps
    \begin{equation*}
        \bigoplus_{i}({h}_{a_1}^\vee)^{}_{\mu_1} \otimes \dots \otimes (\pi \otimes{h}_{a_i}^\vee)^{}_{\mu_i}\otimes\dots \otimes ({h}_{a_n}^\vee)^{}_{\mu_n} \longrightarrow {({h}_{a_1}^\vee)^{}_{\mu_1} \otimes \dots \otimes ({h}_{a_n}^\vee)^{}_{\mu_n}},
    \end{equation*}
    and hence $\left((\widetilde{h}_{a_1}^\vee)^{}_{\mu_1} \otimes \dots \otimes (\widetilde{h}_{a_n}^\vee)^{}_{\mu_n}\right)(\emptyset) \in \mathrm{mod}_R$ is a certain quotient of $\left(({h}_{a_1}^\vee)^{}_{\mu_1} \otimes \dots \otimes ({h}_{a_n}^\vee)^{}_{\mu_n}\right)(\emptyset)$. Given that $(h^\vee_a)^{}_\mu = \bigoplus_{\nu_1 + \nu_2 = \mu}a_{\nu_1\nu_2}$, we find that
    \begin{equation*}
        \left((\widetilde{h}_{a_1}^\vee)^{}_{\mu_1} \otimes \dots \otimes (\widetilde{h}_{a_n}^\vee)^{}_{\mu_n}\right)(\emptyset)
        \cong
        \bigoplus_{\nu_1^{(i)} + \nu_2^{(i)} = \mu_i} \mathcal{A}\left(\emptyset,\left((a_1)_{\nu^{(1)}_1\nu^{(1)}_2},\dots,(a_n)_{\nu^{(n)}_1\nu^{(n)}_2}\right)\right)\Big/ \sim
    \end{equation*}
    is the free $R$-module on the set of coloured string diagrams from $\emptyset$ to the tuple of arrows $(a_1,\dots,a_n)$ equipped with appropriate flavourings, modulo the following `local' flavour-changing relations:
    \begin{equation}\label{eqn:flavour-change}
        \vcenter{\hbox{\includegraphics[scale=1]{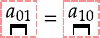}}} \quad \text{and} \quad
        \vcenter{\hbox{\includegraphics[scale=1]{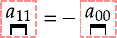}}}\;,
    \end{equation}
    where the arrow $a$ can be any one of $a_1,\dots,a_n$.

    Taking into account the $S_n$-action and putting everything together, we have
    \begin{equation*}
        \underline{\mathcal{C}}(1,S\widetilde{h}^\vee) \cong
        \left\langle
        \bigsqcup_n\bigsqcup_{a_1,\dots,a_n}\bigsqcup_{\mu_i,\nu_i}
        \mathrm{Diag}_{\overline{Q}}'\left(\emptyset,\left( (a_1)_{\mu_1\nu_1},\dots,(a_n)_{\mu_n\nu_n}\right) \right)
        \right\rangle_R \Big/ \sim,
    \end{equation*}
    the free $R$-module on the set of coloured string diagrams from $\emptyset$ to finite tuples of flavoured arrows, where a diagram $\emptyset \to \left( (a_1)_{\mu_1\nu_1},\dots,(a_n)_{\mu_n\nu_n}\right)$ has parity $\sum_i(\mu_i + \nu_i)$, modulo the relations \eqref{eqn:flavour-change} and
    \begin{equation}\label{eqn:023}
        \includegraphics[scale=1]{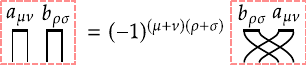}\;.
    \end{equation}
    Call flavoured arrows of the forms $a_{00}$ and $a_{11}$ \textit{even}, and of the forms $a_{01}$ and $a_{10}$ \textit{odd}. The total number of odd arrows in a diagram $\emptyset \to \left( (a_1)_{\mu_1\nu_1},\dots,(a_n)_{\mu_n\nu_n}\right)$ --- which we call its \textit{weight} --- is invariant under the relations, and $\underline{\mathcal{C}}(1,S\widetilde{h}^\vee)$ is additionally graded (as a superalgebra) by the nonnegative integer weight. The map $\mathrm{otr}$, which is necessarily zero on constant paths, sends
    \begin{equation*}
        \vcenter{\hbox{\includegraphics[scale=1]{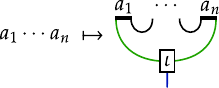}}}\;,
    \end{equation*}
    where the right hand side is merely diagrammatic \textit{notation}. The outermost strand (in green) is simply the coevalution map $1 \to c_i \otimes c_i^\vee$ for some vertex $i$, which has nonzero components
    \begin{equation*}
        \vcenter{\hbox{\includegraphics[scale=1]{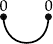}}}\quad 
        \text{and}\quad 
        \vcenter{\hbox{\includegraphics[scale=1]{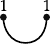}}}\;,
    \end{equation*}
    while the remaining strands are composites $\sigma \circ \mathrm{coev}$, which have nonzero components
    \begin{equation*}
        \vcenter{\hbox{\includegraphics[scale=1]{Diagrams/122.pdf}}}\quad 
        \text{and}\quad 
        \vcenter{\hbox{\includegraphics[scale=1]{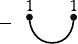}}}\;.
    \end{equation*}
    Keeping in mind the application of $\iota$, we get
    \begin{equation*}
        \vcenter{\hbox{\includegraphics[scale=1]{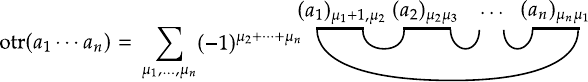}}}\;.
    \end{equation*}
    The lowest weight component of $\mathrm{otr}(a_1\cdots a_n)$ that is nonzero, $\mathrm{lw}(\mathrm{otr}(a_1\cdots a_n))$, has weight $1$ and is
    \begin{equation}\label{eqn:lowest-weight}
        \vcenter{\hbox{\includegraphics[scale=1]{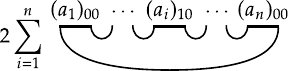}}}\;.
    \end{equation}

    Now, $S\Pi \mathcal{N} \cong \bigwedge \mathcal{N} \cong \langle \mathscr{B}\rangle_R$ is free as a module, with basis $\mathscr{B}$ a maximal collection of finite tuples of distinct cyclic paths whose underlying unordered tuples are distinct. There is then a natural decomposition $S\Pi\mathcal{N} \cong \langle \mathscr{B}'\rangle_R \oplus \langle \mathscr{B}'' \rangle_R$, where $\mathscr{B}' \subset \mathscr{B}$ consists of those tuples containing a constant path, and where $\mathscr{B}'' \subset \mathscr{B}$ consists of those tuples containing only nonconstant paths. The summand $\langle \mathscr{B}' \rangle_R$ is simply the ideal $\langle \{e_i\}_{i \in \overline{Q}_0} \rangle$, on which $\mathrm{otr}$ vanishes. Thus, it remains to show that $\mathrm{otr}$ is injective on $\langle\mathscr{B}'' \rangle_R$.
    
    By \eqref{eqn:lowest-weight}, the lowest weight of $\mathrm{otr}$ applied to a basis element $(a_1^1 \cdots a_{n_1}^1) \cdots (a_1^p \cdots a_{n_p}^p)$ is
    \begin{equation}\label{eqn:024}
        \vcenter{\hbox{\includegraphics[scale=1]{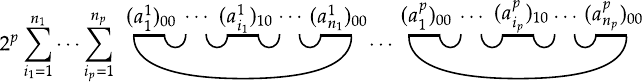}}}\;.
    \end{equation}
    By similar reasoning as in the proofs of Proposition \ref{prop:injectivity-of-tr} and Theorem \ref{thm:graded-necklace-BV-alg}, the map $\mathscr{B}'' \to \underline{\mathcal{C}}(1,S\widetilde{h}^\vee)$ sending $x \mapsto \mathrm{lw}(\mathrm{otr}(x))$ is then seen to be injective and have linearly independent image, where for this it helps to recall that we already reduced according to the relations \eqref{eqn:flavour-change} in expression \eqref{eqn:024} --- so that only the permutation action \eqref{eqn:023} needs to be considered. It follows that the map $\mathscr{B}'' \to \underline{\mathcal{C}}(1,S\widetilde{h}^\vee)$ inducing $\mathrm{otr}$ (not taking lowest weights) is itself injective with linearly independent image, and hence that $\mathrm{otr}$ is injective on $\langle \mathscr{B}''\rangle_R$.
\end{proof}

We finish this section by speculating on how one might arrange for $\mathrm{otr}$ to be injective. Recall that the construction on the representation side involves a triple $(\mathcal{C},c,\iota)$ consisting of a choice of $\otimes$-cocomplete symmetric monoidal $\Pi$-category and dualisable objects with odd involutions. Let us restrict ourselves to triples where the self-invertibility data of $\pi \in \mathcal{C}$ (the isomorphism $\pi \otimes \pi \isoto 1$) is simultaneously \textit{self-duality data}, for simplicity. For an injective $\mathrm{otr}$ map, we should try to choose the universal such triple, in which case there should not be any reason for the composites $\mathrm{otr}(e_i):\pi \to \pi \otimes c_i \otimes c_i^\vee \to c_i \otimes c_i^\vee \to 1$ to be zero. That is, we should take $\mathcal{C}$ to be the free $\otimes$-cocomplete $R$-linear symmetric monoidal category on
\begin{enumerate}
    \item A self-inverse object $\pi$, where the self-invertibility data is simultaneously self-duality data; and
    \item Dualisable objects $(c_i)_{i \in \overline{Q}_0}$ equipped with maps $(\iota_i:\pi \otimes c_i \to c_i)_{i \in \overline{Q}_0}$,
\end{enumerate}
subject to the conditions
\begin{enumerate}
    \item $\sigma_{\pi,\pi} = -\mathrm{id}$; and
    \item The composites $c_i \to \pi \otimes \pi \otimes c_i \to \pi \otimes c_i \to c_i$ are identities.
\end{enumerate}
Of course, we would need to argue that this category exists and, say, through a concrete description, show that the map $\mathrm{otr}$ is injective.

Let us sketch a slightly-more concrete description of what we expect this category to be. We will not prove it to be well-defined, nor show the expected injectivity of $\mathrm{otr}$ --- we leave this as an open question for further investigation. Nevertheless, consider finite tuples of elements of $(\overline{Q}_0 \times \{+,-\}) \sqcup \{\mathcolor{blue}{\text{blue}} = \mathcolor{blue}{\bullet}\}$, thought of as tuples of points where each point is either $\overline{Q}_0$-coloured and framed, or blue and unframed. Given two such tuples $x$ and $y$, and thinking of them as lying on the lower and upper boundaries of the unit square $[0,1]^2$, respectively, define an \textit{admissible diagram from $x$ to $y$} to consist of the following (Figure \ref{fig:admissible-diagram} gives an example and illustration):
\begin{enumerate}
    \item A (finite) set of oriented paths in $[0,1]^2$ that pairwise connect $\overline{Q}_0$-coloured points in $x \sqcup y$ of the same colour, and whose orientations respect their framings. These paths are naturally $\overline{Q}_0$-coloured.
    \item A finite set of oriented $\overline{Q}_0$-coloured loops in $[0,1]^2$.
    \item A finite set $X$ of marked points on these oriented $\overline{Q}_0$-coloured paths and loops. These points are coloured blue.
    \item A (finite) set of unoriented paths in $[0,1]^2$ which pairwise connect the blue points in $x \sqcup y\sqcup X$. These paths are declared blue.
    \item A finite set of unoriented blue loops in $[0,1]^2$.
\end{enumerate}
\begin{figure}
    \centering
    \includegraphics[scale=1]{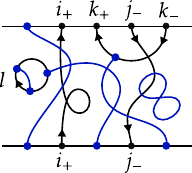}
    \caption{An admissible diagram from $(\mathcolor{blue}{\bullet},i_+,\mathcolor{blue}{\bullet},j_-,\mathcolor{blue}{\bullet})$ to $(\mathcolor{blue}{\bullet},i_+,k_+,j_-,k_-)$.}
    \label{fig:admissible-diagram}
\end{figure}

Call two admissible diagrams from $x$ to $y$ \textit{equivalent} if they are related by homotopy of oriented $\overline{Q}_0$-coloured paths and loops and \textit{regular} homotopy of unoriented blue paths and loops (where the marked points go along for the ride). We then let $\mathrm{AdDiag}_{\overline{Q}}$ be the symmetric monoidal category whose objects are these tuples and whose maps $x \to y$ are equivalence classes of admissible diagrams from $x$ to $y$, where the composition law and monoidal structure are given by vertical stacking and horizontal juxtaposition. Let $\mathcal{B}$ be the $R$-linear symmetric monoidal category obtained from $\mathrm{AdDiag}_{\overline{Q}}$ by taking free $R$-modules of its hom sets and modding out by the following local relations:
\begin{equation*}
    \vcenter{\hbox{\includegraphics[scale=1]{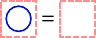}}}\;, \quad
    \vcenter{\hbox{\includegraphics[scale=1]{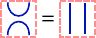}}}\;, \quad
    \vcenter{\hbox{\includegraphics[scale=1]{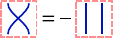}}}\;, \quad \text{and} \quad 
    \vcenter{\hbox{\includegraphics[scale=1]{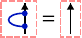}}}\;.
\end{equation*}
The first three relations are to give $\mathcal{B}$ a parity structure with $\pi = (\mathcolor{blue}{\bullet})$ (which is already self-dual), while the last then ensures that the map $\pi \otimes (i_+) = (\mathcolor{blue}{\bullet},i_+) \to (i_+)$ given by
\begin{equation}\label{eqn:025}
    \includegraphics[scale=1]{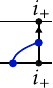}\;
\end{equation}
is an odd involution for each vertex $i$. Under the (enriched) Yoneda embedding $\mathcal{B} \hookrightarrow \widehat{\mathcal{B}}$, taking $c_i$ to be $(i_+)$ --- which is dualisable with dual $(i_-)$ --- and $\iota_i$ to be the odd involution \eqref{eqn:025}, for each $i$, we expect the triple $(\mathcal{C} = \widehat{\mathcal{B}},c,\iota)$ to be the free one so-described and yield injective $\mathrm{otr}:S\Pi\mathcal{N} \to \underline{\mathcal{C}}(1,S\widetilde{h}^\vee)$.

\newpage
\appendix
\section{}\label{Appendix A}
\begin{proof}[Proof of Proposition \ref{prop:explicit-diffop}]
    For $I \subset \{1,\dots,N\}$, denote by $X_I$ the map $a^{\otimes N} \to a$ given by
    \begin{equation*}
        \vcenter{\hbox{\includegraphics[scale=1]{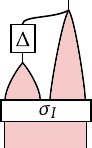}}}\;.
    \end{equation*}
    It follows from Proposition \ref{prop:diffop-condition} that for any $J \subset \{1,\dots,N\}$ with $|J| \geq k+1$ we have $\sum_{I \subset J}(-1)^{|I|}X_I = 0$. Thus,
    \begin{equation}\label{eqn:001}
        \sum_{\substack{J\subset \{1,\dots,N\}\\ |J| \geq k+1}} (-1)^{|J|}\sum_{I \subset J}(-1)^{|I|} X_I = 0.
    \end{equation}
    Reversing the order of summation, we can write the left hand side of \eqref{eqn:001} as
    \begin{equation*}
        \sum_{I  \subset\{1,\dots,N\}} c_I (-1)^{|I|}X_I,\quad\text{where}\quad
        c_I = \sum_{\substack{J \subset \{1,\dots,N\}\\|J| \geq k+1,\; J \supset I}}(-1)^{|J|}.
    \end{equation*}
    Notice that then
    \begin{equation*}
        c_I = \sum_{p=k+1}^N(-1)^p |\{J \subset \{1,\dots,N\} \; | \; |J| = p, \; J \supset I\}| =  \sum_{p=k+1}^N(-1)^p \binom{N-|I|}{N-p} = \sum_{q=0}^{N-k-1}(-1)^{N+q} \binom{N-|I|}{q},
    \end{equation*}
    where in the last equality we reindexed with $q = N-p$. We now see that there are three cases:
    \begin{enumerate}
        \item[(1)] `$|I| = N$'. We immediately see that $c_I = (-1)^N$.
        \item[(2)] `$k+1 \leq |I| < N$'. Since $N-|I| \leq N-k-1$ and $N-|I| > 0$, we have
            \begin{equation*}
                c_I = (-1)^N \sum_{q=0}^{N-|I|}(-1)^q \binom{N-|I|}{q} = 0.
            \end{equation*}
        \item[(3)] `$|I| \leq k$'. Then $N-|I| > N-k-1 \geq 0$, so
            \begin{equation*}
                c_I = (-1)^N\sum_{q=0}^{N-k-1}(-1)^{q} \binom{N-|I|}{q} = (-1)^{k+1} \binom{N-|I|-1}{N-k-1},
            \end{equation*}
            by a well-known binomial coefficients identity.
    \end{enumerate}
    Thus, the left hand side of \eqref{eqn:001} (which vanishes) is
    \begin{equation*}
        X_{\{1,\dots,N\}} + \sum_{\substack{I \subset \{1,\dots,N\}\\ |I| \leq k}}(-1)^{k+1+|I|}\binom{N-|I|-1}{N-k-1}X_I,
    \end{equation*}
    yielding the result.
\end{proof}

\begin{lem}\label{lem:for-!existence-of-diffops}
    Let $k \geq 0$, and write $\mathscr{A} = \{1,\dots,k+1\}$. For all collections of finite sets $(\mathscr{B}_i)_{i \in \mathscr{A}}$ and subsets $J \subset \mathscr{B}$ with order $|J| \leq k$,
    \begin{equation}\label{eqn:004}
        \sum_{\substack{I \subset \mathscr{A}\\ \mathscr{B}_I \supset J}}(-1)^{|I|}\binom{|\mathscr{B}_I| - |J| - 1}{k - |J|} = 0,
    \end{equation}
    where $\mathscr{B}_I := \bigsqcup_{i \in I} \mathscr{B}_i$, $\mathscr{B} := \mathscr{B}_{\mathscr{A}}$, and $\binom{-1}{\beta}$ is defined to be $(-1)^\beta$ for all $\beta \geq 0$. 
\end{lem}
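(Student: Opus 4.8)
The plan is to reduce the stated vanishing to an elementary fact about finite differences of polynomials. First I would isolate the indices of $\mathscr{A}$ that $J$ actually meets: set $S := \{i \in \mathscr{A} : \mathscr{B}_i \cap J \neq \emptyset\}$. Since the $\mathscr{B}_i$ are pairwise disjoint, each $i \in S$ contributes at least one distinct element to $J$, so $|S| \leq |J| \leq k$; in particular $S \neq \mathscr{A}$. Moreover, the condition $\mathscr{B}_I \supseteq J$ is equivalent to $I \supseteq S$, because every element of $J$ lies in a unique $\mathscr{B}_i$, and the set of indices $i$ so arising is precisely $S$.

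Writing $T := \mathscr{A} \setminus S$ and decomposing each admissible $I$ uniquely as $I = S \sqcup K$ with $K \subseteq T$, I would factor the sum. Using $|I| = |S| + |K|$ and $|\mathscr{B}_I| = |\mathscr{B}_S| + |\mathscr{B}_K|$, and setting $c := |\mathscr{B}_S| - |J| \geq 0$ (nonnegative since $J \subseteq \mathscr{B}_S$) and $d := k - |J| \geq 0$, the left side of \eqref{eqn:004} becomes $(-1)^{|S|} \sum_{K \subseteq T} (-1)^{|K|} \binom{c - 1 + |\mathscr{B}_K|}{d}$. It therefore suffices to show that the inner sum vanishes.

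The key observation is that $y \mapsto \binom{c - 1 + y}{d}$, interpreted through the generalised binomial coefficient $\binom{z}{d} = z(z-1)\cdots(z-d+1)/d!$, is a polynomial of degree $d$ in $y$; this also reconciles the stated convention, since $\binom{-1}{\beta} = (-1)^\beta$ is exactly the value of this polynomial at $z = -1$, and $\binom{\alpha}{\beta} = 0$ for integers $0 \le \alpha < \beta$ as usual (note the argument $|\mathscr{B}_I| - |J| - 1$ is always $\geq -1$). Abbreviating $b_i := |\mathscr{B}_i|$ for $i \in T$ and $g(y) := \binom{c-1+y}{d}$, I would recognise the inner sum as an iterated finite difference: with $\Delta_b := E_b - \mathrm{id}$ the shift-difference operator, expanding $\prod_{i \in T}(E_{b_i} - \mathrm{id})$ gives $\sum_{K \subseteq T}(-1)^{|K|} g\big(\sum_{i \in K} b_i\big) = (-1)^{|T|}\big(\prod_{i \in T}\Delta_{b_i}\, g\big)(0)$. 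Since each $\Delta_b$ sends polynomials of degree $\leq e$ to polynomials of degree $\leq e - 1$ (including the degenerate case $b = 0$, where $\Delta_0 = 0$), applying $|T|$ of them annihilates any polynomial of degree $< |T|$.

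To finish, I would check the degree count: $|T| = (k+1) - |S| \geq (k+1) - |J| = d + 1 > d = \deg g$, so $\prod_{i \in T}\Delta_{b_i} g = 0$ identically, yielding the desired vanishing. The only genuinely delicate points are the two bookkeeping inequalities — $|S| \leq |J|$ (which needs the disjointness of the $\mathscr{B}_i$) and the resulting strict bound $|T| > d$ — together with confirming that the convention $\binom{-1}{\beta} := (-1)^\beta$ coincides with polynomial evaluation; everything else is the standard fact that a finite difference of order exceeding the degree kills the polynomial.
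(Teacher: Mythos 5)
Your proof is correct, and it takes a genuinely different route from the paper's. The paper argues in two stages: it first proves the auxiliary identity $\sum_{I \subset \mathscr{A},\,\mathscr{B}_I \supset J}(-1)^{|I|}\binom{|\mathscr{B}_I|-|J|}{k-|J|}=0$ by inclusion--exclusion, counting the $k$-element subsets of $\mathscr{B}$ containing $J$ that are disjoint from the $\mathscr{B}_i$ with $i\in I$; it then deduces the lemma by induction on $k$, using Pascal's rule to lower the bottom index of the binomial coefficient and a three-way case analysis (with a further sub-split around a chosen element $\star \in \mathscr{B}\setminus J$) in order to invoke the auxiliary identity and the induction hypothesis. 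You avoid both the induction and the case analysis: reducing the constraint $\mathscr{B}_I \supset J$ to $I \supseteq S$ where $S=\{i \in \mathscr{A} : \mathscr{B}_i \cap J \neq \emptyset\}$ (which, as you correctly flag, uses the disjointness of the $\mathscr{B}_i$ inside $\mathscr{B}$), factoring the sum over $I = S \sqcup K$ with $K \subseteq T = \mathscr{A}\setminus S$, and identifying the residual alternating sum with $(-1)^{|T|}\bigl(\prod_{i \in T}\Delta_{b_i}\,g\bigr)(0)$, where $b_i=|\mathscr{B}_i|$ and $g(y)=\binom{c-1+y}{d}$ has degree $d = k-|J|$, reduces everything to the single fact that applying $|T| \geq d+1$ difference operators annihilates a polynomial of degree $d$. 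Your check that the paper's convention $\binom{-1}{\beta}=(-1)^\beta$ is precisely evaluation of the generalised binomial polynomial at $-1$, and that every argument occurring in the sum is an integer $\geq -1$, is exactly what makes this identification legitimate, so no step is missing. Comparing what each approach buys: the paper's proof is purely enumerative and self-contained at the level of counting subsets, but it is longer and its case analysis obscures why the hypothesis $|J|\leq k$ suffices; your proof is shorter, makes the mechanism transparent ($|J|\le k$ forces $|T| > \deg g$), and in fact proves the more general statement in which $\binom{c-1+y}{d}$ may be replaced by any polynomial in $y$ of degree less than $|T|$.
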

\begin{proof}
    We first prove the simpler identity
    \begin{equation}\label{eqn:005}
        \sum_{\substack{I \subset \mathscr{A}\\ \mathscr{B}_I \supset J}}(-1)^{|I|}\binom{|\mathscr{B}_I| - |J|}{k - |J|} = 0.
    \end{equation}
    Let $X =\{R \subset \mathscr{B} \; | \; R \supset J, |R| = k\}$, and for each $i \in \mathscr{A}$ let $X_i = \{R \in X \; | \; R \cap \mathscr{B}_i = \emptyset\}$. Note that $X = \bigcup_{i \in \mathscr{A}} X_i$, since if $R \cap \mathscr{B}_i \neq \emptyset$ for all $i$, then $|R| \geq |\mathscr{A}| = k+1$. By inclusion-exclusion we have $\sum_{I \subset \mathscr{A}}(-1)^{|I|}\left|\bigcap_{i \in I} X_i\right| = 0.$ Now,
    \begin{equation*}
    \textstyle
        \left| \bigcap_{i \in I} X_i\right| = |\{R \subset \mathscr{B} \setminus \mathscr{B}_I \; | \; R \supset J,|R|=k\}|
        =
        \begin{cases}
            0, \; \text{if $\mathscr{B}_I \cap J \neq \emptyset$}\\
            \binom{|\mathscr{B}\setminus \mathscr{B}_I| - |J|}{k - |J|}, \;\text{if $\mathscr{B}_I \cap J = \emptyset$},
        \end{cases}
    \end{equation*}
    so in fact the inclusion-exclusion result gives
    \begin{equation*}
        0 = \sum_{\substack{I \subset \mathscr{A}\\ \mathscr{B}_I \cap J = \emptyset}}(-1)^{|I|}\binom{|\mathscr{B}\setminus \mathscr{B}_I| - |J|}{k - |J|}
        = \sum_{\substack{I \subset \mathscr{A}\\ \mathscr{B}_I \supset J}}(-1)^{|I| + k + 1}\binom{|\mathscr{B}_I| - |J|}{k - |J|},
    \end{equation*}
    where we re-indexed $I \leftrightarrow \mathscr{A} \setminus I$, showing that \eqref{eqn:005} holds.
    
    We now turn our attention to the original claim, which we will prove by induction on $k$. For the base case $k=0$ we necessarily have $J = \emptyset$, so that \eqref{eqn:004} is
    \begin{equation*}
        \binom{-1}{0} - \binom{|\mathscr{B}|-1}{0} = 0,
    \end{equation*}
    which is true.
    Consider now $k \geq 1$. Notice that Pascal's identity $\binom{\alpha - 1}{\beta} = \binom{\alpha}{\beta} - \binom{\alpha - 1}{\beta - 1}$ holds for all $\alpha,\beta \geq 0$, given that $\binom{-1}{\beta} := (-1)^\beta$ for $\beta \geq 0$ (as before), and $\binom{\alpha}{-1} := 0$ for $\alpha \geq -1$. Using this together with \eqref{eqn:005}, we get
    \begin{equation}\label{eqn:006}
        \sum_{\substack{I \subset \mathscr{A}\\ \mathscr{B}_I \supset J}}(-1)^{|I|}\binom{|\mathscr{B}_I| - |J| - 1}{k - |J|} 
        =
        -\sum_{\substack{I \subset \mathscr{A}\\ \mathscr{B}_I \supset J}}(-1)^{|I|}\binom{|\mathscr{B}_I| - |J| - 1}{k - |J| -1}.
    \end{equation}
    Consider now three cases:
    \begin{enumerate}
        \item[(1)] `$|J| = k$'. It is immediate that \eqref{eqn:006} vanishes.
        \item[(2)] `$|J| \leq k-1$ and $J = \mathscr{B}$'. Then $\mathscr{B}_i = \emptyset$ for some $i$. Without loss of generality, suppose that $$\mathscr{B}_i \neq \emptyset \iff i \in \{1,\dots,n\} \subsetneq \mathscr{A},$$ for some $n \leq k$. We have $\mathscr{B}_I \supset J \iff I \supset \{1,\dots,n\}$, and so \eqref{eqn:006} becomes
        \begin{equation*}
            -\sum_{\substack{I \subset \mathscr{A}\\ I \supset \{1,\dots,n\}}}(-1)^{|I|}\binom{-1}{k - |J| -1}
            =
            \sum_{\substack{I \subset \mathscr{A}\\ I \supset \{1,\dots,n\}}}(-1)^{|I| + k + |J|}
            =
            (-1)^{|J|+1} \sum_{I \subset \{n+1,\dots,k+1\}} (-1)^{|I|}
            = 0.
        \end{equation*}
        \item[(3)] `$|J| \leq k-1$ and $J \subsetneq \mathscr{B}$'. Let $\star \in \mathscr{B}\setminus J$. Then \eqref{eqn:006} can be written as
        \begin{equation*}
            % -\sum_{\substack{I \subset \mathscr{A}\\ \mathscr{B}_I \supset J}}(-1)^{|I|}\binom{|\mathscr{B}_I| - |J| - 1}{k - |J| -1}
            % =
            -\underbrace{
            \sum_{\substack{I \subset \mathscr{A}\\ \mathscr{B}_I \supset J\sqcup \{\star\}}}(-1)^{|I|}\binom{|\mathscr{B}_I| - |J\sqcup\{\star\}|}{k - |J\sqcup\{\star\}|}
            }_{(\ast)}
            -
            \underbrace{
            \sum_{\substack{I \subset \mathscr{A}\\ \mathscr{B}_I \supset J, \,\mathscr{B}_I \notni \star}}(-1)^{|I|}\binom{|\mathscr{B}_I| - |J| - 1}{k - |J| -1}
            }_{(\dagger)}.
        \end{equation*}
        As $|J \sqcup \{\star\}| \leq k$, applying \eqref{eqn:005} (with $J\sqcup \{\star\}$ instead of $J$) shows that $(\ast)$ vanishes. As for $(\dagger)$, we consider two further cases. Suppose without loss of generality that $\star \in \mathscr{B}_{k+1}$.
        \begin{enumerate}
            \item[(i)]`$\mathscr{B}_{k+1} \cap J \neq \emptyset$'. We have $\mathscr{B}_I \supset J \implies \mathscr{B}_I \ni \star$, so $(\dagger)$ vanishes.
            \item[(ii)]`$\mathscr{B}_{k+1} \cap J = \emptyset$'. Note that $J \subset \mathscr{B}_{\{1,\dots,k\}}$. Since $\mathscr{B}_I \notni \star \iff I \subset \{1,\dots,k\}$, we have
            \begin{equation*}
                (\dagger) = \sum_{\substack{I \subset \{1,\dots,k\}\\ \mathscr{B}_I \supset J}}(-1)^{|I|}\binom{|\mathscr{B}_I| - |J| - 1}{(k-1) - |J|},
            \end{equation*}
            which shows that the proof is completed by induction.
        \end{enumerate}
    \end{enumerate}
\end{proof}
\bibliographystyle{alpha}
\bibliography{Bibliography}
\end{document}